\documentclass[reqno]{amsart}
\usepackage{amsmath,amssymb,amsthm,amsrefs, amscd, bbold}
\usepackage{xcolor}
\usepackage[english]{babel}
\usepackage[utf8]{inputenc}
\usepackage{hyperref}
\usepackage[normalem]{ulem}

\newtheorem{thm}{Theorem}[section]
\newtheorem{pro}[thm]{Proposition}
\newtheorem{lem}[thm]{Lemma}
\newtheorem{cor}[thm]{Corollary}
\theoremstyle{definition}
\newtheorem{defi}[thm]{Definition}
\newtheorem{rem}[thm]{Remark}
\newtheorem{ex}[thm]{Example}

\newcommand{\vu}{\vspace{.1cm}}

\newcommand{\1}{\mathbb{1}}
\newcommand{\due}{\underline{\overline{\#}}}
\newcommand{\et}{\varepsilon_t}
\newcommand{\es}{\varepsilon_s}

\overfullrule 1mm

\allowdisplaybreaks

\title[Matched pairs of Weak Hopf algebras]{Weak Hopf algebras arising from  weak matched pairs}

\author[Fonseca, Martini and Silva]{Graziela Fonseca, Grasiela Martini and Leonardo Silva}

\address[Fonseca]{Instituto Federal Sul-rio-grandense, Brazil.}
\email{grazielalangone@gmail.com}

\address[Martini]{IME, Universidade Federal do Rio Grande do Sul, Rio Grande do Sul, Brazil.}
\email{grasiela.martini@gmail.com}

\address[Silva]{IME, Universidade Federal do Rio Grande do Sul, Rio Grande do Sul, Brazil.}
\email{dsleonardo@ufrgs.br}

\begin{document}

	\allowdisplaybreaks
	\begin{abstract}
		This work extends the idea of matched pairs presented by Majid in \cite{Majid} and Takeuchi in \cite{Takeuchi} for the context of weak bialgebras and weak Hopf algebras.
		We introduce, also inspired by partial matched pairs \cite{matchedpair}, the notion of weak matched pairs and establish conditions for a subspace of the smash product be a weak bialgebra/Hopf algebra.
		Further, some new examples of (co)actions of weak bialgebras over algebras and some results about integral elements are presented.

	\end{abstract}

	\thanks{{\bf MSC 2020:} primary 16T99; secondary 16S40}
	
	\thanks{{\bf Key words and phrases:} weak bialgebras, weak Hopf algebras, matched pairs, smash product algebras}
	
	\thanks{The second and third author were partially supported by FAPERGS (Brazil), projects n. 23/2551-0000803-3 and 23/2551-0000913-7, respectively.}
	
	\maketitle
	
	\tableofcontents
	
	\section{Introduction}
	A matched pair of Hopf algebras was introduced in the literature by Singer, and under hypothesis of commutativity and cocommutativity, a new Hopf algebra was obtained \cite{Singer}.
	Later, Takeuchi also dealt with matched pairs and generalized the previous construction, and obtained a new Hopf algebra which was called bismash product \cite{Takeuchi}.
	After these works, matched pairs became an interesting way to construct new algebraic structures. For instance, Majid presented several constructions and results that generated new structures of bialgebras/Hopf algebras, by replacing the commutativity and cocommutativity requested by Takeuchi for others compatibility conditions, and described a huge class of noncommutative and noncocommutative Hopf algebras, which have been called bicrossed product of Hopf algebras (for instance, see \cite{Majid}).
	Since then, the idea of matched pairs was generalized and carry on to several contexts, such as Lie algebras, groupoids and others (for instance, see \cites{Agore, Nicolas_intro, muller, Sonia_Natale, zhang}).
	
	\vu
	
	In this work, we investigate the construction of weak Hopf algebras through matched pairs.
	It is folkloric that (global) actions and coactions of weak Hopf algebras resemble partial actions and coactions of (usual) Hopf algebras \cite{Batista_why}, so that much of we do in this paper is inspired by \emph{partial matched pairs}, particularly in \cite{matchedpair}.
	
	\vu
	
	The present work is divided as follows. In Section 2, the concepts of weak bialgebras and weak Hopf algebras plus some useful properties, actions and coactions, and some examples, will be briefly recalled. 
	Such examples will appear throughout the work.
	In Section 3, we introduce the definition of \emph{weak matched pair} as a pair $(H, A)$ of two weak bialgebras plus some compatibility conditions (see Definition \ref{novo weak matched pair}).
	It generalizes the classical matched pairs defined by Majid and by Takeuchi (the latter when the bialgebra is a Hopf algebra).
	But, different from there, in the weak scenario the product smash $A\# H$ is not a (unital) algebra or a (counital) coalgebra.
	Then, in Section 4, we deal with this product smash and investigate conditions in the weak matched pair to provide a structure of weak bialgebra/Hopf algebra in a subspace $A \due H$ of $ A \# H$.
	At last, in Section 5, we present some weak Hopf algebras generated by weak matched pairs, and some results about integral elements.
	
	\vu
	
	Throughout, algebraic structures (vector spaces, algebras...) will be all considered over a fixed field $\Bbbk$. Unadorned $\otimes$ means $\otimes_{\Bbbk}$.
	
	\section{Preliminaries}\label{sec_weak_preliminares}
	
	\subsection{Weak Hopf algebras}
	
	We recall some classical definitions and results about weak Hopf algebras. For details and proofs we refer \cite{Gabintegral}.
	
	\begin{defi}\label{def_weakbialgebra}
		A \textit{weak bialgebra} $H$ is a vector space such that $ (H, m, u) $ is an algebra, $(H,\Delta,\varepsilon)$ is a coalgebra, and the following conditions are satisfied for all $h,k \in H$:
		\begin{enumerate}
			\item[(i)] $\Delta(hk) = \Delta(h)\Delta(k)$,
			\item[(ii)] $\varepsilon(hk\ell) = \varepsilon(hk_1)\varepsilon(k_2\ell) = \varepsilon(hk_2)\varepsilon(k_1\ell)$,
			\item[(iii)] $(1_H \otimes \Delta(1_H))(\Delta(1_H) \otimes 1_H) = (\Delta(1_H) \otimes 1_H)(1_H \otimes \Delta(1_H)) = \Delta^2(1_H)$.
		\end{enumerate}
	\end{defi}

	Notice that in the last item we use $\Delta(1_H)= (1_{H})_1 \otimes (1_{H})_2 = 1_{H1} \otimes 1_{H2}$ two times, so it means $$1_{H1} \otimes 1'_{H1} 1_{H2} \otimes 1'_{H2} = 1_{H1} \otimes 1_{H2} 1'_{H1} \otimes 1'_{H2} = 1_{H1} \otimes 1_{H2} \otimes 1_{H3}.$$
	
	It is possible to define the following two linear maps
	\begin{eqnarray*}
		\varepsilon_t: H &\rightarrow& H \ \ \ \ \ \ \  \ \ \ \ \mbox{ and} \ \  \ \ 	\varepsilon_s: H \rightarrow H \\
		h &\mapsto& \varepsilon(1_1h)1_2 \ \ \ \ \ \ \ \ \ \ \ \  \ \ \ \  \ \ \	h \mapsto 1_1\varepsilon(h1_2)
	\end{eqnarray*}
	and so the vector spaces $H_t= \varepsilon_t(H)$ and $H_s= \varepsilon_s(H)$.
	Thus, for any weak bialgebra $H$, every element $ h \in H $ can be written as
	\begin{eqnarray*}
		h = (\varepsilon \otimes id) \Delta(h) 
		= (\varepsilon \otimes id) \Delta(1_H h) 
		= \varepsilon_t(h_1)h_2,\\
		h = (id \otimes \varepsilon) \Delta(h) 
		= (id \otimes \varepsilon) \Delta(h1_H) 
		= h_1 \varepsilon_s(h_2). 
	\end{eqnarray*}
	
	\begin{pro}
		Let $H$ be a weak bialgebra. Then, the following properties hold for all $h, k \in H$:
		\begin{eqnarray}
			\varepsilon(h\varepsilon_t(k)) &=& \varepsilon(hk)  \label{4.5} \\
			\varepsilon(\varepsilon_s(h)k) &=& \varepsilon(hk)  \label{4.6} \\
			\Delta(1_H) &\in& H_s \otimes H_t \label{4.7} \\
			h_1 \otimes \varepsilon_t(h_2) &=& 1_1h \otimes 1_2  \label{4.12}\\
			\varepsilon_s(h_1) \otimes h_2 &=& 1_1 \otimes h1_2 \label{4.13} \\
			h\varepsilon_t(k) &=& \varepsilon(h_1k)h_2  \label{4.14} \\
			\varepsilon_s(h)k &=& k_1\varepsilon(hk_2)  \label{4.15}\\
			\varepsilon_t(\varepsilon_t(h)k) &=& \varepsilon_t(h)\varepsilon_t(k)  \label{4.19} \\
			\varepsilon_s(h\varepsilon_s(k)) &=& \varepsilon_s(h)\varepsilon_s(k)  \label{4.20}
		\end{eqnarray}
		Furthermore, if $H$ is a commutative weak bialgebra, then both $\varepsilon_t$ and $\varepsilon_s$ are multiplicative maps.
	\end{pro}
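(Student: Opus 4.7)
The plan is to prove the identities in the order (4.5)/(4.6), (4.12)/(4.13), (4.7), (4.14)/(4.15), (4.19)/(4.20), and finally the multiplicativity claim, since each group feeds the next. Throughout, I will freely use coassociativity, the counit axioms, (i)--(iii) of Definition~\ref{def_weakbialgebra}, and the two decompositions $h=\varepsilon_t(h_1)h_2=h_1\varepsilon_s(h_2)$ already derived in the excerpt.

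For (4.5), I would unfold $\varepsilon_t(k)=\varepsilon(1_1k)1_2$ on the left to obtain $\varepsilon(h\varepsilon_t(k))=\varepsilon(1_1k)\,\varepsilon(h1_2)$, and then apply axiom (ii) in the reverse direction to $\varepsilon(h\cdot 1_H\cdot k)=\varepsilon(h1_2)\varepsilon(1_1k)$ to collapse it to $\varepsilon(hk)$; (4.6) is completely symmetric using $\varepsilon_s$. Next, for (4.12), I would compute $h_1\otimes\varepsilon_t(h_2)=h_1\otimes\varepsilon(1_1h_2)1_2$, pass the scalar, and use the coproduct identity $\Delta(h)=\Delta(1_Hh)=1_1h_1\otimes 1_2h_2$ together with axiom (ii) to rewrite $\varepsilon(1_1h_2)h_1\otimes 1_2$ as $1_1h\otimes 1_2$; one can alternatively verify the equality after applying $(\mathrm{id}\otimes\Delta)$ and using axiom (iii). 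Identity (4.13) is handled mirror-symmetrically.

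Identity (4.7) then drops out of (4.12) and (4.13) evaluated at $h=1_H$: (4.12) gives $1_1\otimes\varepsilon_t(1_2)=1_1\otimes 1_2$, so after pairing with $\varepsilon$ on the first leg we conclude $1_2\in H_t$, and symmetrically $1_1\in H_s$. For (4.14), I would unfold $\varepsilon_t(k)$ and use axiom (ii) in the form $\varepsilon(h_1\cdot 1_1\cdot k)=\varepsilon(h_11_1)\varepsilon(\cdot)$ combined with the counit axiom $\varepsilon(h_11_1)h_2 1_2=h$-style rearrangement; equivalently, the cleanest route is to apply $(\mathrm{id}\otimes\varepsilon_t)\circ\Delta$ followed by (4.12), multiplying the tensor factors through $m$ on both sides. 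Identity (4.15) is dual.

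Identities (4.19) and (4.20) follow by iteration: expand $\varepsilon_t(\varepsilon_t(h)k)=\varepsilon(1_1\varepsilon_t(h)k)1_2$, and invoke (4.5) to replace $1_1\varepsilon_t(h)$ inside $\varepsilon$ by an expression allowing a second application of the defining formula, after which the two $1$'s combine via axiom (iii) into $\varepsilon_t(h)\varepsilon_t(k)$; again (4.20) is symmetric. Finally, for the commutative case: by (4.19) it suffices to show $\varepsilon_t(hk)=\varepsilon_t(\varepsilon_t(h)k)$, and commutativity lets us write $\varepsilon_t(\varepsilon_t(h)k)=\varepsilon_t(k\varepsilon_t(h))=\varepsilon(1_1k\varepsilon_t(h))1_2$, which by (4.5) applied inside equals $\varepsilon(1_1kh)1_2=\varepsilon_t(kh)=\varepsilon_t(hk)$; $\varepsilon_s$ is analogous. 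The main bookkeeping obstacle will be distinguishing the several copies $1_1\otimes 1_2$, $1'_1\otimes 1'_2$ and $1''_1\otimes 1''_2$ that appear once axiom (iii) is used more than once, and making sure each application of axiom (ii) is taken in the correct direction; the underlying ideas are short, but one must be careful with indices.
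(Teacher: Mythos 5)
The paper itself offers no proof of this proposition --- it is recalled as a standard result with all proofs deferred to \cite{Gabintegral} --- so there is no in-paper argument to compare against; your outline follows the usual textbook strategy. Your treatments of \eqref{4.5} and \eqref{4.6}, of \eqref{4.7} as a consequence of \eqref{4.12}--\eqref{4.13} evaluated at $h=1_H$ (the right conclusion is $\Delta(1_H)=(\varepsilon_s\otimes\varepsilon_t)\Delta(1_H)$, rather than anything obtained by pairing with $\varepsilon$), and of the multiplicativity claim in the commutative case are correct, granting \eqref{4.19}: commutativity is exactly what lets you slide $\varepsilon_t(h)$ into the rightmost slot so that \eqref{4.5} becomes applicable.

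However, three intermediate steps would not go through as written. First, for \eqref{4.12} the substitution $h_1\otimes h_2=1_1h_1\otimes 1_2h_2$ followed by axiom (ii) leaves you with a factor $\varepsilon(1_1 1'_2 h_2)$ in which the middle term $1'_2$ must itself be comultiplied, so $\Delta^2(1_H)$ and hence axiom (iii) enter unavoidably; it is the main route, not merely an ``alternative'' check. Second, your ``cleanest route'' to \eqref{4.14} --- multiplying the two legs of \eqref{4.12} through $m$ --- only yields the one-variable identity $h_1\varepsilon_t(h_2)=1_1h1_2$, not the two-variable statement; a correct short derivation is $h\varepsilon_t(k)=\varepsilon(1_1k)\,h1_2\stackrel{\eqref{4.13}}{=}\varepsilon(\varepsilon_s(h_1)k)\,h_2\stackrel{\eqref{4.6}}{=}\varepsilon(h_1k)\,h_2$, and dually for \eqref{4.15}. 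Third, and most seriously, in the general (non-commutative) case you cannot ``invoke \eqref{4.5}'' on $\varepsilon(1_1\varepsilon_t(h)k)$: identity \eqref{4.5} only removes an $\varepsilon_t$ occupying the \emph{rightmost} position of the product inside $\varepsilon$, and there is no way to move $\varepsilon_t(h)$ past $k$ without commutativity. The standard repair is to first establish $\Delta(\varepsilon_t(h))=1_1\varepsilon_t(h)\otimes 1_2$ (from \eqref{4.12} together with \eqref{4.7}) and then apply \eqref{4.14} with $h$ replaced by $\varepsilon_t(h)$, giving $\varepsilon_t(h)\varepsilon_t(k)=\varepsilon(1_1\varepsilon_t(h)k)1_2=\varepsilon_t(\varepsilon_t(h)k)$; identity \eqref{4.20} is dual. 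As it stands, the proposed argument for \eqref{4.19}--\eqref{4.20} is a genuine gap, which also propagates into your final multiplicativity claim since that claim quotes \eqref{4.19}.
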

	
	\begin{pro}[\cite{Caenepeel_}] \label{equivalencia_doum}Let $ H $ be a weak bialgebra. Then, for all $h\in H$, 
		\begin{itemize}
			\item[(i)] $\Delta^2(1_H)=1_{H1} \otimes 1'_{H1} 1_{H2} \otimes 1'_{H2}$ if only if $h_1\otimes\varepsilon_t(h_2)=1_{H1}h\otimes 1_{H2}$,
			
			\item[(ii)] $\Delta^2(1_H)=1_{H1} \otimes 1_{H2} 1'_{H1} \otimes 1'_{H2}$ if only if $h_1\otimes\varepsilon_s'(h_2)=h1_{H1}\otimes 1_{H2}$, where $\varepsilon_s'(h)=\varepsilon(h1_{H1})1_{H2}$.
		\end{itemize}
	\end{pro}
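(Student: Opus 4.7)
The statement is an equivalence between a ``weakening'' of axiom (iii) and a useful convolution-style identity involving $\varepsilon_t$ (resp.\ $\varepsilon_s'$). My plan is to prove (i) in detail; (ii) will follow by a symmetric argument with $\varepsilon_s'$ replacing $\varepsilon_t$ and mirror-image multiplications. The key guiding principle is that both sides of the 2-fold identity $h_1\otimes\varepsilon_t(h_2)=1_{H1}h\otimes 1_{H2}$ can be obtained from natural 3-fold identities in $H^{\otimes 3}$ by applying $(id\otimes\varepsilon\otimes id)$ and invoking the counit axiom. So the strategy is to move up to $H^{\otimes 3}$, exploit multiplicativity of $\Delta$ and coassociativity there, and then descend.

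For the direction $(\Rightarrow)$, I would take the assumption
$$\Delta^2(1_H)\;=\;1_{H1}\otimes 1_{H2}\otimes 1_{H3}\;=\;1_{H1}\otimes 1'_{H1}1_{H2}\otimes 1'_{H2}$$
and multiply on the right by $\Delta(h)\otimes 1_H$. On the left this collapses (via multiplicativity of $\Delta$) to $\Delta(1_{H1}h)\otimes 1_{H2}$, while on the right it becomes $1_{H1}h_1\otimes 1'_{H1}1_{H2}h_2\otimes 1'_{H2}$. Now apply $id\otimes\varepsilon\otimes id$: the counit axiom turns the left side into $1_{H1}h\otimes 1_{H2}$, and the definition of $\varepsilon_t$ turns the right side into $1_{H1}h_1\otimes\varepsilon_t(1_{H2}h_2)$. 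Finally, using $\Delta(h)=\Delta(1_H)\Delta(h)$ (axiom (i)) gives $h_1\otimes\varepsilon_t(h_2)=1_{H1}h_1\otimes\varepsilon_t(1_{H2}h_2)$, and combining the two equalities yields exactly $h_1\otimes\varepsilon_t(h_2)=1_{H1}h\otimes 1_{H2}$.

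For the direction $(\Leftarrow)$, I would apply $\Delta\otimes id$ to the assumed identity. Using coassociativity and multiplicativity of $\Delta$ on the right-hand side, this gives
$$h_1\otimes h_2\otimes \varepsilon_t(h_3)\;=\;1_{H11}h_1\otimes 1_{H12}h_2\otimes 1_{H2}\;=\;\Delta^2(1_H)\,(\Delta(h)\otimes 1_H).$$
On the left I reapply the assumption to $h_2$ in place of $h$, which rewrites $h_2\otimes\varepsilon_t(h_3)$ as $1_{H1}h_2\otimes 1_{H2}$, so the left side becomes $(1_H\otimes\Delta(1_H))(\Delta(h)\otimes 1_H)$. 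Setting $h=1_H$ and using that $\Delta^2(1_H)(\Delta(1_H)\otimes 1_H)=\Delta^2(1_H)$ (which is immediate from $(\Delta\otimes id)\circ\Delta$ being an algebra map and $\Delta(1_H)^2=\Delta(1_H)$), I read off $\Delta^2(1_H)=(1_H\otimes\Delta(1_H))(\Delta(1_H)\otimes 1_H)=1_{H1}\otimes 1'_{H1}1_{H2}\otimes 1'_{H2}$, as required.

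The main obstacle is purely bookkeeping: one must keep distinct copies of $\Delta(1_H)$ disambiguated throughout, and one must recognize at the right moment that an expression such as $1_{H11}h_1\otimes 1_{H12}h_2\otimes 1_{H2}$ equals $\Delta^2(1_H)(\Delta(h)\otimes 1_H)$ so that the assumption can be inserted. Part (ii) is proved by the strictly analogous argument, multiplying instead by $1_H\otimes\Delta(h)$ on the right and using the definition $\varepsilon_s'(h)=\varepsilon(h1_{H1})1_{H2}$ in place of $\varepsilon_t$.
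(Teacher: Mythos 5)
The paper itself gives no proof of this proposition --- it is quoted from Caenepeel--De Groot --- so your argument stands on its own. Your proof of part (i) is correct and complete in both directions: multiplying $\Delta^2(1_H)$ on the right by $\Delta(h)\otimes 1_H$, applying $id\otimes\varepsilon\otimes id$, and using $\Delta(h)=\Delta(1_H)\Delta(h)$ gives the forward implication cleanly, and the converse via $\Delta\otimes id$, reapplication of the hypothesis to $h_2$, and the idempotence identity $\Delta^2(1_H)(\Delta(1_H)\otimes 1_H)=\Delta^2(1_H)$ (which indeed follows from $(\Delta\otimes id)$ being multiplicative and $\Delta(1_H)^2=\Delta(1_H)$) is sound.

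The one concrete flaw is your description of the mirror for part (ii): multiplying $\Delta^2(1_H)=(\Delta(1_H)\otimes 1_H)(1_H\otimes\Delta(1_H))$ on the right by $1_H\otimes\Delta(h)$ and projecting does \emph{not} produce $\varepsilon_s'$. Indeed, the right-hand side becomes $1_{H1}\otimes 1_{H2}1'_{H1}h_1\otimes 1'_{H2}h_2$, and applying $id\otimes\varepsilon\otimes id$ yields $1_{H1}\otimes\varepsilon(1_{H2}1'_{H1}h_1)1'_{H2}h_2$, where the primed unit legs sit to the \emph{left} of the argument and the last leg is contaminated by $h_2$; this matches neither $\varepsilon_s'(k)=\varepsilon(k1_{H1})1_{H2}$ nor $\varepsilon_s$. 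The correct mirror is to multiply on the \emph{left} by $\Delta(h)\otimes 1_H$: then the left side collapses to $\Delta(h1_{H1})\otimes 1_{H2}$, the right side becomes $h_1 1_{H1}\otimes h_2 1_{H2}1'_{H1}\otimes 1'_{H2}$, and $id\otimes\varepsilon\otimes id$ turns its last two legs into exactly $\varepsilon_s'(h_21_{H2})$, after which the argument proceeds as in (i) using $\Delta(h)=\Delta(h)\Delta(1_H)$. With that correction the symmetric argument for (ii) goes through.
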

	
	Consider $ H $ a weak bialgebra. We say that $ H $ is a \textit{weak Hopf algebra} if there is a linear map $ S: H\longrightarrow H$, called \textit{antipode}, such that:
	\begin{enumerate}
		\item [(i)] $h_1S(h_2)=\varepsilon_t(h)$,
		\item [(ii)] $S(h_1)h_2=\varepsilon_s(h)$,
		\item [(iii)] $S(h_1)h_2S(h_3)=S(h),$
	\end{enumerate}
	for all $h \in H$.
	The antipode of a weak Hopf algebra satisfies  $ S (hk) = S (k) S (h) $ 	 and $ S (h) _1 \otimes S (h) _2 = S (h_2) \otimes S (h_1) $, 	for all $h,k \in H$.
	
	\begin{pro}
		Let $H$ be a weak Hopf algebra. Then, the following identities hold for all $h \in H$:
		\begin{eqnarray}
			\varepsilon_t \circ S &=& \varepsilon_t \circ \varepsilon_s = S \circ \varepsilon_s \label{4.34}\\
			\varepsilon_s \circ S &=& \varepsilon_s \circ \varepsilon_t = S \circ \varepsilon_t \label{4.35}\\
			h_1 \otimes S(h_2)h_3 &=& h1_1 \otimes S(1_2) \  \label{4.38}\\
			h_1S(h_2) \otimes h_3 &=& S(1_1) \otimes 1_2h \  \label{4.39}
		\end{eqnarray}
	\end{pro}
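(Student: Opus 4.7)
My plan is to prove each of the four identities by direct Sweedler-notation manipulations, using the antipode formulas $\varepsilon_t(h)=h_1S(h_2)$ and $\varepsilon_s(h)=S(h_1)h_2$ (from axioms (i)-(ii)) together with the previously established relations.

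Each of (4.34) and (4.35) splits into two equalities, which I would handle separately. The outer ones $\varepsilon_t\circ S=S\circ\varepsilon_s$ and $\varepsilon_s\circ S=S\circ\varepsilon_t$ should follow immediately from $\Delta(S(h))=S(h_2)\otimes S(h_1)$ and the antimultiplicativity $S(ab)=S(b)S(a)$: for instance,
\[
\varepsilon_t(S(h))=S(h)_1S(S(h)_2)=S(h_2)S(S(h_1))=S(S(h_1)h_2)=S(\varepsilon_s(h)).
\]
For the inner equalities, I would first reduce them to the statements $\varepsilon_t|_{H_s}=S|_{H_s}$ and $\varepsilon_s|_{H_t}=S|_{H_t}$. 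To prove the former, I plan to derive the characterization $\Delta(x)=1_1\otimes 1_2\,x$ for $x\in H_s$ by writing $x=\varepsilon_s(y)$, applying $\Delta$, and using coassociativity together with the weak bialgebra axiom $\Delta^2(1_H)=1_1\otimes 1_21'_1\otimes 1'_2$. Then
\[
\varepsilon_t(x)=x_1S(x_2)=1_1S(1_2x)=1_1S(x)S(1_2)=S(x),
\]
invoking $1_1S(1_2)=\varepsilon_t(1_H)=1_H$ and the commutation of $H_s$ with $H_t$ (a standard consequence of (4.14) applied to elements of these subspaces). The dual argument handles (4.35).

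For (4.38) and (4.39), after rewriting the left-hand sides as $h_1\otimes\varepsilon_s(h_2)$ and $\varepsilon_t(h_1)\otimes h_2$ via axioms (ii) and (i), the plan is to expand $\Delta^2(h)=\Delta^2(1_H)\Delta^2(h)$ using $\Delta^2(1_H)=1_1\otimes 1_2\otimes 1_3$ (from coassociativity applied to $\Delta(1_H)$), apply the appropriate combination $id\otimes m(S\otimes id)$ or $m(id\otimes S)\otimes id$ to the resulting three-fold tensor, and collapse via the antipode-unit cancellations $S(1_1)1_2=1_H$ and $1_1S(1_2)=1_H$ together with the $H_s$-$H_t$ commutation to land on the target right-hand sides $h1_1\otimes S(1_2)$ and $S(1_1)\otimes 1_2h$.

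The hard part will be the notational bookkeeping: multiple independent copies of $\Delta(1_H)$ appear in the same Sweedler expression, and pairing them correctly via axiom (iii) of Definition~\ref{def_weakbialgebra} is the crux of the argument. The structural input $\Delta(1_H)\in H_s\otimes H_t$ from (4.7), combined with the antipode's interaction with $H_s$ and $H_t$, is what ensures that all the cancellations succeed.
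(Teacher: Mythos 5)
The paper offers no proof of this proposition: it is recalled in the preliminaries with the blanket reference to \cite{Gabintegral}, so your argument can only be measured against the standard one there — which, in outline, it follows. The outer equalities of \eqref{4.34} and \eqref{4.35} are handled correctly, and the reduction of the inner ones to $\varepsilon_t|_{H_s}=S|_{H_s}$ and $\varepsilon_s|_{H_t}=S|_{H_t}$ via the characterization $\Delta(x)=1_1\otimes 1_2\,x$ for $x\in H_s$ is exactly right.

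The gap is in the last step of that reduction. From $\varepsilon_t(x)=x_1S(x_2)=1_1S(1_2x)=1_1S(x)S(1_2)$ you propose to conclude $=S(x)$ by commuting $S(x)$ past $1_1$ (or past $S(1_2)$) and then cancelling $1_1S(1_2)=\varepsilon_t(1_H)=1_H$. But to commute $S(x)$ with $1_1\in H_s$ you need to know $S(x)\in H_t$, and $S(H_s)\subseteq H_t$ is precisely part of the identity \eqref{4.34} you are proving; as stated the argument is circular. The repair is to use the third antipode axiom instead of commutation: from $\Delta(x)=1_1\otimes 1_2x$ one gets $\Delta^2(x)=1_1\otimes 1_2\otimes 1_3x$, hence
\begin{equation*}
S(x)=S(x_1)x_2S(x_3)=S(1_1)\,1_2\,S(1_3x)=\varepsilon_s(1_1)S(1_2x)=1_1S(1_2x)=x_1S(x_2)=\varepsilon_t(x),
\end{equation*}
where the middle steps use the grouping $S(1_1)1_2\otimes 1_3=\varepsilon_s(1_1)\otimes 1_2$ and then \eqref{4.13} at $h=1_H$; the dual computation gives $\varepsilon_s(z)=S(z)$ for $z\in H_t$. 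A similar caveat applies to \eqref{4.38}--\eqref{4.39}: after the correct rewriting $h_1\otimes S(h_2)h_3=h_1\otimes\varepsilon_s(h_2)$, the ``collapse'' you describe is not automatic. What is actually needed is the identity $h_1\otimes\varepsilon_s(h_2)=h1_1\otimes\varepsilon_s(1_2)$, which follows from Proposition \ref{equivalencia_doum}(ii) together with $\varepsilon_s\circ\varepsilon_s'=\varepsilon_s$; only then does \eqref{4.35}, applied to $1_2\in H_t$, convert $\varepsilon_s(1_2)$ into $S(1_2)$. In particular \eqref{4.38} and \eqref{4.39} logically depend on \eqref{4.34}--\eqref{4.35}, a dependence your sketch should make explicit.
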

	
	It is  straightforward to see that every Hopf algebra is a weak Hopf algebra. Conversely, we have the following result.
	\begin{pro}\label{condicoesdealgebradehopf}
		A weak Hopf algebra is a Hopf algebra if one of the following equivalent conditions is satisfied:
		\begin{itemize}
			\item [(i)] $\Delta(1_H)=1_H \otimes 1_H,$
			\item[(ii)] $\varepsilon(hk) = \varepsilon(h)\varepsilon(k),$
			\item [(iii)]$h_1S(h_2)=\varepsilon(h)1_H,$
			\item [(iv)]$S(h_1)h_2 = \varepsilon(h)1_H,$
			\item [(v)]$H_t=H_s=\Bbbk 1_H,$
		\end{itemize}
		for all $h, k \in H$.	
	\end{pro}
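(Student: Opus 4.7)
The plan is to use condition (i) as a hub, proving its equivalence with each of (ii)--(v); together these supply the remaining Hopf algebra axioms beyond those of a weak Hopf algebra, namely the unital compatibility $\Delta(1_H) = 1_H \otimes 1_H$ (from (i)), the multiplicativity of $\varepsilon$ (from (ii)), and the strong antipode identities (from (iii), (iv)).

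I would start with (i) $\Leftrightarrow$ (v). The direction (i) $\Rightarrow$ (v) is immediate: assuming $\Delta(1_H)=1_H \otimes 1_H$, the definitions unwind to $\et(h)=\varepsilon(h)1_H$ and $\es(h)=\varepsilon(h)1_H$, so $H_t = H_s = \Bbbk 1_H$. For the converse, and in order to handle (ii)--(iv) uniformly, I would prove the slightly stronger statement that the \emph{one-sided} hypothesis $H_t \subseteq \Bbbk 1_H$ already forces $\Delta(1_H) = 1_H \otimes 1_H$. Indeed, by (\ref{4.7}) we have $\Delta(1_H) \in H_s \otimes H_t$, so $\Delta(1_H) = x \otimes 1_H$ for some $x \in H_s$; applying $\mathrm{id} \otimes \varepsilon$ yields $x\,\varepsilon(1_H) = 1_H$, which forces $x = \varepsilon(1_H)^{-1} 1_H \in \Bbbk 1_H$. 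Then the idempotency $\Delta(1_H)^2 = \Delta(1_H\cdot 1_H) = \Delta(1_H)$ (from multiplicativity of $\Delta$) forces $\varepsilon(1_H) = 1$, and hence $\Delta(1_H) = 1_H \otimes 1_H$. A symmetric argument works from $H_s \subseteq \Bbbk 1_H$.

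Equipped with this observation, each of (ii), (iii), (iv) implies (i) quickly: (ii) combined with the counit identity $\varepsilon(1_1)1_2 = 1_H$ collapses $\et(h)$ to $\varepsilon(h)1_H$, so $H_t \subseteq \Bbbk 1_H$; (iii) combined with the antipode axiom $h_1 S(h_2) = \et(h)$ gives the same conclusion; and (iv) dually yields $H_s \subseteq \Bbbk 1_H$. The forward implications out of (i) are short computations: once (v) is known, (iii) and (iv) follow directly from the antipode axioms, and (ii) follows from identity (\ref{4.5}) via $\varepsilon(hk) = \varepsilon(h\,\et(k)) = \varepsilon(h)\varepsilon(k)$.

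The main delicacy I anticipate is the converse direction of (i) $\Leftrightarrow$ (v): pinning down the scalar $\varepsilon(1_H)^{-1}$ and ruling out degenerate cases requires a careful combination of (\ref{4.7}) with the idempotency of $\Delta(1_H)$. Once that step is handled, the rest is routine bookkeeping with the identities already listed in the preliminaries.
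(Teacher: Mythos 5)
Your argument is correct, and the one genuinely delicate step --- that the one-sided hypothesis $H_t\subseteq\Bbbk 1_H$ already forces $\Delta(1_H)=1_H\otimes 1_H$, via $\Delta(1_H)\in H_s\otimes H_t$ from (\ref{4.7}) together with the idempotency of $\Delta(1_H)$ --- is handled properly, including the normalization $\varepsilon(1_H)=1$. Note that the paper itself states this proposition without proof (it is recalled from the literature on weak Hopf algebras), so there is no in-paper argument to compare against; your hub-and-spoke reduction of (ii)--(v) to condition (i) through the target/source subalgebras is the standard route and is complete.
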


	\begin{ex}[Groupoid Algebra] \label{algebradegrupoide} Let $\mathcal{G}$ be a groupoid such that the cardinality of $\mathcal{G}_{0}$ is finite. Consider $ \Bbbk \mathcal{G}$ the vector space with basis $\lbrace \delta_g \rbrace_{g\in \mathcal{G}}$, indexed by the elements of $\mathcal{G}$. Then, $\Bbbk \mathcal{G}$ is a weak Hopf algebra with the following structures
		\begin{eqnarray*}
			u(1_{\Bbbk})=1_\mathcal{G} = \sum_{e \in \mathcal{G}_{0}} \delta_e, 
			\ \  \ \ \ \ 
			m(\delta_g\otimes \delta_h)=\left\{
			\begin{array}{rl}
				\delta_{gh}, & \text{if $\exists gh$,}\\
				0, & \text{otherwise,}
			\end{array} \right. \\
			\Delta(\delta_g)=\delta_g\otimes \delta_g ,\ \ \ \ \ \ \ \ \ \ \ \ \ \ \ \  \ 
			\varepsilon(\delta_g)=1_{\Bbbk}, \ \ \ \ \ \ \ \ \ \ \ \ \ \ \ \  \  S(\delta_g)=\delta_{g^{-1}}.
		\end{eqnarray*}	
		
		\label{ex_grupoide}
	\end{ex}
	
	Another important example of a weak Hopf algebra is the one based on \cite{Bohmexemplo}*{Ex. 8}.
	It appeared in \cite{Ricardo}, where the group algebra is a (general) finite abelian group instead of a finite cyclic group, as in the previous reference. 
	
	\begin{ex}\label{exemplodogrupo}
		Consider $G$ a finite abelian group with cardinality $|G|$, where $|G|$  is not a multiple of the characteristic of $\Bbbk$. Denote by $\mathcal{H}^G$ the weak Hopf algebra where $\mathcal{H}^G = \Bbbk G$ as algebra, and the coalgebra structure is given by
		\begin{eqnarray*}			
			\Delta(g)=\frac{1}{|G|}\sum_{h \in {G}} gh  \otimes h^{-1}, \ \ \ \ \varepsilon(g)=\left\{
			\begin{array}{rl}
				|G|, & \text{if $g=1_G$,}\\
				0, & \text{otherwise. }
			\end{array} \right.
		\end{eqnarray*}	
		Then, $\mathcal{H}^G$ is a weak Hopf algebra with antipode defined by $S(g)=g$.
		Also, for all $g \in G$, $\varepsilon_s(g)=\varepsilon_t(g)=g$, which implies that $\left(\mathcal{H}^G\right)_s=\left(\mathcal{H}^G\right)_t=\mathcal{H}^G$.
		Furthermore, since $G$ is abelian, $\mathcal{H}^G$ is a commutative and cocommutative weak Hopf algebra.
	\end{ex}

	\begin{ex}\label{exuniaodisjunta}
		The finite disjoint union of Hopf algebras is a weak Hopf algebra.  
		Indeed, if $(H_i, m_i, u_i, \Delta_i, \varepsilon_i, S_i)$ is a weak Hopf algebra, for each $i \in \{ 1, 2,.., n\}$, with $1_{{H}_{i}}=u_i(1_\Bbbk)$ unity of $H_i$, then
		$H={{\mathring{\cup}}_{i=1}}^n H_i$
		is a weak Hopf algebra, where 
		\begin{eqnarray*}
			1_H= \sum_{i=1}^n 1_{H_i}, \ \ \	hk=\left\{
			\begin{array}{rl}
				m_j(h\otimes k), & \text{if $h,k \in H_j$,}\\
				0, & \text{otherwise,}
			\end{array} \right.\\
		\end{eqnarray*}	
		and as usual, for $h \in H_j$, the maps $\Delta, \varepsilon$ and $S$ are defined as $\Delta(h)=\Delta_j(h),$ 
		$\varepsilon(h)=\varepsilon_j(h)$ and $S(h)=S_j(h)$.
	\end{ex}
	
	\begin{ex}[\cite{Kaplansky}] \label{ex_kapla}Let $H$ be a Hopf algebra and $\mathbb{e}$ its unity.
		We consider the set $H'$ as a result of adjoining a new unity $\mathbb{1}$ to $H$, with respect to the multiplication.
		Define
		\begin{align*}
			\Delta(\1) =  (\1-\mathbb{e})\otimes (1-\mathbb{e})+ \mathbb{e}\otimes \mathbb{e},\ \ \ \ \ \ 
			\varepsilon(\1) =  2,\ \ \ \ \ \
			S(\1) = & \1.
		\end{align*}
		Then $H'$ becomes a weak Hopf algebra.
	\end{ex}
	
	\subsection{(Co)Action of a weak bialgebra over an algebra}

	\begin{defi}[\cite{Caenepeel_}]
		Let $H$ be a weak bialgebra and $A$ an algebra. If there exists a linear map
		\begin{eqnarray*}
			\cdot: H \otimes A & \longrightarrow & A\\
			h \otimes a & \longmapsto & h\cdot a
		\end{eqnarray*}
		such that, for all $h,k\in H$ and $a,b\in A$, the following conditions are satisfied:
		\begin{enumerate}
			\item [(MA1)] $1_H \cdot a=a$,
			\item [(MA2)] $h \cdot ab=(h_1 \cdot a)(h_2 \cdot b)$,
			\item [(MA3)] $h\cdot(k \cdot a)=hk\cdot a$,
			\item[(MA4)] $h \cdot 1_A = \varepsilon_t(h) \cdot 1_A$,
		\end{enumerate}
		then we say that $A$ is a \textit{(left) $H$-module algebra}.
		The map $\cdot$ is called a \emph{(left) action of $H$ on $A$}.
	\end{defi}
	
	\begin{rem}[\cite{Felipe}]
		If $H$ is a weak Hopf algebra, then conditions (MA1)-(MA3) imply (MA4). 
	\end{rem}
	
	\begin{ex}\label{exquedacertoacao}
		$\mathcal{H}^G$ is an $\mathcal{H}^G$-module algebra via its multiplication.
	\end{ex}
	
	There exists a particular case that we are interested: actions of a weak Hopf algebra $H$ over an algebra $A$ via scalars, as presented in the next example.
	\begin{ex}[\cite{Felipe}]\label{ex_lambda_geral}
		Consider $H$ a weak Hopf algebra, $A$ an algebra and a linear map $\lambda : H \longrightarrow \Bbbk$, such that $h \cdot a = \lambda(h)a$. The linear map $\lambda$ defines an action of $H$ on $A$ if and only if it satisfies
		\begin{eqnarray*}
			\lambda(1_H) & = & 1_\Bbbk, \\
			\lambda (h) & = & \lambda (h_1) \lambda (h_2), \\
			\lambda (h)\lambda (k) & = & \lambda (hk),
		\end{eqnarray*}
		for all $h, k \in H$.  
		
		In particular, $\lambda$ defines an action of $H$ on the base field $\Bbbk$ via $h \cdot 1_\Bbbk = \lambda(h)$.
	\end{ex}

	\begin{ex}
		Consider $n$ groups $G_1,...,G_n$ and the groupoid given by the disjoint union of them, that is, $\mathcal{G} = \mathring{\cup}_{i=1}^{n} G_i.$
		Then, $\lambda$ is an action of the groupoid algebra $\Bbbk\mathcal{G}$ on its base field, if and only if, $\lambda = \lambda_\ell$, where 
		\begin{eqnarray*}
			\lambda_\ell(\delta_g)=\left\{
			\begin{array}{rl}
				1, & \text{if $g \in G_\ell$,}\\
				0, & \text{otherwise.}
			\end{array} \right.
		\end{eqnarray*}
	\end{ex}
	
	Although the above example is elementary, it can be proved that a connected groupoid algebra acts on the base field via the functional $\lambda$ if and only if the groupoid is a group.
	Moreover, in that case, the map $\lambda$ is the counit map.
	Hence, the previous example is the only possible for groupoids.
	
	Since a group algebra is a Hopf algebra, the next example generalizes the previous one.
	\begin{ex}
		Let $H = \mathring{\cup}_{i=1}^{n} H_i$ be the weak Hopf algebra given as the finite disjoint union of Hopf algebras (see Example \ref{exuniaodisjunta}).
		Then, $\lambda$ is an action of the 
		$H$ on its base field, if and only if, $\lambda = \varepsilon_\ell$, where $\varepsilon_\ell$ is the counit of $H _\ell$, for some $\ell \in \{1, ..., n\}$.
	\end{ex}
	
	\begin{ex} Let $G$ be a finite group and consider the weak Hopf algebra $\mathcal{H}^G$ (see Example \ref{exemplodogrupo}).
		Since, the compatibility $\lambda(g)=\lambda(g_1)\lambda(g_2)$ holds for all morphism of algebras $\lambda: \mathcal{H}^G \longrightarrow \Bbbk$, we get that $\lambda$ is an action (see Example \ref{ex_lambda_geral}) if and only if $\lambda$ is a character.
		
		In particular, considering $C_n = \langle g \ | \ g^n =1 \rangle$ the cyclic group of order $n$, every action $\lambda$ of $\mathcal{H}^{C_n}$ on its base field is determined by $\lambda = \lambda_q$, where $\lambda_q (g^i) =q^i$, for $q$ an $nth$ root of unity.
	\end{ex}

	\begin{ex}\label{Ex_acao_esthopf}
		Let $H$ be a Hopf algebra, $H'$ the weak Hopf algebra of Example \ref{ex_kapla} and $A$ an algebra.
		If $A$ is an $H$-module algebra, then there is a unique structure of $H'$-module algebra on $A$ that extends it, and it is given in the usual way.
	\end{ex}
	
	The following lemma is \cite{Felipe}*{Lem. 2.18} and it was proved in the context of a \emph{partial $H$-module algebra} (for the definition, see \cite{Felipe}).
	Since every $H$-module algebra is a partial $H$-module algebra in the usual way, we will be able to use it.
	\begin{lem}\label{lema_2_1_9_Glauber}
		Let $A$ be a partial $H$-module algebra, $a, b \in A$ and $h \in H$. Then:
		\begin{itemize}
			\item[(i)] if $h \in H_t$, then $(h \cdot a)b = h \cdot ab$,
			\item[(ii)] if $h \in H_s$, then $a(h \cdot b) = h \cdot ab$.
		\end{itemize}
	\end{lem}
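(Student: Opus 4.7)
My plan is to prove both items by the same ``insert $1_H$'' strategy: use (MA1) to pre-apply a trivial action, expand with (MA2) and (MA3) until the factor $h$ is absorbed into $\Delta(1_H)$, and then invoke the weak-bialgebra identities \eqref{4.12} and \eqref{4.13} to recognise the result as $\Delta(h)$ applied term-wise.

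Concretely for (i), I would write
\begin{equation*}
(h\cdot a)\,b \;=\; 1_H\cdot\bigl((h\cdot a)\,b\bigr) \;=\; \bigl(1_{H1}\cdot(h\cdot a)\bigr)(1_{H2}\cdot b) \;=\; \bigl((1_{H1}h)\cdot a\bigr)(1_{H2}\cdot b),
\end{equation*}
applying (MA1), (MA2), (MA3) in succession. Identity \eqref{4.12} rewrites $1_{H1}h\otimes 1_{H2}$ as $h_1\otimes\varepsilon_t(h_2)$, and since $h\in H_t$ forces $h_2\in H_t$, one has $\varepsilon_t(h_2)=h_2$, so the displayed expression becomes $(h_1\cdot a)(h_2\cdot b)=h\cdot(ab)$ by a final use of (MA2). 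Part (ii) runs entirely by mirror symmetry: after the analogous unfolding
\begin{equation*}
a\,(h\cdot b) \;=\; (1_{H1}\cdot a)\bigl((1_{H2}h)\cdot b\bigr),
\end{equation*}
the formula $\Delta(h)=1_{H1}\otimes 1_{H2}h$ valid for $h\in H_s$ (equivalently, \eqref{4.13} together with $h_1\in H_s$) identifies $1_{H1}\otimes 1_{H2}h$ with $h_1\otimes h_2$, and (MA2) delivers $h\cdot(ab)$.

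The main obstacle is justifying the inclusions $\Delta(H_t)\subseteq H\otimes H_t$ and $\Delta(H_s)\subseteq H_s\otimes H$, or equivalently the sharper formulas $\Delta(x)=x\,1_{H1}\otimes 1_{H2}$ for $x\in H_t$ and $\Delta(y)=1_{H1}\otimes 1_{H2}y$ for $y\in H_s$, which are not among the listed identities in Section~2 but are elementary consequences of the axioms. For the $H_t$ case, writing $x=\varepsilon_t(k)=\varepsilon(1_{H1}k)1_{H2}$ and expanding $\Delta(1_{H2})$ via the axiom $\Delta^2(1_H)=1_{H1}\otimes 1_{H2}1'_{H1}\otimes 1'_{H2}$ gives $\Delta(x)=\varepsilon_t(k)\,1'_{H1}\otimes 1'_{H2}\in H\otimes H_t$ by \eqref{4.7}; the $H_s$ case uses the other form of the $\Delta^2(1_H)$ axiom and is entirely parallel. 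Once these are in place, the argument reduces, as sketched, to a direct chain of (MA1)--(MA3) applications combined with \eqref{4.12} and \eqref{4.13}.
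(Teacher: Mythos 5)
The paper offers no proof of this lemma to compare against: it is quoted from \cite{Felipe} (Lem.\ 2.18) and used as a black box, with the remark that every $H$-module algebra is in particular a partial one. Judged on its own, your argument is a correct and self-contained proof \emph{for a genuine (global) $H$-module algebra}: the unfolding $(h\cdot a)b=((1_{H1}h)\cdot a)(1_{H2}\cdot b)$ via (MA1)--(MA3), the use of \eqref{4.12} together with the idempotency of $\varepsilon_t$ on $H_t$, and the auxiliary formulas $\Delta(x)=x1_{H1}\otimes 1_{H2}$ for $x\in H_t$ and $\Delta(y)=1_{H1}\otimes 1_{H2}y$ for $y\in H_s$, each read off from one of the two orderings in axiom (iii) of Definition \ref{def_weakbialgebra} together with \eqref{4.7}, all check out and deliver the two claimed identities.

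The gap is that the statement is about a \emph{partial} $H$-module algebra, and your proof silently invokes the global axiom (MA3) in the step $1_{H1}\cdot(h\cdot a)=(1_{H1}h)\cdot a$. For a partial action this axiom is weakened to $k\cdot(h\cdot a)=(k_1\cdot 1_A)(k_2h\cdot a)$, so that step instead yields $(1_{H1}\cdot 1_A)(1_{H2}h\cdot a)(1_{H3}\cdot b)$, and the spurious factor $1_{H1}\cdot 1_A$ (which is not $1_A$ for a properly partial action) does not cancel; the argument stalls there, and the mirror step in (ii) fails the same way. So what you have proved is the special case that the paper actually uses downstream, but not the lemma as stated; covering the partial case requires working with the partial axioms directly (in particular with the elements $h\cdot 1_A$), as is done in \cite{Felipe}. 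If you intend to keep your argument, either restrict the hypothesis to $H$-module algebras or add the extra bookkeeping needed to absorb the factors $h_1\cdot 1_A$.
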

	
	The following definition is dual to the $H$-module algebra structure:
	
	\begin{defi}[\cite{Wang}]
		Consider $H$ a weak bialgebra and $C$ a coalgebra.
		We call $C$ a \textit{(right) $H$-comodule coalgebra}  when  there exits a linear map 
		\begin{eqnarray*}
			\rho: C & \longrightarrow & C\otimes H\\
			c & \longmapsto & c^{0} \otimes c^{1} 
		\end{eqnarray*}
		such that, for all $c\in C$, the following conditions hold:
		\begin{enumerate}
			\item [(CC1)] $\varepsilon(c^{1})c^{0}=c$,
			\item [(CC2)] ${c^{0}}_{1} \otimes {c^{0}}_{2}\otimes c^{1}  =  {c_{1}}^{0} \otimes {c_{2}}^{0}\otimes {c_{1}}^{1}{c_{2}}^{1}$,
			\item [(CC3)]$  c^{00}\otimes c^{01} \otimes c^{1}=  c^{0}\otimes {c^{1}}_{1} \otimes {c^{1}}_{2}$,
			\item[(CC4)] $c^{1}\varepsilon(c^{0})=\varepsilon_s(c^{1})\varepsilon(c^{0})$.
		\end{enumerate}
		The map $\rho$ is called a \emph{(right) coaction of $H$ on $C$}.
	\end{defi}
	
	\begin{rem} \cite{weak_smash_coproduct}*{Prop. 3.1}
		Let $H$ be a weak Hopf algebra. If the map $\rho: C \longrightarrow C \otimes H$ satisfies conditions (CC1)-(CC3), then condition (CC4) is also satisfied.
	\end{rem}

	\begin{ex}\label{exquedacertocoacao}
		Consider the weak Hopf algebra $\mathcal{H}^G$ (see Example \ref{exemplodogrupo}). Then $\mathcal{H}^G$ is an $\mathcal{H}^G$-comodule coalgebra via its comultiplication map $\Delta$.
	\end{ex}
	
	Now, for a coalgebra $C$ and an element $z \in H$, we denote by $\rho_z^C$ the linear map $\rho_z^C: C \longrightarrow C \otimes H$ given by $\rho_z^C(c) = c \otimes z$, for all $c \in C$. As in the case of actions, we are particularly interested in coactions of a weak Hopf algebra $H$ over a coalgebra $C$ through a coaction of $H$ on the base field $\Bbbk$ as follows.
	
	\begin{ex}[\cite{weak_smash_coproduct}]\label{ex_z_geral}
		Consider $H$ a weak Hopf algebra and $C$ a coalgebra. Then, $C$ is a $H$-comodule coalgebra via $\rho_z^C$ if and only if $z \in H$ satisfies
		\begin{eqnarray*}
			\varepsilon (z) & = & 1_\Bbbk, \\
			z & = & z^2, \\
			\Delta(z) & = & z \otimes z.
		\end{eqnarray*}
		In particular, the element $z \in H$ defines a coaction of $H$ on $C$ via $\rho^C_z$ if and only if $\rho^\Bbbk_z$ is a coaction of $H$ on the base field $\Bbbk$.
		
	\end{ex}
	
	\begin{ex}
		Let $\mathcal{G} = \mathring{\cup}_{i=1}^{n} G_i$ be the disjoint union of the groups $G_1,...,G_n$.
		Then, $z$ is a coaction of the groupoid algebra $\Bbbk\mathcal{G}$ on its base field, if and only if, $z=\delta_{e_\ell}$, where $e_\ell$ is the identity element of $G_\ell$, for some $\ell \in \{1, ..., n\}$.	
	\end{ex}
	
	\begin{ex}
		Let $H = \mathring{\cup}_{i=1}^{n} H_i$ be the disjoint union of Hopf algebras $H_1,...,H_n$.
		Then, $z$ is a coaction of the $H$ on its base field, if and only if, $z=1_\ell$, where $1_\ell$ is the unity of $H _\ell$, for some $\ell \in \{1, ..., n\}$.
	\end{ex}

	\begin{ex}
		Consider the weak Hopf algebra $\mathcal{H}^{G}$ (see Example \ref{exemplodogrupo}).
		Every coaction of $\mathcal{H}^{G}$ on its base field is determined by $z = \sum_{g \in G} \alpha_g g$, where $(\alpha_g)_{g \in G} \in \Bbbk^{|G|}$ satisfies $\alpha_e=1/|G|$ and $\alpha_g \alpha_h = \frac{1}{|G|} \alpha_{gh},$ for all $g, h \in G$.
		Moreover, it is possible to conclude that such $z \in \mathcal{H}^G$ thus determined exists if and only if $\beta: G \longrightarrow \Bbbk$ is a character of $G$, where $\beta(g)=|G|\alpha_g$.
		
		In particular, considering $C_n$ the cyclic group of order $n$, every coaction $z$ of $\mathcal{H}^{C_n}$ on its base field is determined by $z_q = \frac{1}{n} \sum_{i=1}^n q^i g^i$, where $q$ is an $nth$ root of unity
	\end{ex}
	
	\begin{ex}\label{Ex_coacao_esthopf}
		Let $H$ be a Hopf algebra, $H'$ the weak Hopf algebra of Example \ref{ex_kapla} and $C$ a coalgebra.
		If $C$ is an $H$-comodule algebra, then there is a unique structure of $H'$-comodule algebra on $C$ that extends it, and it is given in the usual way.
	\end{ex}
	
	\section{Weak  matched pair}\label{sec_weak_matched_pair}
	
	In this section we introduce the definition of a matched pair of weak bialgebras.
	It generalizes the classical definitions of matched pairs of bialgebras and Hopf algebras \cites{Majid, Takeuchi}, and also was inspired in the definition of a \emph{partial matched pair} of Hopf algebras \cite{matchedpair}.
	
	\begin{defi}[\cite{Majid}]\label{matched pair} A pair of bialgebra $(H,A)$ is called a \emph{matched pair} if
		\begin{itemize}
			\item[(i)] $A$ is a left $H$-module algebra with action given by $h\triangleright x$,
			\item[(ii)] $H$ is a right $A$-comodule coalgebra with structure $\rho:H\longrightarrow H\otimes A$, denoted by $\rho(h)=h^0\otimes h^1$,
			\item[(iii)] $\rho(hg)=\rho(h_1)(g^0\otimes (h_2\triangleright g^1))$,
			\item[(iv)] $\Delta(h\triangleright x)=({h_1}^0\triangleright x_1)\otimes ({h_1}^1(h_2\triangleright x_2)) $,
			\item[(v)] $\varepsilon(h\triangleright x)=\varepsilon(h)\varepsilon(x)$,
			\item[(vi)] $\rho(1_H)=1_H\otimes 1_A$,
		\end{itemize}
		for all $h,g\in H$ and $x\in A$. The matched pair $(H,A)$ is called \emph{abelian} if $H$ is cocommutative
		and $A$ is commutative.
	\end{defi}
	
	Takeuchi, in \cite{Takeuchi}, showed that when $H$ and $A$ are Hopf algebras, the items (v) and (vi) are automatically satisfied.
	Now, the following definition extends the previous one to a weak bialgebra context.
	
	\begin{defi}\label{novo weak matched pair} A pair of weak bialgebras $(H,A)$ is called a \textit{weak matched pair} if
		\begin{enumerate}
			\item[(i)] $A$ is a left $H$-module algebra via $\cdot$,
			\item[(ii)] $H$ is a right $A$-comodule coalgebra via $\rho$, which is denoted by $\rho(h)= h^0 \otimes h^1$,
			\item[(iii)] $(\tau_{A,H}\otimes m_A)(id\otimes \tau_{A,H}\otimes id)(\Delta_A(h_1\cdot x)\otimes \rho(h_2g))$\\ \vspace{-0.4cm}
			\begin{flushright}
				=\ ${h_3}^0 g^0\otimes ({h_1}^0\cdot x_1)\otimes {h_1}^1(h_2\cdot x_2){h_3}^1(h_4\cdot g^1),$
			\end{flushright}
			\item[(iv)] $\varepsilon_s(h\cdot x)=\varepsilon_s(h\cdot 1_A)\varepsilon_s(x)$,
			\item [(v)] $\rho(1_{H1})\otimes 1_{H2}=1_{H1}\otimes \varepsilon({1_{H1}'}^0){1_{H1}'}^1\otimes 1_{H2} 1_{H2}'$,	
		\end{enumerate}
		for all $h,g\in H$ and $x\in A$, where $\tau_{A,H}: A\otimes H\longrightarrow H\otimes A$ is the twist map.
		Also, we call \emph{abelian weak matched pair} when $H$ is cocommutative and $A$ is commutative.
	\end{defi}

	\begin{rem}\label{troca}
		Item (iii) of the above definition can be rewritten as follows:
		\begin{equation}\label{igualdade_iii}
			\begin{split}
				& (h_2 g)^0\otimes (h_1\cdot x)_1\otimes (h_1\cdot x)_2 (h_2 g)^1 \\ 
				& = {h_3}^0 g^0
				\otimes ({h_1}^0\cdot x_1)\otimes  {h_1}^1(h_2\cdot x_2){h_3}^1(h_4\cdot g^1),
			\end{split}	
		\end{equation}
		for all $h,g\in H$ and $x\in A$.
	\end{rem}
	
	\begin{rem}\label{vart_acao}
		If $A$ is a left $H$-module algebra via $\cdot$ such that the item \ref{novo weak matched pair}(iv) is satisfied with $A$ commutative, then $\varepsilon_t(h\cdot x)=\varepsilon_t(h\cdot 1_A)\varepsilon_t(x)$, for all $x\in A$ and $h\in H$. Indeed, 
		\begin{equation*}
			\begin{split}
				\varepsilon_t(h\cdot x) & = \varepsilon(1_{A1}(h\cdot x))1_{A2}\\
				&\stackrel{(\ref{4.6})}{=}  \varepsilon(1_{A1}\varepsilon_s(h\cdot x))1_{A2}\\
				&  \stackrel{\ref{novo weak matched pair}(iv)}{=} \varepsilon(1_{A1}\varepsilon_s(h\cdot 1_A)\varepsilon_s(x))1_{A2}\\
				& =  \varepsilon_t\left((h\cdot 1_A)x\right)\\
				& =  \varepsilon_t\left(h\cdot 1_A\right) \varepsilon_t\left(x\right), 
			\end{split}
		\end{equation*}
		where, in the last equality, we use that $\varepsilon_t$ is a multiplicative map since $A$ is commutative.
	\end{rem}

	\begin{pro}
		Let $H$ and $A$ be bialgebras. If $(H,A)$ is a matched pair, then $(H,A)$ is a weak matched pair.
	\end{pro}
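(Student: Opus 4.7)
My plan is to verify each of the five axioms of Definition \ref{novo weak matched pair} in turn, exploiting at every step the fact that in an ordinary bialgebra one has $\Delta(1)=1\otimes 1$, so that $\varepsilon_t(a)=\varepsilon_s(a)=\varepsilon(a)1$ for every element $a$.

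Axioms (i) and (ii) require no argument, since they are literally items (i) and (ii) of Definition \ref{matched pair}. So the real work is in (iii), (iv) and (v).

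For axiom (iii), I would work with the equivalent identity given in Remark \ref{troca}. Starting from the left-hand side $(h_2 g)^0\otimes (h_1\cdot x)_1\otimes (h_1\cdot x)_2 (h_2 g)^1$, the strategy is first to apply classical (iv) to $\Delta(h_1\cdot x)$, expanding it as $(h_1{}^0\cdot x_1)\otimes h_1{}^1(h_2\cdot x_2)$ after a careful reindexing of Sweedler components, and then to apply classical (iii) to $\rho(h_3 g)$, expanding it as $h_3{}^0 g^0\otimes h_3{}^1(h_4\cdot g^1)$. With the four Sweedler components arising from $\Delta^3(h)=h_1\otimes h_2\otimes h_3\otimes h_4$ lined up correctly, this reproduces exactly the right-hand side of \eqref{igualdade_iii}.

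For axiom (iv), since $A$ is an ordinary bialgebra one has $\varepsilon_s(a)=\varepsilon(a)1_A$ for every $a\in A$. Combined with classical (v) this gives $\varepsilon_s(h\cdot x)=\varepsilon(h)\varepsilon(x)1_A$, and the parallel computation for the right-hand side yields $\varepsilon_s(h\cdot 1_A)\varepsilon_s(x)=\varepsilon(h\cdot 1_A)\varepsilon(x)1_A=\varepsilon(h)\varepsilon(x)1_A$, using $\varepsilon(1_A)=1_\Bbbk$. For axiom (v), classical (vi) gives $\rho(1_H)=1_H\otimes 1_A$ and $\Delta(1_H)=1_H\otimes 1_H$, so both sides of (v) collapse to $1_H\otimes 1_A\otimes 1_H$. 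The only genuinely computational step is (iii), and even there the difficulty is purely organizational bookkeeping of Sweedler indices rather than algebra.
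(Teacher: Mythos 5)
Your proposal is correct and follows essentially the same route as the paper: the paper's proof likewise reduces everything to the observations that $\varepsilon_s(x)=\varepsilon(x)1_A$ and $\Delta(1_H)=1_H\otimes 1_H$ in the ordinary bialgebra setting, and explicitly notes that items (iii) and (iv) of Definition \ref{matched pair} together yield item (iii) of Definition \ref{novo weak matched pair}, which is exactly your Sweedler-bookkeeping step. Your write-up simply spells out the details that the paper leaves as "follows directly."
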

	
	\begin{proof}
		This follows directly from Definitions \ref{matched pair} and \ref{novo weak matched pair}, using that $\varepsilon_s(x)=\varepsilon(x)1_A$, for all $x\in A$ and $\Delta(1_H)=1_H\otimes 1_H$. Note that, items (iii) and (iv) of Definition \ref{matched pair} imply the item (iii) of Definition \ref{novo weak matched pair}.
	\end{proof}
	
	Next, we present examples of weak matched pairs $(H,A)$.
	For that, it is routine computation to verify   items (i)-(v) of Definition \ref{novo weak matched pair}.
	In particular, for these examples, we use the structure of (left) $H$-module algebra on $A$ and (right) $A$-comodule coalgebra on $H$ 
	as presented in the Section \ref{sec_weak_preliminares}.
	Thus, we only need to check items (iii), (iv) and (v), where usually one uses Remark \ref{troca} for the first.
	
	\begin{ex} Consider the weak Hopf algebra $\mathcal{H}^G$ and the action and coaction on itself as in Examples  \ref{exquedacertoacao} and \ref{exquedacertocoacao}, respectively.
		By Remark \ref{troca}, both sides of item (iii) means 
		\begin{equation*}
			\frac{1}{|G|^3} \sum_{u, v, t}  g t u^{-1} \otimes h x u v \otimes v^{-1} t^{-1}.
		\end{equation*}
		Notice that to get it in the right hand of equality \eqref{igualdade_iii}, one uses several times that $\sum_{a,b,c \in G} a \otimes bc \otimes c^{-1}b^{-1} = |G| \sum_{a,b \in G} a \otimes b \otimes b^{-1}$.
		
		Item (iv) is clear since $\mathcal{H}^G$ is commutative, and so it implies that $\varepsilon_s$ is a multiplicative map.
		At last, for item (v), 
		\begin{equation*}
			\rho(1_{H1})\otimes 1_{H2}= \frac{1}{|G|^2}\sum_{g,h \in G} gh \otimes h^{-1}\otimes g^{-1}= 1_{H1}\otimes \varepsilon({1_{H1}'}^0){1_{H1}'}^1\otimes 1_{H2} 1_{H2}'.
		\end{equation*}
		Hence, \label{exemplo_par_combinado_HG}
		$(\mathcal{H}^G, \mathcal{H}^G)$ is an abelian weak matched pair.
	\end{ex}
	
	\begin{ex}\label{exemplo_par_combinado_lambda_z}
		Assume that $H$ acts on $A$ via $\lambda \in H^*$ (see Example \ref{ex_lambda_geral}) and $A$ coacts on $H$ via $z \in A$ (see Example \ref{ex_z_geral}).
		Then, $(H,A)$ is a weak matched pair if and only if $\lambda(h_1) h_2 \lambda(h_3)=\lambda(h_1) h_2$ and $zxz=xz$, for all $h \in H$ and $x \in A$.
		Indeed, items (iv) and (v) are clear.
		Then, for item (iii), on the one hand
		\begin{equation*}
			(h_2 g)^0\otimes (h_1\cdot x)_1\otimes (h_1\cdot x)_2 (h_2 g)^1 = \lambda(h_1)  h_2 g \otimes x_1 \otimes x_2 z,
		\end{equation*}
		and, on the other hand,
		\begin{equation*}
			\begin{split}
				&{h_3}^0 g^0 \otimes ({h_1}^0\cdot x_1)\otimes {h_1}^1(h_2\cdot x_2){h_3}^1(h_4\cdot g^1) \\
				&= {h_3}^0 g^0 \otimes \lambda({h_1}^0) x_1 \otimes {h_1}^1 \lambda(h_2) x_2 {h_3}^1 \lambda(h_4) g^1\\
				&= h_3 g \otimes \lambda(h_1) x_1 \otimes z \lambda(h_2) x_2 z \lambda(h_4) z \\
				&= \lambda(h_1) \lambda(h_2) h_3 \lambda(h_4) g \otimes x_1 \otimes z x_2 z^2 \\
				&= \lambda(h_1) h_2 \lambda(h_3) g \otimes x_1 \otimes z x_2 z,
			\end{split}
		\end{equation*}
		where we use that $\lambda(k_1)\lambda(k_2)=\lambda(k)$ and $z^2=z$ in the last equality.
		
		Hence, item (iii) holds if and only if $\lambda(h_1) h_2 \lambda(h_3)=\lambda(h_1) h_2$ and $zxz=xz$, for all $h \in H$ and $x \in A$.
		
		In particular, if $H$ is cocommutative and $A$ is commutative, then these conditions hold.
		In that case, $(H,A)$ is an abelian matched pair.
	\end{ex}
	
	\begin{ex}\label{ex_kap2}
		Let $H$ and $A$ be Hopf algebras, and $H'$ and $A'$ the weak Hopf algebras as in Example \ref{ex_kapla}. If $(H,A)$ is a matched pair, then $(H',A')$ is a weak matched pair.
		Indeed, just define
		$\1_{H'}\cdot x=x$, for all $x\in A'$, $h\cdot\1_{A'}=\varepsilon(h)\1_{A'},$ for all $h\in H$ and $$\rho(\1_{H'})=(\1_{H'}-\mathbb{e}_{H'})\otimes (\1_{A'}- \mathbb{e}_{A'})+ \mathbb{e}_{H'}\otimes \mathbb{e}_{A'}.$$ 
		Then, it is only routine computation to verify that all items of Definition \ref{novo weak matched pair} hold.
	\end{ex}
	
	\medskip
	
	Let $B$ and $L$ be weak bialgebras.
	Similar to Definition \ref{novo weak matched pair}, if $B$ is a right $L$-module algebra and $L$ is a left $B$-comodule coalgebra, one can define a weak matched pair $(B,L)$.
	
	Consider $(H,A)$ a weak matched pair (as in Definition \ref{novo weak matched pair}).
	Assume that $H$ and $A$ are finite dimensional weak bialgebras.
	Since $H$ is a right $A$-comodule coalgebra, then $H$ is a left $A^*$-module coalgebra via $f \triangleright h = f(h^1)h^0$ and, furthermore, $H^*$ is a right $A^*$-module algebra via $(\phi \leftharpoonup f)(h)=f(h^1)\phi(h^0)$, for all $f \in A^*$, $\phi\in A^*$ and $h \in H$. Also, since $A$ is a left $H$-module algebra, then $A^*$ is a right $H^{**}$-module coalgebra via $(f \leftharpoondown \widehat{h})(x)=\widehat{x}(f \leftharpoondown \widehat{h})=f(h \rightharpoonup x)$, for all $x \in A$, $f\in A^*$ and $\widehat{h}=h^{**} \in H^{**}$. Furthermore, $A^*$ is a left $H^*$-comodule coalgebra via $\rho(f)=\sum_{i=1}^{n} {h_i}^* \otimes f \leftharpoondown \widehat{h_i}$, where $\{{h_i}^*\}_{i=1}^{n}$ is the dual basis in $H^*$ (see, \cites{weak_smash_coproduct, Caenepeel_}).
	
	Thus, we have  the following result.
	\begin{pro}
		Let $H$ and $A$ be finite dimensional weak bialgebras. If $(H,A)$ is a weak matched pair, where $A$ is a left $H$-module algebra and $H$ is a right $A$-comodule coalgebra, then $(A^{\ast},H^{\ast})$ is a weak matched pair, where $H^*$ is a right $A^*$-module algebra and $A^*$ is a left $H^*$-comodule coalgebra.
	\end{pro}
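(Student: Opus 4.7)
The strategy is to exploit the finite-dimensional dualities $H\cong H^{**}$, $A\cong A^{**}$ and transport each of the five axioms of Definition \ref{novo weak matched pair} from $(H,A)$ to the ``right-left'' version on $(A^*,H^*)$. Items (i) and (ii) are essentially given in the discussion preceding the statement: the right $A^*$-module algebra structure on $H^*$ is precisely the dualization of the coaction $\rho\colon H\to H\otimes A$, while the left $H^*$-comodule coalgebra structure on $A^*$ is obtained by first dualizing the left $H$-action on $A$ to a right $H$-module coalgebra structure on $A^*$, then transporting along $H\cong H^{**}$ and converting into a coaction via a dual basis $\{h_i^*\}$ of $H^*$. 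Hence it suffices to check items (iii), (iv) and (v) for $(A^*,H^*)$.

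For items (iv) and (v) the verification is relatively short. Both reduce, after evaluation against test elements of $H$ or $A$, to scalar identities that are the primal axioms in disguise. One recalls that on a dual weak bialgebra the source and target maps translate into expressions involving $\varepsilon$ composed with multiplication by $1_A$ or $1_H$. Axiom (iv) for $(A^*,H^*)$ thus reads $\varepsilon_s(\phi\leftharpoonup f)=\varepsilon_s(\varepsilon_{H^*}\leftharpoonup f)\,\varepsilon_s(\phi)$; evaluating on a generic $h\in H$, one expands the $\leftharpoonup$ using its definition and invokes item (iv) of $(H,A)$ together with the coalgebra axioms. Axiom (v) is verified analogously by pairing with two copies of $H\otimes A$ and invoking item (v) of $(H,A)$ combined with the dual-basis identity $\sum_i h_i^*(k)\,h_i=k$.

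The main obstacle is item (iii), the weak Majid--Takeuchi compatibility, whose explicit form \eqref{igualdade_iii} involves four tensor factors mixing $m$, $\Delta$, the action $\cdot$ and the coaction $\rho$ simultaneously. The plan is to write the analogue of \eqref{igualdade_iii} for $(A^*,H^*)$, pair it with an arbitrary element of $H\otimes H\otimes A\otimes A$, expand the right action $\leftharpoonup$ and the left coaction of $H^*$ on $A^*$ in terms of the primal data, and then collapse the resulting expression by invoking Remark \ref{troca} for $(H,A)$ together with the module algebra and comodule coalgebra axioms. No algebraic ingredient beyond (iii) for $(H,A)$, associativity/coassociativity, and standard dual-basis manipulations should be required; however the bookkeeping is intricate because every occurrence of $\tau_{A,H}$ and every tensor position is swapped under dualization, so the crux of the argument is organizing the Sweedler-type indices so that both sides of the dual identity match term-by-term. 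Non-degeneracy of the evaluation pairing then promotes the resulting scalar equality to the desired tensor identity, completing the verification that $(A^*,H^*)$ is a weak matched pair in the ``right-left'' sense.
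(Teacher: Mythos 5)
The paper states this proposition without any proof at all: the paragraph preceding it constructs the right $A^*$-module algebra structure on $H^*$ and the left $H^*$-comodule coalgebra structure on $A^*$ (citing \cite{weak_smash_coproduct} and \cite{Caenepeel_} for items (i) and (ii)), and the verification of the remaining axioms is left entirely implicit. Your dualization strategy --- transport each axiom through the non-degenerate evaluation pairing using $H\cong H^{**}$, $A\cong A^{**}$ and the dual basis --- is therefore exactly the route the paper intends, and items (i) and (ii) are indeed already covered by the cited references. Two caveats. First, a concrete inaccuracy in your plan: under dualization the axioms (iv) and (v) of Definition \ref{novo weak matched pair} interchange rather than each being self-dual. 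Axiom (iv) concerns the action together with $\varepsilon_s$ of the acted-upon algebra, while axiom (v) concerns the coaction together with $\Delta(1_H)$; since the action of $H$ on $A$ dualizes to the coaction of $H^*$ on $A^*$ and the coaction of $A$ on $H$ dualizes to the action of $A^*$ on $H^*$ (and $\varepsilon_s$, being built from $\varepsilon$ and $m$, dualizes to an expression built from $u$ and $\Delta$), verifying axiom (iv) for $(A^*,H^*)$ requires axiom (v) of $(H,A)$, and vice versa --- not (iv) from (iv) and (v) from (v) as you wrote. Second, your treatment of item (iii) is a description of a computation rather than the computation itself; since the paper supplies nothing here either, this does not put you behind the paper, but a complete argument would still have to organize the Sweedler indices and actually exhibit the term-by-term match you assert, checking in particular that the combined condition (iii) (which merges Majid's (iii) and (iv) into one self-dual identity) really does reproduce its own ``right-left'' analogue under the pairing.
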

	
	\section{The weak Hopf algebra $A \underline{\overline{\#}} H$}
	
	In this section, we analyze under what conditions it is possible to construct a weak bialgebra/Hopf algebra involving the usual smash product and coproduct.
	
	\subsection{The structure of weak bialgebra}
	
	Consider $H$ and $A$ two weak bialgebras.
	First, if $A$ is a $H$-module algebra, then we can define $A \# H$ as $A \otimes H$ with the product
	$$(x\# h)(y\# g) = x(h_1 \cdot y) \# h_2g,$$
	for all $x,y\in A$ and $h,g\in H$.
	It is straightforward to check the associativity of this product.
	However, the natural candidate for unity of this product, the element $1_A \# 1_H$, is neither right nor left unity in $A \# H$. 
	Then, we need to consider a smaller space of it.
	Since $(x\# h)(1_A \# 1_H)=(1_A \# 1_H)(x\# h)(1_A \# 1_H)$, for all $(x\# h) \in A \# H$, we denote
	$$A \underline{\#} H=  (A \# H)(1_A \# 1_H)=\{x(h_1 \cdot 1_A) \# h_2 \ | \ x \in A, h \in H \}$$ 
	and $x \underline{\#} h = (x \# h)(1_A \# 1_H) = x(h_1 \cdot 1_A) \# h_2 \in A \underline{\#} H$. The next result ensure that $A \underline{\#} H$ is an algebra, with unity $1_A \underline{\#} 1_H$.
	
	\begin{pro} Let $H$ and $A$ be weak bialgebras, and $A$ an $H$-module algebra. Then, $A \underline{\#} H$ is an algebra.
	\end{pro}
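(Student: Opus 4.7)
My approach is to leverage the associativity of the (non-unital) product on $A\#H$ (asserted to be a straightforward check just before the proposition) and to realize $A \underline{\#} H = (A\#H)e$, where $e = 1_A \# 1_H$, as a unital subalgebra. Closure is immediate: for any $ue, ve \in (A\#H)e$, associativity of $A\#H$ gives $(ue)(ve) = (u(ev))e \in (A\#H) e$, and associativity of the induced product is inherited from $A \# H$. What remains is to verify that the candidate $1_A \underline{\#} 1_H = e \cdot e = (1_{H1}\cdot 1_A) \# 1_{H2}$ is a two-sided identity.

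Rather than working with $e^2$ directly, I would prove the slightly stronger statement that $e$ itself satisfies $e \cdot y = y = y \cdot e$ for every $y = (x \# h) e \in A \underline{\#} H$; the identity property for $e^2 = e\cdot e$ then follows immediately by one further application on each side.

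For the right half $y \cdot e = y$, expanding $(x \# h) \cdot (e \cdot e)$ with $e^2 = (1_{H1}\cdot 1_A) \# 1_{H2}$ and using (MA3) produces $x(h_1 1_{H1} \cdot 1_A) \# h_2 1_{H2}$. Multiplicativity of $\Delta$ applied to $h = h \cdot 1_H$ yields $h_1 1_{H1} \otimes h_2 1_{H2} = h_1 \otimes h_2$, which collapses the expression back to $(x\#h)e$. This half is routine.

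The main obstacle lies in the left identity $e \cdot y = y$. Computing $e \cdot (x\#h) \cdot e$ produces $(1_{H1} \cdot x)(1_{H2(1)} h_1 \cdot 1_A) \# 1_{H2(2)} h_2$, which involves the three-fold coproduct of $1_H$ in a form where the factors acting on $x$ and on $h_1 \cdot 1_A$ come from different iterations of $\Delta$, so (MA2) is not immediately applicable. To repair this I would invoke Definition~\ref{def_weakbialgebra}(iii) in the specific form $\Delta^2(1_H) = 1_{H1} \otimes 1_{H2} 1_{H1}' \otimes 1_{H2}'$; this is exactly the rewriting that places the two left-hand tensorands in position to be combined as the two components of a single $\Delta(1_H)$. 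A sequence of (MA3), then (MA2) with respect to $\Delta(1_H)$, followed by (MA1), reduces the $A$-entry to $x(1'_{H1} h_1 \cdot 1_A)$, and a final use of $\Delta(1_H)\Delta(h)=\Delta(h)$ absorbs the remaining primed factors, leaving $(x\#h)e$, as required.
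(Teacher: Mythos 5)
Your proof is correct and follows essentially the same route as the paper: both realize $A\underline{\#}H$ as the corner $(A\#H)(1_A\#1_H)$ and reduce the unit axiom to the identities $e(ue)=ue$ and $(ue)e=ue$ for $e=1_A\#1_H$, with $e^2=1_A\underline{\#}1_H$ then serving as the unit. The only difference is one of detail: the paper asserts these identities without proof, whereas you correctly identify that the left-hand one genuinely requires item (iii) of Definition~\ref{def_weakbialgebra} in the form $\Delta^2(1_H)=1_{H1}\otimes 1_{H2}1'_{H1}\otimes 1'_{H2}$ combined with (MA1)--(MA3), and you supply that verification.
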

	
	\begin{proof}
		First we show that $(x \underline{\#} h)(y \underline{\#} g) \in A \underline{\#} H$.
		Indeed:
		\begin{eqnarray*}
			(x \underline{\#} h)(y \underline{\#} g) &=&  (x({h}_{1} \cdot 1_A) \# {h}_{2}) (y({g}_{1} \cdot 1_A) \# {g}_{2}) \\ 
			&=&  (x({h}_{1} \cdot 1_A)({h}_{2} \cdot y(g_1 \cdot 1_A)) \# h_3{g}_{2}) \\
			&=&  (x({h}_{1} \cdot y)({h}_{2} g_1 \cdot 1_A) \# h_3{g}_{2}) \\
			&=&  (x({h}_{1} \cdot y) \# h_2g) (1_A \# 1_H)  \\
			&=& x({h}_{1} \cdot y) \underline{\#} h_2g.
		\end{eqnarray*}
		
		Now, for all $(x\# h) \in A \# H$,  $(1_A \# 1_H) (1_A \# 1_H)(x\# h)=(1_A \# 1_H)(x\# h) $ and  $(x\# h)(1_A \# 1_H) (1_A \# 1_H)= (x\# h)(1_A \# 1_H) $.
		Thus, $1_A \underline{\#} 1_H$ is the unity in $A \underline{\#} H$:
		\begin{eqnarray*}
			(1_A \underline{\#} 1_H) (x \underline{\#} h) &=&  (1_A \# 1_H)(1_A \# 1_H) (x \# h) (1_A \# 1_H)\\
			&=&  (1_A \# 1_H)(x \# h) (1_A \# 1_H)\\
			&=& x \underline{\#} h,
		\end{eqnarray*}
		and also
		\begin{eqnarray*}
			(x \underline{\#} h) (1_A \underline{\#} 1_H) &=&  (x \# h)(1_A \# 1_H)(1_A \# 1_H)(1_A \# 1_H)\\
			&=& (x \# h)(1_A \# 1_H)\\
			&=& x \underline{\#} h.
		\end{eqnarray*}
		
		Therefore, $A \underline{\#} H$ is an  algebra. 
	\end{proof}
	
	\begin{rem} If $H$ is a weak bialgebra cocommutative, then $(x \# h)(1_A \# 1_H) = (1_A \# 1_H)(x \# h)$, for all $(x \# h) \in A \# H$. Indeed,
		\begin{eqnarray*}
			(x \# h)(1_A \# 1_H) &=&  x(h_1 \cdot 1_A ) \# h_2\\
			&=& 1_H \cdot ( x(h_1 \cdot 1_A )) \# h_2\\
			&\stackrel{(\ref{4.7})}{=}& ({1_H}_1 \cdot x)  ( \varepsilon_t({1_H}_2)h_1 \cdot 1_A ) \# h_2\\
			&=& ({1_H}_1 \cdot x) ( \varepsilon_t( \varepsilon_t({1_H}_2)h_1) \cdot 1_A ) \# h_2\\
			&=& ({1_H}_1 \cdot x)  (\varepsilon_t({1_H}_2) \varepsilon_t(h_1) \cdot 1_A ) \# h_2\\
			&=& ({1_H}_1 \cdot x) ({1_H}_2 \varepsilon_t(h_1) \cdot 1_A ) \# h_2\\
			&\stackrel{H \ coco}{=}& ({1_H}_1 \cdot x) ({1_H}_2 \varepsilon_t(h_2) \cdot 1_A ) \# h_1\\
			&\stackrel{(\ref{4.12})}{=}& ({1_H}_1 \cdot x)  ({1_H}_2 {1_H'}_2  \cdot 1_A ) \# {1_H'}_1h\\
			&\stackrel{H \ coco}{=}& ({1_H}_1 \cdot x)  ({1_H}_2 {1_H'}_1  \cdot 1_A ) \# {1_H'}_2h\\
			&=& ({1_H}_1 \cdot x)  ({1_H}_2   \cdot 1_A ) \# {1_H}_3h\\
			&=&  ({1_H}_1 \cdot x ) \# {1_H}_2h\\ 
			&=& (1_A \# 1_H)(x \# h).
		\end{eqnarray*}
	\end{rem}
	
	\medskip
	
	Now, if $H$ an $A$-comodule coalgebra, we also define $A \# H$ as $A\otimes H$ with the coproduct 
	\begin{eqnarray*}
		\Delta: A \# H &\longrightarrow & A \# H \otimes A \# H\\
		x \# h & \longmapsto & \Delta(x \# h) = x_1 \# {h_1}^{0} \otimes x_2 {h_1}^{1} \# h_2
	\end{eqnarray*}
	and $\varepsilon: A \# H \longrightarrow  \Bbbk$ given by $\varepsilon(x \# h) = \varepsilon(x) \varepsilon(h),$ for all $x\in A$ and $h\in H$.
	It is straightforward to check that $\Delta$ is coassociative, however $\varepsilon$ is neither right nor left counit.
	Therefore, we denote $\overline{x \# h} = (id \otimes \varepsilon)\Delta(x \# h) = x_1 \varepsilon(x_2 h^1) \# h^0$ and consider the subspace
	$$\overline{A \# H} = (id \otimes \varepsilon) \Delta (A \# H ) = \{ x_1 \varepsilon(x_2 h^1) \# h^0 \ | \ x \in A, h \in H \}.$$
	
	\begin{pro}\label{delta}
		Let $H$ and $A$ be weak bialgebras, and $H$ an $A$-comodule coalgebra. Then, 	$\overline{A \# H}$ is a coalgebra with coproduct $\Delta$ and counit $\varepsilon$.
	\end{pro}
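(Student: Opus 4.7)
The plan is to establish three facts: that $\Delta$ is coassociative on all of $A\#H$, that $\varepsilon$ satisfies both counit identities when restricted to $\overline{A\#H}$, and that $\Delta$ maps $\overline{A\#H}$ into $\overline{A\#H}\otimes \overline{A\#H}$.

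For coassociativity, I would expand both $(\Delta\otimes id)\Delta(x\#h)$ and $(id\otimes\Delta)\Delta(x\#h)$ directly from the formula $\Delta(x\#h)=x_1\#(h_1)^0\otimes x_2(h_1)^1\#h_2$ and reduce them to a common expression using (CC2), (CC3), the associativity of the product of $A$, and the coassociativity of $\Delta_A$ and $\Delta_H$; this is routine.

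For the counit identities on $\overline{A\#H}$, I would first expand
\[
\Delta(\overline{x\#h})=x_1\varepsilon(x_3h^1)\#((h^0)_1)^0\otimes x_2((h^0)_1)^1\#(h^0)_2,
\]
using the identity $(x_1)_{(1)}\otimes(x_1)_{(2)}\otimes x_2=x_1\otimes x_2\otimes x_3$. Applying $id\otimes\varepsilon$ to the right-hand side collapses back to $\overline{x\#h}$ via (CC1) $\varepsilon(c^1)c^0=c$ together with the counits of $A$ and $H$. Applying $\varepsilon\otimes id$ instead yields $x_1\varepsilon(x_2h^1)\cdot\varepsilon(((h^0)_1)^0)((h^0)_1)^1\#(h^0)_2$, which I would reduce to $\overline{x\#h}$ by invoking (CC2) with $c=h$ to rewrite $(h^0)_1\otimes(h^0)_2\otimes h^1$ as $(h_1)^0\otimes(h_2)^0\otimes(h_1)^1(h_2)^1$, then (CC3) with $c=h_1$ to absorb the iterated coaction, and finally (CC4) together with identities such as (\ref{4.6}) and (\ref{4.13}) in $A$ to eliminate the $\varepsilon_s$ that arises.

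For closure under $\Delta$, coassociativity gives $\Delta(\overline{w})=w_{(1)}\otimes\overline{w_{(2)}}$, placing the second tensor factor inside $\overline{A\#H}$; a symmetric computation using the left counit places the first factor in $\overline{A\#H}$ as well. The main obstacle will be the left counit identity: its proof requires a careful synthesis of the comodule coalgebra axioms of $H$ with the weak bialgebra structure of $A$, absorbing the $\varepsilon_s$-term produced by (CC4) via identities in $A$ to recover the original $h^0$ factor in the $H$-slot. Once this is in place, the remaining verifications are essentially mechanical.
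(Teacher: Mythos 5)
Your proposal is correct in substance, but it organizes the proof differently from the paper, and the difference is worth spelling out. The paper first proves closure by a direct computation establishing the explicit formula $\Delta(\overline{x\#h})=\overline{x_1\#{h_1}^0}\otimes\overline{x_2{h_1}^1\#h_2}$ (using (CC2), (CC3) and the weak multiplicativity of $\varepsilon$), and only then verifies the two counit identities, reading them off from that formula. You invert the order: you compute both counit identities directly on the unbarred expansion $\Delta(\overline{x\#h})=x_1\varepsilon(x_3h^1)\#(({h^0})_1)^0\otimes x_2(({h^0})_1)^1\#({h^0})_2$, and then obtain closure formally. Writing $P=(id\otimes\varepsilon)\Delta$ and $Q=(\varepsilon\otimes id)\Delta$, coassociativity gives $\Delta P=(id\otimes P)\Delta$ and $(P\otimes id)\Delta=(id\otimes Q)\Delta$, so once you know $P^2=P$ (your right counit identity) and $QP=P$ (your left counit identity) you get $(P\otimes P)\Delta P=\Delta P$, i.e.\ $\Delta(\overline{A\#H})\subseteq\overline{A\#H}\otimes\overline{A\#H}$. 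This is the correct reading of your ``symmetric computation'': note that naive symmetry alone would give the false identity $\Delta(\overline{w})=\overline{w_1}\otimes w_2$, since the bar is defined by the right-hand projection only; the argument genuinely needs $Q|_{\overline{A\#H}}=\mathrm{id}$, which you do invoke. The trade-off is that your route yields closure without producing the explicit comultiplication formula, which the paper reuses repeatedly afterwards (Propositions \ref{copro_barrabaixo} and \ref{copro_barrabarra}), so in context the paper's computation is not wasted effort. Two small inaccuracies in your sketch: the right counit identity rests on (CC3) together with the counit of $H$ and $\varepsilon(x_2{h^1}_1)\varepsilon(x_3{h^1}_2)=\varepsilon(x_2h^1)$, not on (CC1), which never actually enters; and for eliminating the $\varepsilon_s$ in the left counit identity the relevant facts are $\Delta(\varepsilon_s(a))=1_{A1}\otimes\varepsilon_s(a)1_{A2}$, the commutation of $A_s$ with $A_t$, and \eqref{4.6}, rather than \eqref{4.13} itself, followed by a final application of (CC2) and the counit of $H$ to reassemble $h^0$. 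Neither affects the validity of the plan.
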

	
	\begin{proof}
		Initially, we show that $\Delta( \overline{x \# h} ) \in \overline{A \# H} \otimes \overline{A \# H}$. Indeed:
		\begin{eqnarray*}
			\Delta( \overline{x \# h} ) &=& \Delta (x_1 \varepsilon(x_2 h^1) \# h^0)\\    
			&=& x_1 \varepsilon(x_3 h^1) \# {{h^0}_1}^{0} \otimes x_2 {{h^0}_1}^{1} \# {{h^0}_2}\\    
			&=& x_1 \varepsilon(x_3 {h_1}^1{h_2}^1) \# {{h}_1}^{00} \otimes x_2 {{h}_1}^{01} \# {{h_2}^{0}}\\    
			&=& x_1 \varepsilon(x_3 {{h_1}^1}_2{h_2}^1) \# {{h}_1}^{0} \otimes x_2 {{h_1}^1}_1 \# {{h_2}^{0}}\\    
			&=& x_1 \varepsilon(x_3 {{h_1}^1}_2) \varepsilon(x_4{{h_1}^1}_3{h_2}^1) \# {{h}_1}^{0} \otimes x_2 {{h_1}^1}_1 \# {{h_2}^{0}}\\    
			&=& x_1 \varepsilon(x_2 {{h_1}^1}_1)  \# {{h}_1}^{0} \otimes x_3 {{h_1}^1}_2 \varepsilon(x_4{{h_1}^1}_3{h_2}^1) \# {{h_2}^{0}}\\    
			&=& \overline{x_1 \# {h_1}^{0}} \otimes \overline{x_2 {h_1}^{1}\# {h_2}}.
		\end{eqnarray*}
		
		Now, we verify that $\varepsilon$ is a counit:
		\begin{eqnarray*}
			(\varepsilon \otimes id) \Delta ( \overline{x \# h} ) &=& \varepsilon  (\overline{x_1 \# {h_1}^{0}})  \overline{x_2 {h_1}^{1}\# {h_2}}\\
			&=& \varepsilon  (x_1 {h_1}^{01}) \varepsilon ({h_1}^{00})  \overline{x_2 {h_1}^{1}\# {h_2}}\\
			&=& \varepsilon  (x_1 {{h_1}^{1}}_1) \varepsilon ({h_1}^{0})  \overline{x_2 {{h_1}^{1}}_2\# {h_2}}\\
			&=& \varepsilon ({h_1}^{0})  \overline{x {{h_1}^{1}}\# {h_2}}\\
			&=& \varepsilon ({h_1}^{0})  x_1 {{h_1}^{1}}_1 \varepsilon(x_2 {{h_1}^{1}}_2 {{h_2}^{1}})\# {{h_2}^{0}}\\
			&=& \varepsilon ({h_1}^{0})  x_1 \varepsilon_s({{h_1}^{1}})_1 \varepsilon(x_2 \varepsilon_s({{h_1}^{1}})_2 {{h_2}^{1}})\# {{h_2}^{0}}\\
			&=&   x_1 {1_A}_1 \varepsilon(x_2 \varepsilon_s({{h_1}^{1}}) {1_A}_2 {{h_2}^{1}})\# \varepsilon ({h_1}^{0}) {{h_2}^{0}}\\
			&=&   x_1 {1_A}_1 \varepsilon(x_2 {1_A}_2  \varepsilon_s({{h_1}^{1}}) {{h_2}^{1}})\# \varepsilon ({h_1}^{0}) {{h_2}^{0}}\\
			&=&   x_1 \varepsilon(x_2 {{h_1}^{1}} {{h_2}^{1}})\# \varepsilon ({h_1}^{0}) {{h_2}^{0}}\\
			&=&   x_1 \varepsilon(x_2 {{h_1}^{1}})\# \varepsilon ({{h}^{0}}_1) {{h}^{0}}_2\\
			&=& \overline{x \# h},
		\end{eqnarray*}
		and also
		\begin{eqnarray*}
			(id \otimes \varepsilon) \Delta ( \overline{x \# h} ) &=& \overline{x_1 \# {h_1}^{0}}  \varepsilon  ( \overline{x_2 {h_1}^{1}\# {h_2}})\\    
			&=& \overline{x_1 \# {h_1}^{0}}  \varepsilon  ( x_2 {h_1}^{1} {h_2}^{1}) \varepsilon({h_2}^0)\\  
			&=& \overline{x_1 \# {h^0}_{1}}  \varepsilon  ( x_2 {h^1}) \varepsilon({h^0}_2)\\    
			&=& \overline{x_1 \# {h^0}}  \varepsilon  ( x_2 {h^1}) \\    
			&=& x_1 \# {h}^{00}  \varepsilon  ( x_2 {h^{01}})  \varepsilon  ( x_3 {h^{1}}) \\    
			&=& x_1 \# {h}^{0}  \varepsilon  ( x_2 {h^{1}}_1)  \varepsilon  ( x_3 {h^{1}}_2) \\    
			&=&\overline{x \# h}.   
		\end{eqnarray*}
		Therefore, $(\overline{A \# H}, \Delta, \varepsilon)$ is a coalgebra.
	\end{proof}
	
	We observe that such a coalgebra is a projection, since $\overline{\overline{x \# h}} = \overline{x \# h}$, for all $x \# h \in A \# H$, and so $\overline{\overline{A \# H}} = \overline{A \# H}$.

	\begin{pro}\label{delta_mult}
		Let $(H,A)$ be an abelian weak matched pair.
		Then, $\Delta$ is multiplicative in $A \# H$.
	\end{pro}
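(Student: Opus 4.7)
The plan is to verify the identity $\Delta\bigl((x\#h)(y\#g)\bigr) = \Delta(x\#h)\Delta(y\#g)$ directly, by expanding both sides in Sweedler notation and matching them. The ingredients will be item (iii) of Definition \ref{novo weak matched pair} in the form of Remark \ref{troca}, the comodule-coalgebra axiom (CC2), the module-algebra axiom (MA2), and the abelian hypothesis ($A$ commutative, $H$ cocommutative).

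For the left-hand side, I would first use the smash-product formula to write $(x\#h)(y\#g) = x(h_1\cdot y)\# h_2 g$ and then apply $\Delta$. Multiplicativity of $\Delta_A$ and of $\Delta_H$ produce an expression that contains the block $(h_2 g_1)^0 \otimes (h_1\cdot y)_1 \otimes (h_1\cdot y)_2 (h_2 g_1)^1$. This is precisely the left-hand side of Remark \ref{troca} with dummy arguments $h$, $g_1$, $y$, so substituting yields
\[
x_1({h_1}^0\cdot y_1)\#\,{h_3}^0{g_1}^0\,\otimes\, x_2 {h_1}^1(h_2\cdot y_2){h_3}^1(h_4\cdot {g_1}^1)\#\,h_5 g_2.
\]

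For the right-hand side, I would multiply $\Delta(x\#h)\Delta(y\#g)$ factor by factor using the product on $A\#H\otimes A\#H$, then apply (MA2) to split $h_2\cdot (y_2{g_1}^1)$ as $(h_2\cdot y_2)(h_3\cdot {g_1}^1)$, and (CC2) applied to $h_1$ to rewrite ${{h_1}^0}_1\otimes{{h_1}^0}_2\otimes{h_1}^1$ as ${h_1}^0\otimes{h_2}^0\otimes{h_1}^1{h_2}^1$. The RHS then becomes
\[
x_1({h_1}^0\cdot y_1)\#\,{h_2}^0{g_1}^0\,\otimes\, x_2 {h_1}^1 {h_2}^1(h_3\cdot y_2)(h_4\cdot {g_1}^1)\#\,h_5 g_2.
\]

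The two resulting expressions differ only in two places: the position of ${h_2}^1$ relative to $(h_3\cdot y_2)$ in the third tensor slot, and the transposition of the Sweedler indices $h_2\leftrightarrow h_3$ between the second and third slots. Commutativity of $A$ fixes the first discrepancy and cocommutativity of $H$ fixes the second, completing the proof. I expect the main obstacle to be purely the bookkeeping: one must carefully distinguish which Sweedler subscripts come from $\Delta_A$, from $\Delta_H$, or from the coaction $\rho$, and ensure that Remark \ref{troca} is invoked with the correct dummy substitutions. No deeper conceptual difficulty should appear.
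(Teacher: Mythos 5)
Your proposal is correct and follows essentially the same route as the paper: the paper also reduces $\Delta((x\#h)(y\#g))$ via Remark \ref{troca} to the expression $x_1({h_1}^0\cdot y_1)\#{h_3}^0{g_1}^0\otimes x_2{h_1}^1(h_2\cdot y_2){h_3}^1(h_4\cdot{g_1}^1)\#h_5g_2$, then uses cocommutativity of $H$, commutativity of $A$, (CC2) and (MA2) to identify it with $\Delta(x\#h)\Delta(y\#g)$. The only difference is presentational — you meet in the middle while the paper writes a single chain of equalities — and the two discrepancies you isolate (the placement of ${h_2}^1$ and the swap $h_2\leftrightarrow h_3$) are exactly the steps the paper resolves with the abelian hypothesis.
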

	
	\begin{proof}
		For all $(x \# h), (y \# g)$ in $A \# H$,
		\begin{eqnarray*}
			\Delta ((x \# h) (y \# g)) &=& \Delta (x (h_1 \cdot y) \# h_2g)\\
			&=& x_1 {(h_1 \cdot y)}_{1} \# {(h_2g_1)}^{0} \otimes x_2 {(h_1 \cdot y)}_{2}{(h_2g_1)}^{1} \# h_3g_2\\
			&\stackrel{\ref{troca}}{=}& x_1 ({h_1}^{0} \cdot y_1) \# {h_3}^{0}{g_1}^{0} \otimes x_2 {h_1}^{1}(h_2 \cdot y_2){h_3}^{1}(h_4 \cdot {g_1}^{1}) \# h_5g_2\\
			&=& x_1 ({h_1}^{0} \cdot y_1) \# {h_2}^{0}{g_1}^{0} \otimes x_2 {h_1}^{1}{h_2}^{1}(h_3 \cdot y_2)(h_4 \cdot {g_1}^{1}) \# h_5g_2\\
			&=& x_1 ({{h_1}^{0}}_1 \cdot y_1) \# {{h_1}^{0}}_2{g_1}^{0} \otimes x_2 {h_1}^{1}(h_2 \cdot y_2)(h_3 \cdot {g_1}^{1}) \# h_4g_2\\
			&=& \Delta (x \# h) \Delta (y \# g).
		\end{eqnarray*}
	\end{proof}
	
	\medskip
	
	Now, in order to obtain a structure of weak bialgebra in a subspace of $A \otimes H$, the abelian weak matched pair $(H,A)$ needs to satisfy some compatible conditions.
	
	\begin{defi}\label{weak matched pair} An abelian weak matched pair $(H,A)$ is called a \textit{compatible weak matched pair} if, for all $h,g\in H$,
		\begin{enumerate}
			\item[(i)] $h\cdot 1_A=h_1\cdot 1_{A1}\varepsilon(h_2\cdot 1_{A2})$,
			\item[(ii)] $h\cdot 1_{A1}\otimes g\cdot 1_{A2}=(h\cdot 1_A)1_{A1}\otimes (g\cdot 1_A)1_{A2}$.
		\end{enumerate}
	\end{defi}
	
	\begin{lem}\label{lema_basic}
		Let $(H,A)$ be a \textit{compatible weak matched pair}. Then, for all $h\in H$,
		\begin{enumerate}
			\item[(i)] $h\cdot 1_A=(h_1\cdot 1_A)\varepsilon_s(h_2\cdot 1_A)$,
			\item[(ii)] $\varepsilon_s(h\cdot 1_A)=\varepsilon_s(h_1\cdot 1_A)\varepsilon_s(h_2\cdot 1_A)$,
			\item[(iii)] $h\cdot 1_{A1}\otimes 1_{A2}=(h\cdot 1_A)1_{A1}\otimes 1_{A2}$,
			\item[(iv)] $h\cdot 1_A=(h\cdot 1_{A1})\varepsilon_s(1_{A2})$.
		\end{enumerate}
	\end{lem}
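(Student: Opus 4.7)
\medskip

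\noindent\emph{Proof proposal.} I would prove the four items in the order \textup{(iii)}, \textup{(iv)}, \textup{(i)}, \textup{(ii)}, since each later item will reuse an earlier one (although presenting them in the stated order in a write-up is equally straightforward).

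First, \textbf{item (iii)} falls out immediately from Definition \ref{weak matched pair}(ii) by specializing $g = 1_H$: the axiom then reads $h\cdot 1_{A1}\otimes 1_H\cdot 1_{A2}=(h\cdot 1_A)1_{A1}\otimes (1_H\cdot 1_A)1_{A2}$, and since $1_H\cdot y = y$ for every $y\in A$ by (MA1), both $1_H\cdot 1_{A2}$ and $(1_H\cdot 1_A)$ simplify and we recover (iii). For \textbf{item (iv)}, I would apply the multiplication map composed with $\mathrm{id}\otimes \varepsilon_s$ to both sides of (iii). On the left this yields $(h\cdot 1_{A1})\varepsilon_s(1_{A2})$, which is exactly the claimed right-hand side. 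On the right it yields $(h\cdot 1_A)\,1_{A1}\varepsilon_s(1_{A2})$; using the general identity $y_1\varepsilon_s(y_2)=y$ with $y=1_A$ (recalled after Definition \ref{def_weakbialgebra}), this collapses to $(h\cdot 1_A)\cdot 1_A = h\cdot 1_A$.

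For \textbf{item (i)}, I would start from the right-hand side and unfold the definition of $\varepsilon_s$, writing
\begin{equation*}
(h_1\cdot 1_A)\varepsilon_s(h_2\cdot 1_A) \;=\; (h_1\cdot 1_A)\,1_{A1}\,\varepsilon\!\bigl((h_2\cdot 1_A)1_{A2}\bigr).
\end{equation*}
The key move is to recognize the tensor $(h_1\cdot 1_A)1_{A1}\otimes (h_2\cdot 1_A)1_{A2}$ appearing here, and to invoke Definition \ref{weak matched pair}(ii) with $h\mapsto h_1$ and $g\mapsto h_2$ to rewrite it as $h_1\cdot 1_{A1}\otimes h_2\cdot 1_{A2}$. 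The resulting expression $(h_1\cdot 1_{A1})\varepsilon(h_2\cdot 1_{A2})$ equals $h\cdot 1_A$ by Definition \ref{weak matched pair}(i). Finally, \textbf{item (ii)} follows by applying $\varepsilon_s$ to both sides of (i); since $\varepsilon_s(h_2\cdot 1_A)\in A_s$, the identity \eqref{4.20}, namely $\varepsilon_s(y\,\varepsilon_s(k))=\varepsilon_s(y)\varepsilon_s(k)$, gives exactly $\varepsilon_s(h\cdot 1_A)=\varepsilon_s(h_1\cdot 1_A)\varepsilon_s(h_2\cdot 1_A)$.

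I do not expect any real obstacle here: the compatibility axioms of Definition \ref{weak matched pair} are essentially designed to make the ``partial unit'' $h\mapsto h\cdot 1_A$ behave like a projection compatible with $\Delta$ and $\varepsilon_s$, and the only delicate point is making sure one respects the order of primes in the two copies of $\Delta(1_A)$ appearing in (i) — everything else is a mechanical application of $\varepsilon_s(x)=1_{A1}\varepsilon(x1_{A2})$, the counit axiom $y_1\varepsilon_s(y_2)=y$, and identity \eqref{4.20}.
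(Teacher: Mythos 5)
Your proposal is correct and follows essentially the same route as the paper: items (i) and (iii) are obtained by combining Definition \ref{weak matched pair}(i)--(ii) with (MA1) and the definition of $\varepsilon_s$ (you merely read the chain for (i) in the reverse direction), item (ii) by applying $\varepsilon_s$ and identity \eqref{4.20}, and item (iv) by multiplying (iii) against $\varepsilon_s(1_{A2})$ and using $y_1\varepsilon_s(y_2)=y$, which is exactly what the paper's ``straightforward from Definition \ref{weak matched pair}(ii)'' amounts to.
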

	\begin{proof}
		We verify each item, as follows.
		
		$\underline{(i):}$ By items (i) and (ii) of Definition \ref{weak matched pair}, and the definition of $\varepsilon_s$,
		\begin{eqnarray*}
			h\cdot 1_A&=&h_1\cdot 1_{A1}\varepsilon(h_2\cdot 1_{A2})\\
			&=& (h_1\cdot 1_A) 1_{A1}\varepsilon((h_2\cdot 1_A) 1_{A2})\\
			&=& (h_1\cdot 1_A)\varepsilon_s(h_2\cdot 1_A).
		\end{eqnarray*}
		
		$\underline{(ii):}$ Just apply $\varepsilon_s$ on the both sides of the equality in item $(i)$ of Definition \ref{weak matched pair}, and use equality \eqref{4.20}.
		
		$\underline{(iii):}$ \begin{eqnarray*}
			h\cdot 1_{A1}\otimes 1_{A2}&=&h\cdot 1_{A1}\otimes 1_H\cdot 1_{A2}\\
			&=& (h\cdot 1_A) 1_{A1}\otimes (1_H\cdot 1_A) 1_{A2}\\
			&=& (h\cdot 1)1_{A1}\otimes 1_{A2}.
		\end{eqnarray*}
		
		$\underline{(iv):}$ Straightforward from item (ii) of Definition \ref{weak matched pair}.
	\end{proof}
	
	From now on, we always consider $(H,A)$ a compatible weak matched pair.
	
	\begin{pro}\label{copro_barrabaixo}
		The coproduct $\Delta$ is coassociative in $A \underline{\#} H$.   
	\end{pro}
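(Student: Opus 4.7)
The plan is to leverage the multiplicativity of $\Delta$ on $A\#H$ (Proposition \ref{delta_mult}) together with coassociativity of $\Delta$ on $A\#H$ (which is a straightforward consequence of coassociativity of $\Delta_A$, $\Delta_H$, and the comodule coalgebra axiom (CC3)). Since $A\underline{\#}H = (A\#H)(1_A\#1_H)$, the task splits into two: (a) verify that $\Delta$ restricts to a map $A\underline{\#}H \to A\underline{\#}H \otimes A\underline{\#}H$, and then (b) inherit coassociativity from the ambient space.

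For (a), writing $x\underline{\#}h = (x\#h)(1_A\#1_H)$ and using multiplicativity yields
\begin{equation*}
\Delta(x\underline{\#}h) = \Delta(x\#h)\,\Delta(1_A\#1_H),
\end{equation*}
so it suffices to show $\Delta(1_A\#1_H) \in A\underline{\#}H \otimes A\underline{\#}H$. Expanding,
\begin{equation*}
\Delta(1_A\#1_H) = 1_{A1}\#{1_{H1}}^{0}\otimes 1_{A2}{1_{H1}}^{1}\#1_{H2}.
\end{equation*}
I would then apply item (v) of Definition \ref{novo weak matched pair} to rewrite the factor ${1_{H1}}^0 \otimes {1_{H1}}^1 \otimes 1_{H2}$ as $1_{H1} \otimes \varepsilon({1'_{H1}}^0){1'_{H1}}^1 \otimes 1_{H2}1'_{H2}$, and combine this with (MA4) (to express $k\cdot 1_A$ as $\varepsilon_t(k)\cdot 1_A$), with (\ref{4.7}) (placing $1_{H2}\in H_t$), and with Lemma \ref{lema_basic} to recognize each tensor factor as an element of the form $y(k_1\cdot 1_A)\#k_2 \in A\underline{\#}H$.

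Once the restriction in (a) is established, (b) is immediate: the identity $(\Delta\otimes id)\Delta = (id\otimes \Delta)\Delta$ holds on all of $A\#H$, and its restriction to $A\underline{\#}H$ gives coassociativity there.

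The main obstacle is the bookkeeping in (a). Axiom (v) of the weak matched pair was engineered precisely for this step, but threading it through the comodule coalgebra structure while simultaneously splitting both $\Delta_A(1_A)$ and $\Delta_H(1_H)$ and absorbing the residual $1_{A2}$-type factors into the form $(k_1\cdot 1_A)$ requires careful index tracking. The compatibility identities of Definition \ref{weak matched pair} together with Lemma \ref{lema_basic}(iii)–(iv) are likely essential at exactly this last step, since they describe how $h\cdot 1_A$ interacts with $\Delta_A(1_A)$ — the very interaction needed to recognize $\Delta(1_A\#1_H)$ as living in the target subspace.
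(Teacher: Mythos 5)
Your overall architecture is sound and matches the paper's opening move: since $\Delta$ is coassociative on all of $A\#H$, it suffices to show $\Delta(A\underline{\#}H)\subseteq A\underline{\#}H\otimes A\underline{\#}H$. Your reduction of this to the single element $1_A\#1_H$ via multiplicativity is also legitimate, because $A\underline{\#}H=(A\#H)(1_A\#1_H)$ is a left ideal of $A\#H$, so $\Delta(x\#h)\Delta(1_A\#1_H)\in A\underline{\#}H\otimes A\underline{\#}H$ as soon as $\Delta(1_A\#1_H)$ is. (A small improvement: since $(x\#h)(1_A\#1_H)(1_A\#1_H)=(x\#h)(1_A\#1_H)$, you may as well reduce to $\Delta(1_A\underline{\#}1_H)$.) This is a genuinely different organization from the paper, which instead computes $\Delta(x\underline{\#}h)$ for general $x,h$ and derives the explicit formula $\Delta(x\underline{\#}h)=x_1\underline{\#}{h_1}^0\otimes x_2{h_1}^1\underline{\#}h_2$ — a stronger output that the paper reuses immediately afterwards (in the description of $\overline{x\underline{\#}h}$, Proposition \ref{copro_barrabarra}, and identity \eqref{**}).

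The gap is in the step you leave as ``bookkeeping'': establishing $\Delta(1_A\#1_H)\in A\underline{\#}H\otimes A\underline{\#}H$ with the tools you list. Membership in $A\underline{\#}H$ is equivalent to invariance under right multiplication by $1_A\#1_H$, and applying this test to $\Delta(1_A\#1_H)=1_{A1}\#{1_{H1}}^0\otimes 1_{A2}{1_{H1}}^1\#1_{H2}$ produces, after (CC2), the expression $1_{A1}({1_{H1}}^0\cdot 1_A)\#{1_{H2}}^0\otimes 1_{A2}{1_{H1}}^1{1_{H2}}^1(1_{H3}\cdot 1_A)\#1_{H4}$. Identifying this with the original forces you to relate $\Delta_A(k\cdot 1_A)$ to the coaction — exactly the content of item (iii) of Definition \ref{novo weak matched pair} (Remark \ref{troca}), used together with Definition \ref{weak matched pair}(ii) and Lemma \ref{lema_basic}(i). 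Item (v), which you single out as the engine of the argument, only lets you strip the coaction off $1_{H1}$ at the cost of an extra copy of $\Delta(1_H)$; it cannot control the factor $({1_{H1}}^0\cdot 1_A)$ created by the invariance test. Indeed, in the paper item (v) plays no role in this proposition at all — it is reserved for the weak comultiplicativity of the unit in Theorem \ref{principal}. So your reduction, while valid, does not actually shorten the work: the unit case demands the same computation the paper performs for general $x,h$, and your sketch omits its essential ingredient.
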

	\begin{proof} Since $A \# H$ is coassociative, to prove the coassociativity of $A \underline{\#} H$ is enough to show that the coproduct $\Delta$ satisfies $\Delta(A \underline{\#}H) \subseteq A \underline{\#}H \otimes A \underline{\#}H$.
		Let $x \underline{\#} h\in A \underline{\#} H$,
		\begin{eqnarray*}
			\Delta (x \underline{\#} h)&=& \Delta (x (h_1 \cdot 1_A) \# h_2)\\
			&=& x_1 {(h_1 \cdot 1_A)}_{1} \# {h_2}^{0} \otimes x_2 {(h_1 \cdot 1_A)}_{2}{h_2}^{1} \# h_3\\
			&\stackrel{\ref{troca}}{=}& x_1 ({h_1}^{0} \cdot {1_A}_1) \# {h_3}^{0}{1_H}^{0} \otimes x_2 {h_1}^{1}(h_2 \cdot {1_A}_2){h_3}^{1}(h_4 \cdot {1_H}^{1}) \# h_5\\
			&\stackrel{\ref{weak matched pair}(ii)}{=}& x_1 ({h_1}^{0} \cdot {1_A}){1_A}_1 \# {h_3}^{0}{1_H}^{0} \otimes x_2 {h_1}^{1}(h_2 \cdot {1_A}){1_A}_2{h_3}^{1}(h_4 \cdot {1_H}^{1}) \# h_5\\
			&\stackrel{\ref{lema_basic}(i)}{=}& x_1 ({{h_1}^{0}}_1 \cdot {1_A}) \varepsilon_s({{h_1}^{0}}_2 \cdot 1_A){1_A}_1 \# {h_3}^{0}{1_H}^{0} \otimes x_2 {h_1}^{1}(h_2 \cdot {1_A}) \\
			& \ &{1_A}_2{h_3}^{1}(h_4 \cdot {1_H}^{1}) \# h_5\\
			&=& x_1 ({{h_1}^{0}} \cdot {1_A}) \varepsilon_s(({h_2}^{0}\cdot 1_A) {1_A}_1) \# {h_4}^{0}{1_H}^{0} \otimes x_2 {h_1}^{1}{h_2}^{1}(h_3 \cdot {1_A})\\
			& \ &{1_A}_2{h_4}^{1}(h_5 \cdot {1_H}^{1}) \# h_6\\
			&\stackrel{\ref{weak matched pair}(ii)}{=}& x_1 ({{h_1}^{0}} \cdot {1_A}) \varepsilon_s({h_2}^{0} \cdot {1_A}_1) \# {h_4}^{0}{1_H}^{0} \otimes x_2 {h_1}^{1}{h_2}^{1}(h_3 \cdot {1_A}_2) \\
			& \ & {h_4}^{1}(h_5 \cdot {1_H}^{1}) \# h_6\\
			&=& x_1 ({{h_1}^{0}} \cdot {1_A}) \varepsilon_s({({h_2} \cdot 1_A)}_1) \# {h_3}^{0} \otimes x_2 {h_1}^{1}{({h_2} \cdot 1_A)}_2{h_3}^{1} \# h_4\\
			&=& x_1 ({{h_1}^{0}} \cdot {1_A}) {1_A}_1 \# {h_3}^{0} \otimes x_2 {h_1}^{1}({h_2} \cdot 1_A){1_A}_2{h_3}^{1} \# h_4\\
			&=&x_1 {1_A}_1 ( {{h_1}^{0}} \cdot 1_A) \# {{h_2}^{0}} \otimes x_2 {1_A}_2{h_1}^{1}{h_2}^{1}({h_3} \cdot 1_A) \# h_4\\
			&=&x_1 ( {{h_1}^{0}}_{1} \cdot 1_A) \# {{h_1}^{0}}_{2} \otimes x_2 {h_1}^{1}({h_2} \cdot 1_A) \# h_3\\
			&=& x_1 \underline{\#} {h_1}^{0}\otimes x_2 {h_1}^{1}\underline{\#} {h_2}.
		\end{eqnarray*}
		Therefore, $\Delta (x \underline{\#} h)=x_1 \underline{\#} {h_1}^{0}\otimes x_2 {h_1}^{1}\underline{\#} {h_2}.$
	\end{proof}
	
	\medskip
	
	Note that $\varepsilon(x \underline{\#} h) = \varepsilon(x (h_1 \cdot 1_A) \# h_2) = \varepsilon(x (h_1 \cdot 1_A)) \varepsilon(h_2) =  \varepsilon(x (h \cdot 1_A))$, for all $x \underline{\#} h$ in $A \underline{\#} H$. However, it is easy to check that $\varepsilon$ is neither right nor left counit in $A \underline{\#} H$. Then, we consider the subspace $\overline{A \underline{\#} H}$ and, 
	\begin{eqnarray*}
		\overline{x \underline{\#} h}&=& (id \otimes \varepsilon)\Delta(x \underline{\#} h)\\
		&\stackrel{\ref{copro_barrabaixo}}{=}&x_1 \underline{\#} {h_1}^{0} \varepsilon( x_2 {h_1}^{1}\underline{\#} {h_2})\\
		&=&x_1 \underline{\#} {h_1}^{0} \varepsilon( x_2 {h_1}^{1}( h_2 \cdot 1_A))\\
		&=&x_1 ({{h_1}^{0}}_1 \cdot 1_A) \# {{h_1}^{0}}_2 \varepsilon( x_2 {h_1}^{1}( h_2 \cdot 1_A))\\
		&=&x_1 ({h_1}^{0} \cdot 1_A) \# {h_2}^{0} \varepsilon( x_2 {h_1}^{1}{h_2}^{1}( h_3 \cdot 1_A))\\
		&\stackrel{(\ref{4.15})}{=}& x \varepsilon_s({h_1}^{1}{h_2}^{1}( h_3 \cdot 1_A))({h_1}^{0} \cdot 1_A) \# {h_2}^{0},
	\end{eqnarray*}
	for all $\overline{x \underline{\#} h}\in \overline{A \underline{\#} H}$.
	
	\begin{pro}\label{copro_barrabarra}The coproduct $\Delta$ satisfies 
		$\Delta( \overline{x \underline{\#} h}) = \overline{x_1 \underline{\#} {h_1}^{0}} \otimes \overline{x_2 {h_1}^{1} \underline{\#} h_2}$, for all $ \overline{x \underline{\#} h}\in \overline{A \underline{\#} H}$.
	\end{pro}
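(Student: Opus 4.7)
The plan is to shortcut the calculation by invoking Proposition \ref{delta} directly. Observe that $A \underline{\#} H$ is a vector subspace of $A \# H$, and by Proposition \ref{copro_barrabaixo} its coproduct is exactly the restriction of $\Delta$ from $A \# H$. Hence the projection $\overline{\,\cdot\,} = (\text{id} \otimes \varepsilon)\Delta$ is the same map in both settings; in particular, $\overline{x \underline{\#} h}$ coincides with the image of $x \underline{\#} h$, viewed as an element of $A \# H$, under the $A \# H$-level projection.

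First, I would extract from the proof of Proposition \ref{delta} the identity
\[
\Delta(\overline{z}) \;=\; (\overline{\,\cdot\,} \otimes \overline{\,\cdot\,})(\Delta(z)),
\]
valid for every $z \in A \# H$. That proof establishes the formula on pure tensors $z = x \# h$; since $\Delta$ and $\overline{\,\cdot\,}$ are both $\Bbbk$-linear, it extends to arbitrary $z$ by linearity. Applying it to $z = x \underline{\#} h$ and inserting the expression $\Delta(x \underline{\#} h) = x_1 \underline{\#} {h_1}^{0} \otimes x_2 {h_1}^{1} \underline{\#} h_2$ from Proposition \ref{copro_barrabaixo}, then applying $\overline{\,\cdot\,}$ in each tensor slot, yields precisely
\[
\Delta(\overline{x \underline{\#} h}) \;=\; \overline{x_1 \underline{\#} {h_1}^{0}} \otimes \overline{x_2 {h_1}^{1} \underline{\#} h_2}.
\]

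The main obstacle, such as it is, is purely bookkeeping: one must verify that the extension by linearity is harmless when $x \underline{\#} h = x(h_1 \cdot 1_A) \# h_2$ already carries an implicit Sweedler sum over $\Delta_H(h)$. A fully direct alternative would begin with the explicit expression $\overline{x \underline{\#} h} = x\,\varepsilon_s({h_1}^{1} {h_2}^{1}(h_3 \cdot 1_A))({h_1}^{0} \cdot 1_A) \# {h_2}^{0}$ derived just before the statement, apply the $A \# H$-coproduct, and simplify using (CC1)--(CC4), Lemma \ref{lema_basic}, and items (i)--(ii) of Definition \ref{weak matched pair}; this would essentially reproduce the lengthy calculation already carried out in Proposition \ref{copro_barrabaixo}, and the approach above is designed precisely to bypass that duplication.
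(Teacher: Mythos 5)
Your argument is correct, and it is a genuinely different (and much shorter) route than the paper's. The paper proves the proposition by two long Sweedler computations: it expands $\Delta(\overline{x \underline{\#} h})$ starting from $\overline{x \underline{\#} h} = x_1 \underline{\#} {h_1}^{0}\,\varepsilon(x_2 {h_1}^{1}(h_2 \cdot 1_A))$ on one side, expands $\overline{x_1 \underline{\#} {h_1}^{0}} \otimes \overline{x_2 {h_1}^{1} \underline{\#} h_2}$ on the other, and matches both against the common expression $x_1 \underline{\#} {h_1}^{0} \otimes x_2 {h_1}^{1} \underline{\#} {h_2}^{0}\,\varepsilon(x_3 {h_2}^{1}(h_3 \cdot 1_A))$, using Proposition \ref{copro_barrabaixo}, Lemma \ref{lema_basic} and the comodule-coalgebra axioms throughout. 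You instead abstract the first computation in the proof of Proposition \ref{delta} into the intertwining identity $\Delta \circ P = (P \otimes P) \circ \Delta$ for the projection $P = (\mathrm{id} \otimes \varepsilon)\Delta$ on all of $A \# H$ (valid by linearity, since that computation establishes it on pure tensors $x \# h$, and the hypothesis of Proposition \ref{delta} --- $H$ an $A$-comodule coalgebra --- is part of the standing weak matched pair assumption), and then feed in $\Delta(x \underline{\#} h) = x_1 \underline{\#} {h_1}^{0} \otimes x_2 {h_1}^{1} \underline{\#} h_2$ from Proposition \ref{copro_barrabaixo}. This is sound: $x \underline{\#} h$ is just an element of $A \# H$, $P \otimes P$ is a well-defined linear map on $A\#H \otimes A\#H$ whose value does not depend on how its argument is written as a sum of pure tensors, and your concern about the implicit Sweedler sum in $x \underline{\#} h = x(h_1 \cdot 1_A) \# h_2$ is indeed harmless for exactly that reason. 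The only stylistic caveat is that you are citing a displayed identity from the \emph{proof} of Proposition \ref{delta} rather than its statement, so a careful write-up should isolate $\Delta \circ P = (P \otimes P) \circ \Delta$ as an explicit lemma. What your approach buys is brevity and transparency (the result is just ``the projection commutes with $\Delta$, and $\Delta$ preserves $A \underline{\#} H$''); what the paper's computation buys is an explicit closed form for both sides, though that intermediate expression is not reused elsewhere.
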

	
	\begin{proof} Consider $ \overline{x \underline{\#} h}\in \overline{A \underline{\#} H}$. On the one hand,
		\begin{eqnarray*}
			\Delta (\overline{x \underline{\#} h})&=& \Delta (x_1 \underline{\#} {h_1}^{0}) \varepsilon( x_2 {h_1}^{1}( h_2 \cdot 1_A))\\
			&\stackrel{\ref{copro_barrabaixo}}{=}& x_1 \underline{\#} {{{h_1}^{0}}_1}^0 \otimes x_2 {{{h_1}^{0}}_1}^1 \underline{\#} {{h_1}^{0}}_2 \varepsilon ( x_3 {h_1}^1 ( h_2 \cdot 1_A) ) \\
			&=&  x_1 \underline{\#} {h_1}^{00} \otimes  x_2 {h_1}^{01} \underline{\#} {h_2}^{0} \varepsilon( x_3 {h_1}^1 {h_2}^1( h_3 \cdot 1_A))\\
			&=&  x_1 \underline{\#} {h_1}^0 \otimes x_2 {{h_1}^{1}}_1 \underline{\#} {h_2}^0 \varepsilon( x_3 {{h_1}^{1}}_{2}{h_2}^1 ( h_3 \cdot 1_A ) )\\
			&=&  x_1 \underline{\#} {h_1}^{0} \otimes  x_2 {h_1}^1 \underline{\#} {h_2}^{0} \varepsilon( x_3 {h_2}^{1}( h_3 \cdot 1_A)).
		\end{eqnarray*}
		
		On the other hand,
		\begin{eqnarray*}
			& & \overline{x_1 \underline{\#} {h_1}^{0}} \otimes \overline{x_2 {h_1}^{1} \underline{\#} h_2}\\
			&=&x_1 \underline{\#} {{{h_1}^{0}}_1}^{0} \varepsilon (x_2 {{{h_1}^{0}}_1}^{1} ({{{h_1}^{0}}_2} \cdot 1_A) ) \otimes x_3 {{h_1}^{1}}_1 \underline{\#} {h_2}^{0} \varepsilon (x_4 {{h_1}^{1}}_2 {h_2}^{1}(h_3 \cdot 1_A))\\
			&=&x_1 \underline{\#} {{{h_1}^{0}}_1}^{0} \varepsilon (x_2 {{{h_1}^{0}}_1}^{1} ({{{h_1}^{0}}_2} \cdot 1_A) ) \otimes x_3 {{h_1}^{1}}_1 \underline{\#} {h_2}^{0} \varepsilon (x_4 {{h_1}^{1}}_2 {h_2}^{1}(h_3 \cdot 1_A))\\
			&=&x_1 \underline{\#} {{{h_1}^{00}}} \varepsilon (x_2 {{{h_1}^{01}}} ({{{h_2}^{0}}} \cdot 1_A) ) \otimes x_3 {{h_1}^{1}} {{h_2}^{1}} \underline{\#} {h_3}^{0} \varepsilon (x_4 {h_3}^{1}(h_4 \cdot 1_A))\\
			&=&x_1 \underline{\#} {{{h_1}^{0}}} \varepsilon (x_2 {{{h_1}^{1}}_1}({{{h_2}^{0}}} \cdot 1_A) ) \otimes x_3 {{{h_1}^{1}}_2} {{h_2}^{1}} \underline{\#} {h_3}^{0} \varepsilon (x_4 {h_3}^{1}(h_4 \cdot 1_A))\\
			&=& x_1 \underline{\#} {{{h_1}^{0}}} \varepsilon (x_2 ({{{h_2}^{0}}} \cdot 1_A) ) \varepsilon (x_3 {{{h_1}^{1}}_1} )\otimes x_3 {{{h_1}^{1}}_2} {{h_2}^{1}} \underline{\#} {h_3}^{0} \varepsilon (x_4 {h_3}^{1}(h_4 \cdot 1_A))\\
			&=&\varepsilon (x_2 ({{{h_2}^{0}}} \cdot 1_A) )x_1 \underline{\#} {{{h_1}^{0}}}  \otimes x_3 {{{h_1}^{1}}} {{h_2}^{1}} \underline{\#} {h_3}^{0} \varepsilon (x_4 {h_3}^{1}(h_4 \cdot 1_A))\\
			&=&x_1 \varepsilon_s ({{{h_2}^{0}}} \cdot 1_A) \underline{\#} {h_1}^{0} \otimes x_2 {{{h_1}^{1}}} {{h_2}^{1}} \underline{\#} {h_3}^{0} \varepsilon (x_3 {h_3}^{1}(h_4 \cdot 1_A))\\
			&=& x_1 \varepsilon_s ({{h_1}^{0}}_2 \cdot 1_A) \underline{\#} {{h_1}^{0}}_1 \otimes x_2 {{{h_1}^{1}}} \underline{\#} {h_2}^{0} \varepsilon (x_3 {h_2}^{1}(h_3 \cdot 1_A))\\
			&=& x_1 \varepsilon_s ({{h_1}^{0}}_3 \cdot 1_A) ({{h_1}^{0}}_1 \cdot 1_A)\# {{h_1}^{0}}_2 \otimes x_2 {{{h_1}^{1}}} \underline{\#} {h_2}^{0} \varepsilon (x_3 {h_2}^{1}(h_3 \cdot 1_A))\\
			&=&x_1  ({{h_1}^{0}}_1 \cdot 1_A) \varepsilon_s ({{h_1}^{0}}_2 \cdot 1_A)\# {{h_1}^{0}}_3 \otimes x_2 {{{h_1}^{1}}} \underline{\#} {h_2}^{0}  \varepsilon (x_3 {h_2}^{1}(h_3 \cdot 1_A))\\
			&=&x_1  ({{h_1}^{0}}_1 \cdot 1_A)\# {{h_1}^{0}}_2 \otimes x_2 {{{h_1}^{1}}} \underline{\#} {h_2}^{0} \varepsilon (x_3 {h_2}^{1}(h_3 \cdot 1_A))\\
			&=&x_1 \underline{\#} {{h_1}^{0}} \otimes x_2 {{{h_1}^{1}}} \underline{\#} {h_2}^{0} \varepsilon (x_3 {h_2}^{1}(h_3 \cdot 1_A)).
		\end{eqnarray*}
		Therefore, $\Delta( \overline{x \underline{\#} h}) = \overline{x_1 \underline{\#} {h_1}^{0}} \otimes \overline{x_2 {h_1}^{1} \underline{\#} h_2} \in \overline{A \underline{\#} H} \otimes \overline{A \underline{\#} H}$.
	\end{proof}
	
	Now, in order to prove that $\overline{A \underline{\#} H}$ has a counit, we observe the following.
	
	\begin{rem} 
		Considering the equalities in the proof of Proposition \ref{copro_barrabaixo}, we conclude, by applying  $id\otimes id\otimes id\otimes\varepsilon$, that
		\begin{equation}\label{**}
			x_1 ( {{h_1}^{0}} \cdot 1_A) \# {{h_2}^{0}} \otimes x_2 {h_1}^{1}{h_2}^{1}({h_3} \cdot 1_A) = x_1 {(h_1 \cdot 1_A)}_{1} \# {h_2}^{0} \otimes x_2 {(h_1 \cdot 1_A)}_{2}{h_2}^{1}.
		\end{equation}
		Besides that, applying $(\varepsilon \otimes \varepsilon \otimes id)$ in  the above equality, and considering $x=1_A$ and $H$ cocommutative, we obtain:
		\begin{equation}\label{triangulo}
			\varepsilon ( {{h_1}^{0}}) ( {{h_2}} \cdot 1_A) {h_1}^1 = \varepsilon_t ({h_1}^0  \cdot 1_A) {h_1}^{1} (h_2 \cdot 1_A).
		\end{equation}
	\end{rem}

	\begin{pro}
		The subspace $\overline{A \underline{\#} H}$ is a coalgebra.
	\end{pro}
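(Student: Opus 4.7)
The plan is to verify the three coalgebra axioms for $(\overline{A \underline{\#} H}, \Delta, \varepsilon)$. Stability of $\Delta$ on the subspace is exactly Proposition \ref{copro_barrabarra}, and coassociativity then restricts down from Proposition \ref{copro_barrabaixo}. So the only real work is in verifying that $\varepsilon$ is a two-sided counit on $\overline{A \underline{\#} H}$.

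For the right counit $(id \otimes \varepsilon)\Delta = id$ on $\overline{A \underline{\#} H}$, I would mimic the projection argument made after Proposition \ref{delta}. Set $\pi = (id \otimes \varepsilon)\Delta \colon A \underline{\#} H \to A \underline{\#} H$; its image is exactly $\overline{A \underline{\#} H}$, so the right counit axiom on the subspace is equivalent to $\pi$ being idempotent. Using coassociativity of $\Delta$ on $A \underline{\#} H$, one gets $\Delta \circ \pi = (id \otimes \pi)\Delta$, whence $\pi^2 = (id \otimes \varepsilon\pi)\Delta$, so idempotence reduces to the identity $\varepsilon \circ \pi = \varepsilon$ on $A \underline{\#} H$. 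This last identity is a short consequence of \eqref{4.14} --- which yields $\varepsilon(x_1 a)\varepsilon(x_2 b) = \varepsilon(x \varepsilon_t(a) b)$ --- combined with (CC1) and the weak bialgebra identity $\varepsilon(h_1)\varepsilon(h_2) = \varepsilon(h)$. Once $\pi^2 = \pi$ is in hand, for $\overline{x \underline{\#} h} = \pi(x \underline{\#} h)$ we immediately obtain $(id \otimes \varepsilon)\Delta(\overline{x \underline{\#} h}) = \pi(\overline{x \underline{\#} h}) = \overline{x \underline{\#} h}$.

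For the left counit, I would compute directly. Using Proposition \ref{copro_barrabarra} and the identity $\varepsilon \pi = \varepsilon$ established in the previous step,
\[
(\varepsilon \otimes id)\Delta(\overline{x \underline{\#} h}) = \varepsilon(x_1({h_1}^0 \cdot 1_A))\, \overline{x_2 {h_1}^1 \underline{\#} h_2},
\]
and applying \eqref{4.14} to absorb the scalar into the first tensor factor yields $\overline{x\, \varepsilon_t({h_1}^0 \cdot 1_A)\,{h_1}^1 \underline{\#} h_2}$. Expanding the outer $\underline{\#}$ brings in a factor $(h_2 \cdot 1_A)$, at which point the identity \eqref{triangulo} --- isolated in the remark just before the proposition precisely for this purpose --- rewrites $\varepsilon_t({h_1}^0 \cdot 1_A){h_1}^1(h_2 \cdot 1_A)$ as $\varepsilon({h_1}^0)(h_2 \cdot 1_A){h_1}^1$. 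The remaining simplifications, using the explicit formula for $\overline{x \underline{\#} h}$ derived immediately before Proposition \ref{copro_barrabarra}, the comodule-coalgebra axioms (CC1)--(CC4), Lemma \ref{lema_basic}, and the compatibility conditions of Definition \ref{weak matched pair}, should collapse the expression back to $\overline{x \underline{\#} h}$.

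The main obstacle I anticipate is this final chain of rewrites. The formula for $\overline{x \underline{\#} h}$ involves a nested $\varepsilon_s$ of products of ${h_i}^1$ terms with $(h_3 \cdot 1_A)$, and these pieces must be correctly matched across several applications of the coaction axioms and of the multiplicativity properties of $\varepsilon_s$ on terms of the form $h \cdot 1_A$. The prominent placement of \eqref{triangulo} in the remark preceding the statement strongly suggests the calculation has been anticipated, and that once that identity is invoked the remainder should close by routine bookkeeping with the axioms of a compatible weak matched pair.
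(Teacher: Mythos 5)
Your framing (stability and coassociativity from Propositions \ref{copro_barrabarra} and \ref{copro_barrabaixo}, so that only the two counit identities need checking) agrees with the paper, but the pivot of your right-counit argument is a false identity. You reduce idempotence of $\pi=(id\otimes\varepsilon)\Delta$ to the claim $\varepsilon\circ\pi=\varepsilon$ on $A\underline{\#}H$; the implication $\varepsilon\pi=\varepsilon\Rightarrow\pi^2=\pi$ is sound, but the premise fails for a general compatible weak matched pair. Indeed $\varepsilon(\pi(x\underline{\#}h))=\varepsilon\bigl(x({h_1}^0\cdot 1_A){h_1}^1(h_2\cdot 1_A)\bigr)$ while $\varepsilon(x\underline{\#}h)=\varepsilon\bigl(x(h\cdot 1_A)\bigr)$, and these differ already in the setting of Example \ref{ex_smashlambdaz}: take $H=A$ a disjoint union of abelian group algebras, acting via $\lambda$ and coacting via $z=1_{H_j}$; then $\varepsilon(\pi(x\underline{\#}h))=\lambda(h)\varepsilon(xz)$ whereas $\varepsilon(x\underline{\#}h)=\lambda(h)\varepsilon(x)$, and $\varepsilon(xz)=0\neq\varepsilon(x)$ for $x=1_{H_k}$ with $k\neq j$. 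What actually holds is only the weaker statement $(id\otimes(\varepsilon\circ\pi))\Delta=(id\otimes\varepsilon)\Delta$ --- the discrepancy between $\varepsilon\pi$ and $\varepsilon$ is absorbed by the first tensor leg of $\Delta$, not pointwise --- and proving that weak statement is precisely the long direct computation the paper carries out for $(id\otimes\varepsilon)\Delta(\overline{x\underline{\#}h})=\overline{x\underline{\#}h}$. So your reduction does not shortcut the work; it replaces a true weak identity by a false strong one.

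The same defect propagates into your left-counit sketch: its first step replaces $\varepsilon(\overline{x_1\underline{\#}{h_1}^0})$ by $\varepsilon\bigl(x_1({h_1}^0\cdot 1_A)\bigr)$, which again invokes $\varepsilon\pi=\varepsilon$. The paper instead retains the full expression $\varepsilon\bigl(x_1({{{h_1}^0}_1}^0\cdot 1_A){{{h_1}^0}_1}^1({{h_1}^0}_2\cdot 1_A)\bigr)$ and contracts it with the comodule-coalgebra axioms and Lemma \ref{lema_basic} before reaching the point where \eqref{4.14} and \eqref{triangulo} enter. From that point on your outline does track the paper's argument, but as written both halves of your proof rest on the unjustified (and generally false) identity $\varepsilon\circ\pi=\varepsilon$, so the proof has a genuine gap rather than being a simplification.
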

	
	\begin{proof}
		First, let's check that $\varepsilon$ is a counit em  $\overline{A \underline{\#} H}$.
		Indeed, recall that $\varepsilon(\overline{x \underline{\#} h})= \varepsilon (x ({h_1}^0 \cdot 1_A){h_1}^1(h_2 \cdot 1_A))$, and so
		\begin{eqnarray*}
			& & (id \otimes \varepsilon)\Delta (\overline{x \underline{\#} h})\\
			&=& \overline{x_1 \underline{\#} {h_1}^{0}}\varepsilon (\overline{x_2 {h_1}^{1} \underline{\#} h_2})\\
			&=& \overline{x_1 \underline{\#} {h_1}^{0}} \varepsilon (x_2 {h_1}^{1} ({h_2}^0\cdot 1_A) {h_2}^1 ({h_3} \cdot 1_A) )\\
			&=& x_1 \underline{\#} {{{h_1}^{0}}_1}^0 \varepsilon (x_2 {{{h_1}^{0}}_1}^1 ({{h_1}^{0}}_2\cdot 1_A)) \varepsilon(x_3 {h_1}^1 {h_2}^1 ({h_2}^0 \cdot 1_A) ({h_3} \cdot 1_A) )\\
			&=& x_1 \underline{\#} {{{h_1}^{00}}} \varepsilon (x_2 {{{h_1}^{01}}} ({{h_2}^{0}}\cdot 1_A)) \varepsilon(x_3 {h_1}^1 {h_2}^1 {h_3}^1 ({h_3}^0 \cdot 1_A) ({h_4} \cdot 1_A) )\\
			&=& x_1 \underline{\#} {h_1}^{0} \varepsilon (x_2 {{{h_1}^{1}}_1} ({{h_2}^{0}}\cdot 1_A)) \varepsilon(x_3 {{{h_1}^{1}}_2} {h_2}^1 {h_3}^1 ({h_3}^0 \cdot 1_A) ({h_4} \cdot 1_A) )\\
			&=& x_1 \underline{\#} {h_1}^{0}  \varepsilon(x_2 {h_1}^{1} {h_2}^1 {h_3}^1 ({h_2}^0 \cdot 1_A) ({h_3}^0 \cdot 1_A) ({h_4} \cdot 1_A) )\\
			&=& x_1 \underline{\#} {h_1}^{0}  \varepsilon(x_2 {h_1}^{1} {h_2}^1  ({{h_2}^0} \cdot 1_A) ({h_3} \cdot 1_A) )\\
			&=& x_1 \underline{\#} {{h_1}^{0}}_1  \varepsilon(x_2 {h_1}^{1}  ({{h_1}^{0}}_2 \cdot 1_A) ({h_2} \cdot 1_A) )\\
			&\stackrel{\eqref{4.15}}{=}&  x_1 \varepsilon_s ({{h_1}^{0}}_2 \cdot 1_A)  \underline{\#} {{h_1}^{0}}_1  \varepsilon(x_2 {h_1}^{1}  ({h_2} \cdot 1_A)) \\
			&=& x_1 \varepsilon_s ({{h_1}^{0}}_3 \cdot 1_A)  ({{h_1}^{0}}_1 \cdot 1_A)  \# {{h_1}^{0}}_2  \varepsilon(x_2 {h_1}^{1}  ({h_2} \cdot 1_A)) \\
			&\stackrel{\ref{lema_basic}(i)}{=}& x_1   ({{h_1}^{0}}_1 \cdot 1_A)  \# {{h_1}^{0}}_2  \varepsilon(x_2 {h_1}^{1}  ({h_2} \cdot 1_A)) \\
			&=& x_1  \underline{\#} {{h_1}^{0}}  \varepsilon(x_2 {h_1}^{1}  ({h_2} \cdot 1_A)) \\
			&=&\overline{x \underline{\#} h},
		\end{eqnarray*}
		and also
		\begin{eqnarray*}
			(\varepsilon \otimes id)\Delta (\overline{x \underline{\#} h})&=&\varepsilon( \overline{x_1 \underline{\#} {h_1}^{0}}) \overline{x_2 {h_1}^{1} \underline{\#} h_2}\\
			&=&\varepsilon(x_1 ({{{h_1}^{0}}_1}^0 \cdot 1_A) {{{h_1}^{0}}_1}^1({{h_1}^{0}}_2 \cdot 1_A) )\overline{x_2 {h_1}^{1} \underline{\#} h_2}\\
			&=&\varepsilon(x_1 ({{{h_1}^{00}}} \cdot 1_A) {{{h_1}^{01}}}({{h_2}^{0}} \cdot 1_A) )\overline{x_2 {h_1}^{1}{h_2}^{1} \underline{\#} h_3}\\
			&=&\varepsilon(x_1 ({{{h_1}^{0}}} \cdot 1_A) {{{h_1}^{1}}_1}({{h_2}^{0}} \cdot 1_A) )\overline{x_2 {{{h_1}^{1}}_2}{h_2}^{1} \underline{\#} h_3}\\
			&=&\overline{x {{h_1}^{1}}{h_2}^{1} \varepsilon_t( ({{{h_1}^{0}}} \cdot 1_A)({{h_2}^{0}} \cdot 1_A) ) \underline{\#} h_3}\\
			&=&\overline{x {{h_1}^{1}} \varepsilon_t( ({{{h_1}^{0}}_1} \cdot 1_A)({{{h_1}^{0}}_2} \cdot 1_A) ) \underline{\#} h_2}\\
			&=&\overline{x {{h_1}^{1}} \varepsilon_t( ({{{h_1}^{0}}} \cdot 1_A)\underline{\#} h_2}\\
			&=& (id \otimes \varepsilon) \Delta (x {{h_1}^{1}} \varepsilon_t( {{{h_1}^{0}}} \cdot 1_A)\underline{\#} h_2)\\
			&=& (id \otimes \varepsilon) \Delta (x {{h_1}^{1}} \varepsilon_t({{{h_1}^{0}}} \cdot 1_A) (h_2 \cdot 1_A )\# h_3)\\
			&\stackrel{(\ref{triangulo})}{=}& (id \otimes \varepsilon) \Delta (x {{h_1}^{1}} \varepsilon({h_1}^{0}) (h_2 \cdot 1_A )\# h_3)\\
			&=&\overline{x \varepsilon({h_1}^0){h_1}^1 \underline{\#} h_2}\\
			&=&\overline{x \varepsilon_s({h_1}^1) \underline{\#} h_2 \varepsilon({h_1}^0)}\\
			&=&x_1 \varepsilon_s({h_1}^1)_1 \underline{\#} {h_2}^0 \varepsilon(x_2 \varepsilon_s({h_1}^1)_2  {h_2}^1 (h_3 \cdot 1_A))\varepsilon ({h_1}^0)\\
			&=&x_1 {1_A}_1 \underline{\#} {h_2}^0 \varepsilon(x_2 {1_A}_2 \varepsilon_s({h_1}^1)  {h_2}^1 (h_3 \cdot 1_A))\varepsilon ({h_1}^0)\\
			&=&x \varepsilon_s( \varepsilon_s({h_1}^1)  {h_2}^1 (h_3 \cdot 1_A)) \underline{\#} {h_2}^0 \varepsilon ({h_1}^0)\\
			&=&x \varepsilon_s({h_1}^1 {h_2}^1 (h_3 \cdot 1_A)) \underline{\#} {h_2}^0 \varepsilon ({h_1}^0)\\
			&=&x \varepsilon_s({h_1}^1 (h_2\cdot 1_A)) \underline{\#} {{h_1}^0}_2 \varepsilon ({{h_1}^0}_1)\\
			&=& \overline{x \underline{\#} h}.
		\end{eqnarray*}
		Therefore, by Proposition \ref{copro_barrabarra}$, \overline{A\underline{\#}H}$ is a coalgebra.
	\end{proof}
	
	To verify that $\overline{A\underline{\#}H}$ is also a subalgebra of $A\underline{\#}H$, we observe that the order in which we perform the projections does not matter. 
	
	\begin{rem} \label{obs_ordem} Denote by $\underline{x\overline{\#}h}=((id \otimes\varepsilon)\Delta(x\#h))(1_A\# 1_H)$. Then,
		$\overline{x\underline{\#}h}=\underline{x\overline{\#}h}$.
		Indeed,
		\begin{eqnarray*}
			\overline{x\underline{\#}h}&=& x_1 ({{h_1}^0}\cdot 1_A) \#  {{h_2}^0}\varepsilon(x_2{h_1}^1 {h_2}^1(h_3\cdot 1_A))\\
			&\stackrel{\eqref{**}}{=}& x_1(h_1\cdot 1_A)_1\# {h_2}^0\varepsilon(x_2(h_1\cdot 1_A)_2{h_2}^1)\\
			&=& x_1(h_1\cdot 1_A)_1\varepsilon(x_2(h_1\cdot 1_A)_2)\# {h_2}^0\varepsilon(x_3{h_2}^1)\\
			&=& x_1(h_1\cdot 1_A)\# {h_2}^0\varepsilon(x_2{h_2}^1)\\
			&=& x_1({h^0}_1\cdot 1_A)\# {{h^0}_2}^0\varepsilon(x_2{{h^0}_2}^1)\varepsilon(h^1)\\
			&=& x_1({h_1}^0\cdot 1_A)\# {h_2}^{00}\varepsilon(x_2{h_2}^{01})\varepsilon({h_1}^1{h_2}^1)\\
			&=& x_1({h_1}^0\cdot 1_A)\# {h_2}^{0}\varepsilon(x_2{{h_2}^{1}}_1)\varepsilon({h_1}^1{{h_2}^1}_2)\\
			&=& x_1({h_1}^0\cdot 1_A)\# {h_2}^{0}\varepsilon(x_2{h_1}^{1}{h_2}^1)\\
			&=& x_1({h^0}_1\cdot 1_A)\# {h^0}_2\varepsilon(x_2{h}^{1})\\
			&=& (x_1\varepsilon(x_2{h}^{1})\# h^0)(1_A\# 1_H)\\
			&=& ((id \otimes\varepsilon)\Delta(x\#h))(1_A\# 1_H).
		\end{eqnarray*}
		
		Therefore, we write the element $\overline{x\underline{\#}h}=\underline{x\overline{\#}h}$ simply by $x\due h$ and $\overline{A\underline{\#}H}$ for $A\due H$.
		Furthermore, it is sometimes convenient to handle writing one projection explicitly, \emph{i.e.} $x\due h = x_1 \varepsilon(x_2 h^1) \underline{\#} h^0 = x(h_1 \cdot 1_A) \overline{\#} h_2$.
	\end{rem}
	
	The next remark will be very useful for the proof of some results in the following, it has some identities for the product of elements in the argument of the map $\varepsilon$.
	\begin{rem}
		Since $A$ is commutative, by equality \eqref{4.6}, for $x,y$ and $z \in A$, we get 
		\begin{equation*}
			\varepsilon(xyz)=\varepsilon(\varepsilon_s(x)yz)=\varepsilon(\varepsilon_s(xy)z)=\varepsilon(\varepsilon_s(xyz))=\varepsilon(x\varepsilon_s(yz))=\varepsilon(xy\varepsilon_s(z)).
		\end{equation*}
	\end{rem}
	
	\begin{pro}
		$A\due H$ is an algebra, with unity $1_A\due 1_H$.
	\end{pro}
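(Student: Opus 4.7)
The plan is to establish three things: closure of the multiplication on $A\due H$, two-sided unitality of $1_A\due 1_H$, and associativity. Associativity will be inherited directly from the associative algebra $A\underline{\#}H$ (proved above), so only closure and the unit identities require genuine argument.

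For closure, I start with two elements $x\due h,\ y\due g\in A\due H$ and compute $(x\due h)(y\due g)$ using the product inherited from $A\underline{\#}H$, namely $(x\underline{\#}h)(y\underline{\#}g)=x(h_1\cdot y)\underline{\#}h_2g$. I use Remark \ref{obs_ordem} to switch between the representations $x\due h=x_1\varepsilon(x_2h^1)\underline{\#}h^0$ and $x\due h=x(h_1\cdot 1_A)\overline{\#}h_2$ as convenient. The key step is to apply the compatibility condition \ref{novo weak matched pair}(iii), in the form of Remark \ref{troca}, to the Sweedler component $(h_1\cdot y)_1\otimes(h_1\cdot y)_2(h_2g_1)^1\otimes(h_2g_1)^0$ that appears when I unfold $\overline{-}$ after multiplying. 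Having expanded in this way, I aim to bring everything back into a single $\overline{w\underline{\#}k}$ form, using Definition \ref{novo weak matched pair}(iv), Lemma \ref{lema_basic}, and the commutativity of $A$ to collapse the various $\varepsilon_s$-factors that arise.

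For the unit, I compute $(1_A\due 1_H)(x\due h)$ and $(x\due h)(1_A\due 1_H)$. Writing $1_A\due 1_H=\overline{1_A\underline{\#}1_H}=1_{A1}\varepsilon(1_{A2}1_H^1)\underline{\#}1_H^0$, the expression for the product becomes a sum involving $\rho(1_H)$ and $\Delta(1_H)$ coupled together, and the natural tool to collapse it is precisely Definition \ref{novo weak matched pair}(v), which is designed to compare $\rho(1_{H1})\otimes 1_{H2}$ with a pure tensor built from $\Delta(1_H)$ and $\rho(1_{H})$. Combined with (MA1), (MA4) and Lemma \ref{lema_basic}(iii)--(iv), this should reduce the result to $x\due h$.

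The main obstacle will be closure, because $\due$ carries a double projection and the expression $(x\due h)(y\due g)$ mixes $\Delta_A$, $\rho$ and the action $\cdot$ in a nontrivial way; the book-keeping for the Sweedler indices is heavy, and in several steps one has to recognize identities such as \eqref{**} and \eqref{triangulo} to merge or split factors of the form $(h\cdot 1_A)$. Once the product is shown to land in $A\due H$ and $1_A\due 1_H$ is verified to act as a unit on both sides, the algebra structure follows immediately from the fact that $A\underline{\#}H$ is already an associative algebra.
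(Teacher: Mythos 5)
Your plan for closure is essentially the paper's: the paper proves the identity $(x\due h)(y\due g)=x(h_1\cdot y)\due h_2 g$ by expanding $x(h_1\cdot y)\due h_2 g$, applying Remark \ref{troca} to the component $(h_1\cdot y)_1\otimes (h_1\cdot y)_2(h_2g_1)^1\otimes (h_2g_1)^0$, and then collapsing the resulting $\varepsilon_s$-factors with Definition \ref{novo weak matched pair}(iv), identity \eqref{4.15} and Lemma \ref{lema_basic}(iii), using the commutativity of $A$ throughout; it does not need \eqref{**} or \eqref{triangulo} for this step, and associativity is indeed inherited by exhibiting $A\due H$ as a subalgebra of $A\underline{\#}H$. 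Where you diverge is the unit. The paper never invokes Definition \ref{novo weak matched pair}(v) here: once the product formula is available, it simply writes
$(x\due h)(1_A\due 1_H)=x(h_1\cdot 1_A)\due h_2=\overline{x(h_1\cdot 1_A)\underline{\#}\,h_2}=\overline{(x\underline{\#}h)(1_A\underline{\#}1_H)}=\overline{x\underline{\#}h}=x\due h$,
using only Remark \ref{obs_ordem} and the fact, already established, that $1_A\underline{\#}1_H$ is the two-sided unit of $A\underline{\#}H$; the left-unit computation is analogous. Your proposed route --- expanding $1_A\due 1_H$ into $\rho(1_H)$ and $\Delta(1_H)$ coupled together and collapsing with axiom (v) --- is a substantially heavier computation whose successful termination is not evident from the plan, and it is unnecessary: in the paper, axiom (v) is reserved for verifying the weak-bialgebra condition on $\Delta(1_A\due 1_H)$ in Theorem \ref{principal}, not for unitality. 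I suggest you replace the unit step by the projection argument above, which buys a one-line proof; otherwise your outline is sound.
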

	\begin{proof} Let $(x\due h), (y\due g)\in A\due H$. First, we note that $(x\due h)(y\due g)=x(h_1\cdot y)\due h_2g$. Indeed,
		\begin{eqnarray*}
			& \ & x(h_1\cdot y)\due h_2g \\
			&=& x_1(h_1\cdot y)_1\underline{\#}(h_2g_1)^0\varepsilon(x_2(h_1\cdot y)_2(h_2g_1)^1(h_3g_2\cdot 1_A))\\
			&\stackrel{\ref{troca}}{=}& x_1({h_1}^0\cdot y_1)\underline{\#} {h_2}^0{g_1}^0\varepsilon(x_2{h_1}^1{h_2}^1(h_3\cdot y_2{g_1}^1)\varepsilon_s(h_4\cdot(g_2\cdot 1_A)))\\
			&\stackrel{\ref{novo weak matched pair}(iv)}{=}& x_1({h_1}^0\cdot y_1)\underline{\#} {h_2}^0{g_1}^0\varepsilon(x_2{h_1}^1{h_2}^1(h_3\cdot y_2{g_1}^1)\varepsilon_s(h_4\cdot 1_A)\varepsilon_s(g_2\cdot 1_A))\\
			&=& x_1({h_1}^0\cdot y_1)\underline{\#} {h_2}^0{g_1}^0\varepsilon(x_2{h_1}^1{h_2}^1(h_3\cdot y_2{g_1}^1)(g_2\cdot 1_A))\\
			&=& x_1({{h_1}^0}_1\cdot y_1)\underline{\#} {{h_2}^0}_2{g_1}^0\varepsilon(x_2{h_1}^1(h_2\cdot y_2{g_1}^1)(g_2\cdot 1_A))\\
			&\stackrel{\eqref{4.15}}{=}&x\varepsilon_s({h_1}^1(h_2\cdot y_2{g_1}^1)(g_2\cdot 1_A))({{h_1}^0}_1\cdot y_1)\underline{\#} {{h_2}^0}_2{g_1}^0\\
			&\stackrel{\ref{novo weak matched pair}(iv)}{=}& x\varepsilon_s({h_1}^1)\varepsilon_s(h_2\cdot 1_A)\varepsilon_s(y_2{g_1}^1(g_2\cdot 1_A))({{h_1}^0}_1\cdot y_1)\underline{\#} {{h_2}^0}_2{g_1}^0\\
			&\stackrel{\eqref{4.15}}{=}&\varepsilon_s({h_1}^1(h_2\cdot 1_A))x_1 ({{h_1}^0}_1\cdot y_1\varepsilon(x_2y_2{g_1}^1(g_2\cdot 1_A)))\underline{\#} {{h_2}^0}_2{g_1}^0\\
			&\stackrel{\eqref{4.15}}{=}&\varepsilon_s({h_1}^1(h_2\cdot 1_A))x_1 ({{h_1}^0}_1\cdot y\varepsilon_s(x_2)\varepsilon_s({g_1}^1(g_2\cdot 1_A)))\underline{\#} {{h_2}^0}_2{g_1}^0\\
			&=& \varepsilon_s({h_1}^1(h_2\cdot 1_A))x_1\varepsilon(x_21_{A2})({{h_1}^0}_1\cdot 1_{A1}y\varepsilon_s({g_1}^1(g_2\cdot 1_A)))\underline{\#} {{h_2}^0}_2{g_1}^0\\
			&\stackrel{\eqref{4.15}}{=}& \varepsilon_s({h_1}^1(h_2\cdot 1_A))x\varepsilon_s(1_{A2})({{h_1}^0}_1\cdot 1_{A1})({{h_1}^0}_2\cdot y\varepsilon_s({g_1}^1(g_2\cdot 1_A)))\underline{\#} {{h_2}^0}_3{g_1}^0\\
			&\stackrel{\ref{lema_basic}(iii)}{=}& \varepsilon_s({h_1}^1(h_2\cdot 1_A))x1_{A1}\varepsilon_s(1_{A2})({{h_1}^0}_1\cdot 1_A)({{h_1}^0}_2\cdot y\varepsilon_s({g_1}^1(g_2\cdot 1_A)))\underline{\#} {{h_2}^0}_3{g_1}^0\\
			&=& \varepsilon_s({h_1}^1(h_2\cdot 1_A))x ({{h_1}^0}_1\cdot y\varepsilon_s({g_1}^1(g_2\cdot 1_A)))\underline{\#} {{h_2}^0}_2{g_1}^0\\
			&\stackrel{\eqref{4.15}}{=}& x_1 ({{h_1}^0}_1\cdot y_1)\underline{\#} {{h_2}^0}_2{g_1}^0 \varepsilon(x_2{h_1}^1(h_2\cdot 1_A))\varepsilon(y_2{g_1}^1(g_2\cdot 1_A))\\
			&=& (x_1\underline{\#} {h_1}^0)(y_1\underline{\#} {g_1}^0) \varepsilon(x_2{h_1}^1(h_2\cdot 1_A))\varepsilon(y_2{g_1}^1(g_2\cdot 1_A))\\
			&=& (x\due h)(y\due g).
		\end{eqnarray*}
		
		Now, we verify that $1_A \due 1_H$ is the unity. Indeed,
		\begin{eqnarray*}
			(x\due h)(1_A\due 1_H)=x(h_1\cdot 1_A)\due h_2= \overline{x(h_1\cdot 1_A)\underline{\#} h_2}=\overline{(x\underline{\#} h)(1_A\underline{\#} 1_H)}=\overline{x\underline{\#} h}.
		\end{eqnarray*}
		Analogously, $(1_A\due 1_H)(x\due h)=x\due h$. 
		Thus, $A \due H$ is a subalgebra of $A\underline{\#}H$.
	\end{proof}
	
	Then, we conclude that $A\due H$ is a weak bialgebra, as follows.
	
	\begin{thm}\label{principal}
		Let $(H,A)$ be a compatible weak matched pair. Then, $A\overline{\underline{\#}} H$ is a weak bialgebra with the following structure:
		\begin{enumerate}
			\item[(i)] Multiplication: $(x\overline{\underline{\#}} h)(y \overline{\underline{\#}} g)= x(h_1\cdot y)\overline{\underline{\#}} h_2 g$,
			\item[(ii)] Unity: $1_A \overline{\underline{\#}} 1_H$,
			\item[(iii)] Comultiplication: $\Delta(x\overline{\underline{\#}}h)=x_1 \overline{\underline{\#}} {h_1}^0 \otimes x_2 {h_1}^1 \overline{\underline{\#}} h_2$,
			\item[(iv)] Counit: $\varepsilon(x\overline{\underline{\#}} h)= \varepsilon(x({h_1}^0\cdot 1_A){h_1}^1(h_2\cdot 1_A))$,
		\end{enumerate}
		for all $x, y\in A$ and $h,g\in H$.
	\end{thm}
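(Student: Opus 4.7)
The algebra and coalgebra structures on $A\due H$, together with the explicit formulas in items (i)--(iv) of the statement, have been established in the propositions preceding this theorem. Thus the plan is to verify only the three weak bialgebra axioms of Definition \ref{def_weakbialgebra}: multiplicativity of $\Delta$, weak multiplicativity of $\varepsilon$ on triple products, and the weak unit axiom for $\Delta(1_{A\due H})$.

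For the multiplicativity $\Delta(\alpha\beta)=\Delta(\alpha)\Delta(\beta)$, the plan is to bootstrap from Proposition \ref{delta_mult}, which already establishes this identity on $A\# H$. Using Remark \ref{obs_ordem} to commute the projections defining $A\due H$, the expansions of $\Delta((x\due h)(y\due g))$ via $(x\due h)(y\due g) = x(h_1\cdot y)\due h_2g$ and of $\Delta(x\due h)\Delta(y\due g)$ via the coproduct formula reduce to the same manipulations as in $A\# H$: the comodule coalgebra axiom (CC2) applied to $h_1$, the module algebra axiom (MA3), and cocommutativity of $H$ together with commutativity of $A$ to rearrange factors. For the counit identities $\varepsilon(\alpha\beta\gamma)=\varepsilon(\alpha\beta_1)\varepsilon(\beta_2\gamma)=\varepsilon(\alpha\beta_2)\varepsilon(\beta_1\gamma)$, the plan is a direct computation: iterate the product formula to express $\alpha\beta\gamma$ as a single element of $A\due H$, apply $\varepsilon(u\due v)=\varepsilon(u(v\cdot 1_A))$, and use the weak bialgebra counit axiom in both $A$ and $H$. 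The commutativity of $A$, multiplicativity of $\varepsilon_s$ on $A$, and condition \ref{novo weak matched pair}(iv) should collapse the two split versions to the same expression.

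The hardest step will be axiom (iii), the weak unit axiom. Writing $1=1_A\due 1_H$, we must verify
\begin{equation*}
(1\otimes\Delta(1))(\Delta(1)\otimes 1)=(\Delta(1)\otimes 1)(1\otimes\Delta(1))=(\Delta\otimes\mathrm{id})\Delta(1).
\end{equation*}
The plan is to expand $\Delta(1_A\due 1_H)=1_{A1}\due {1_{H1}}^0\otimes 1_{A2}{1_{H1}}^1\due 1_{H2}$ and iterate. Condition (v) of Definition \ref{novo weak matched pair}, which reads $\rho(1_{H1})\otimes 1_{H2}=1_{H1}\otimes\varepsilon({1_{H1}'}^0){1_{H1}'}^1\otimes 1_{H2}1_{H2}'$, is precisely the compatibility needed to identify $(\Delta\otimes\mathrm{id})\Delta(1_A\due 1_H)$ with both of the product-side expressions. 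The main obstacle lies in simultaneously tracking four pieces of data (the splittings of $\Delta(1_A)$ and $\Delta(1_H)$, the action of $H$ on $1_A$, and the coaction of $A$ on $1_H$); the plan is to unwind this using Lemma \ref{lema_basic}, in particular parts (iii) and (iv), to exchange expressions of the form $h\cdot 1_{A1}\otimes 1_{A2}$ with $(h\cdot 1_A)1_{A1}\otimes 1_{A2}$ and to reduce $h\cdot 1_A$ to a product of $\varepsilon_s$-images, combined with the weak unit axioms already holding in $A$ and $H$. Once the three axioms are checked, $A\due H$ is a weak bialgebra with the stated structure.
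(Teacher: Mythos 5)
Your overall architecture matches the paper's for the first two axioms: both defer to the preceding propositions for the algebra and coalgebra structures and obtain multiplicativity of $\Delta$ on $A\due H$ by restricting Proposition \ref{delta_mult}, and both check the weak counit identity $\varepsilon(\alpha\beta\gamma)=\varepsilon(\alpha\beta_1)\varepsilon(\beta_2\gamma)=\varepsilon(\alpha\beta_2)\varepsilon(\beta_1\gamma)$ by direct expansion using $\varepsilon(u\due v)=\varepsilon\bigl(u(v_1{}^0\cdot 1_A)v_1{}^1(v_2\cdot 1_A)\bigr)$, Remark \ref{troca}, Definition \ref{novo weak matched pair}(iv) and the $\varepsilon_s$-identities available in a commutative $A$. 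Where you genuinely diverge is on the weak unit axiom. You propose to compute $\Delta^2(1_A\due 1_H)$ and the two products $(1\otimes\Delta(1))(\Delta(1)\otimes 1)$ and $(\Delta(1)\otimes 1)(1\otimes\Delta(1))$ head-on; the paper instead invokes Proposition \ref{equivalencia_doum} and verifies the equivalent elementwise identities
\begin{equation*}
(x\due h)_1\otimes\varepsilon_t\bigl((x\due h)_2\bigr)=\Delta(1_A\due 1_H)\,(x\due h\otimes 1_A\due 1_H),
\end{equation*}
together with the analogous one for $\varepsilon_s'$. Both routes prove the same thing, and your plan correctly identifies condition (v) of Definition \ref{novo weak matched pair} and Lemma \ref{lema_basic} as the key inputs, but the paper's reduction buys something concrete: the free variable $x\due h$ lets one expand $\varepsilon_t$ through $\varepsilon$ and then reuse \eqref{4.6}, \eqref{4.12}, \eqref{4.15} and Definition \ref{novo weak matched pair}(iv)--(v) to shuttle $\varepsilon_s$-factors around, whereas your direct computation must track a twice-iterated coaction on $1_H$ acting against the legs of $\Delta(1_A)$ with no free variable to absorb the bookkeeping, so expect it to be at least as long as the paper's already lengthy calculation. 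Note also that the equivalence of Proposition \ref{equivalencia_doum} is only usable once axioms (i) and (ii) of Definition \ref{def_weakbialgebra} are in place, which is the order the paper (and your plan) follows, so there is no logical gap in either route --- only more unassisted computation in yours.
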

	
	\begin{proof} We already know that $A\due H$ is an algebra and a coalgebra and, by Proposition \ref{delta_mult}, the map $\Delta$ is multiplicative. Then, it is enough to verify the properties of $\varepsilon$ and $\Delta(1_A\due 1_H)$, \emph{i.e.}, items (ii) and (iii) of the Definition \ref{def_weakbialgebra}:
		
		\medskip
		
		Let $x \due h, y \due g, z \due t\in A\overline{\underline{\#}} H$. Then,
		\begin{eqnarray*}
			& \ &\varepsilon\left((x \due h)(y \due g)(z \due t)\right)\\
			&=&\varepsilon\left(\underline{\overline{(x \# h)(y \# g)(z \# t)}}\right)\\
			&=&\varepsilon\left(\underline{\overline{(x(h_1 \cdot y) \# h_2g)}} \underline{\overline{(z \# t)}} \right)\\
			&=&\varepsilon\left(x(h_1 \cdot y) (h_2g \cdot z) \due h_3g_2t\right)\\
			&=&\varepsilon(x(h_1 \cdot y) (h_2g_1 \cdot z) ((h_3g_2t_1)^0 \cdot 1_A) (h_3g_2t_1)^1 (h_4g_3t_2 \cdot 1_A))\\
			&=&\varepsilon(x ((h_2g_2t_1)^0 \cdot 1_A) (h_1 \cdot y)_2 (h_2g_2t_1)^1 (h_3g_1 \cdot z)   (h_4g_3t_2 \cdot 1_A))\varepsilon((h_1 \cdot y)_1)\\
			&\stackrel{\ref{troca}}{=}&\varepsilon(x ({h_2}^0 (g_2t_1)^0 \cdot 1_A) {h_1}^1{h_2}^1 (h_3 \cdot y_2) (h_4 \cdot (g_2t_1)^1) (h_5g_1 \cdot z)   (h_6g_3t_2 \cdot 1_A)) \\
			& \ & \varepsilon({h_1}^0 \cdot y_1)\\
			&=&\varepsilon(x ({h_2}^0 (g_2t_1)^0 \cdot 1_A) {h_1}^1{h_2}^1 (h_3 \cdot y_2) (h_4 \cdot (g_2t_1)^1 (g_1 \cdot z)) (h_5g_3t_2 \cdot 1_A))\\
			& \ & \varepsilon({h_1}^0 \cdot y_1)\\
			&=&\varepsilon(x ({h_2}^0 (g_2t_1)^0 \cdot 1_A) {h_1}^1{h_2}^1 (h_3 \cdot y_2) (h_4 \cdot (g_2t_1)^1 (g_1 \cdot z)_2) (h_5g_3t_2 \cdot 1_A)) \\
			& \ &\varepsilon({h_1}^0 \cdot y_1) \varepsilon((g_1 \cdot z)_1)\\
			&\stackrel{\ref{troca}}{=}&\varepsilon(x ({h_2}^0 {g_2}^0 {t_1}^0 \cdot 1_A) {h_1}^1{h_2}^1 (h_3 \cdot y_2) (h_4 \cdot {g_1}^1 {g_2}^1 (g_3 \cdot z_2{t_1}^1)) (h_5g_4t_2 \cdot 1_A))\\
			& \ & \varepsilon({h_1}^0 \cdot y_1) \varepsilon({g_1}^0 \cdot z_1)\\
			&\stackrel{(\ref{4.6})}{=}&\varepsilon(x \varepsilon_s({h_2}^0 {g_2}^0 {t_1}^0 \cdot 1_A) \varepsilon_s({h_1}^1{h_2}^1) \varepsilon_s(h_3 \cdot y_2) \varepsilon_s(h_4 \cdot ({g_1}^1 {g_2}^1 (g_3 \cdot z_2{t_1}^1))) \\
			& \ & \varepsilon_s(h_5g_4t_2 \cdot 1_A)) \varepsilon({h_1}^0 \cdot y_1) \varepsilon({g_1}^0 \cdot z_1)\\
			&\stackrel{\ref{novo weak matched pair}(iv)}{=}&\varepsilon(x \varepsilon_s({h_2}^0  \cdot 1_A) \varepsilon_s({g_2}^0  \cdot 1_A) \varepsilon_s({t_1}^0  \cdot 1_A) \varepsilon_s({h_1}^1{h_2}^1) \varepsilon_s(h_3 \cdot 1_A) \varepsilon_s(y_2)  \\
			&\ & \varepsilon_s(h_4 \cdot 1_A)  \varepsilon_s({g_1}^1 {g_2}^1) \varepsilon_s(g_3 \cdot 1_A) \varepsilon_s(z_2) \varepsilon_s({t_1}^1) \varepsilon_s(h_5 \cdot 1_A)\varepsilon_s(g_4 \cdot 1_A)  \\
			& \ & \varepsilon_s(t_2 \cdot 1_A)) \varepsilon({h_1}^0 \cdot y_1) \varepsilon({g_1}^0 \cdot z_1)\\
			&\stackrel{\ref{lema_basic}(ii)}{=}&\varepsilon(x \varepsilon_s({h_2}^0  \cdot 1_A) \varepsilon_s({g_2}^0  \cdot 1_A) \varepsilon_s({t_1}^0  \cdot 1_A) \varepsilon_s({h_1}^1{h_2}^1) \varepsilon_s({g_1}^1 {g_2}^1)   \varepsilon_s(y_2)   \\
			&\ &\varepsilon_s(h_3 \cdot 1_A)  \varepsilon_s(g_3 \cdot 1_A) \varepsilon_s(z_2) \varepsilon_s({t_1}^1)  \varepsilon_s(t_2 \cdot 1_A)) \varepsilon({h_1}^0 \cdot y_1) \varepsilon({g_1}^0 \cdot z_1)\\
			&\stackrel{\ref{novo weak matched pair}(iv)}{=}&\varepsilon(x \varepsilon_s({h_2}^0  \cdot y_2) \varepsilon_s({g_2}^0  \cdot z_2) \varepsilon_s({t_1}^0  \cdot 1_A) \varepsilon_s({h_1}^1{h_2}^1) \varepsilon_s({g_1}^1 {g_2}^1) \varepsilon_s({t_1}^1)     \\ 
			& \ & \varepsilon_s(h_3 \cdot 1_A) \varepsilon_s(g_3 \cdot 1_A)  \varepsilon_s(t_2 \cdot 1_A))
			\varepsilon({h_1}^0 \cdot y_1) \varepsilon({g_1}^0 \cdot z_1).
		\end{eqnarray*}
		
		On the other side,
		\begin{eqnarray*}
			& \ &\varepsilon \left((x \due h)(y \due g)_1 \right) \varepsilon \left((y \due g)_2(z \due t) \right)\\
			&=&\varepsilon \left( (x \due h)(y_1 \due {g_1}^{0}) \right) \varepsilon \left( (y_2{g_1}^{1} \due g_2)(z \due t) \right)\\
			&=&\varepsilon \left( x(h_1 \cdot y_1) \due h_2{g_1}^{0} \right) \varepsilon \left( y_2{g_1}^{1}(g_2 \cdot z) \due g_3t \right) \\
			&=&\varepsilon \left( x(h_1 \cdot y_1) ((h_2{{g_1}^{0}}_{1})^{0} \cdot 1_A) (h_2{{g_1}^{0}}_{1})^{1} (h_3{{g_1}^{0}}_{2} \cdot 1_A) \right)\\
			& \ & \varepsilon \left( y_2{g_1}^{1}(g_2 \cdot z) ((g_3t_1)^{0}\cdot 1_A) (g_3t_1)^{1}(g_4t_2 \cdot 1_A) \right)\\
			&=&\varepsilon((h_1 \cdot y_1)_1)\varepsilon(x (h_1 \cdot y_1)_2((h_2{{g_1}^{0}}_{1})^{0} \cdot 1_A) (h_2{{g_1}^{0}}_{1})^{1} (h_3{{g_1}^{0}}_{2} \cdot 1_A))\varepsilon((g_2\cdot z)_1)\\
			& \ &\varepsilon(y_2{g_1}^{1}(g_2 \cdot z)_2 ((g_3t_1)^{0}\cdot 1_A) (g_3t_1)^{1}(g_4t_2 \cdot 1_A))\\
			&\stackrel{\ref{troca}}{=}&\varepsilon({h_1}^{0} \cdot y_1)\varepsilon(x {h_1}^{1} {h_2}^{1} (h_3 \cdot y_2{{{g_1}^{0}}_{1}}^{1}) ({h_2}^{0} {{{g_1}^{0}}_{1}}^{0}\cdot 1_A) (h_4{{g_1}^{0}}_{2} \cdot 1_A))\varepsilon({g_2}^{0}\cdot z_1) \\
			& \ & \varepsilon(y_3{g_1}^{1}{g_2}^{1}{g_3}^{1}(g_4 \cdot z_2{t_1}^{1}) ({g_3}^{0}{t_1}^{0}\cdot 1_A)(g_5t_2 \cdot 1_A))\\
			&\stackrel{(\ref{4.6})}{=}&\varepsilon({h_1}^{0} \cdot y_1)\varepsilon(x \varepsilon_s({h_1}^{1} {h_2}^{1}) \varepsilon_s(h_3 \cdot y_2{{{g_1}^{0}}_{1}}^{1}) \varepsilon_s({h_2}^{0} \cdot 1_A) \varepsilon_s ({{{g_1}^{0}}_{1}}^{0}\cdot 1_A)\varepsilon_s(h_4 \cdot 1_A) \\
			& \ &\varepsilon_s({{g_1}^{0}}_{2}\cdot 1_A)) \varepsilon({g_2}^{0}\cdot z_1)\varepsilon(y_3{g_1}^{1}\varepsilon_s({g_2}^{1}{g_3}^{1})\varepsilon_s(g_4 \cdot z_2{t_1}^{1}) 
			\varepsilon_s({g_3}^{0}\cdot 1_A)\varepsilon_s({t_1}^{0}\cdot 1_A)\\
			& \ & \varepsilon_s(g_5 \cdot 1_A)\varepsilon_s(t_2 \cdot 1_A))\\
			&\stackrel{\ref{troca}}{=}&\varepsilon({h_1}^{0} \cdot y_1)\varepsilon(x \varepsilon_s({h_1}^{1} {h_2}^{1}) \varepsilon_s(h_3 \cdot 1_A) \varepsilon_s(y_2{{{g_1}^{0}}_{1}}^{1}) \varepsilon_s({h_2}^{0} \cdot 1_A) \varepsilon_s ({{{g_1}^{0}}_{1}}^{0}\cdot 1_A)\\
			& \ & \varepsilon_s(h_4 \cdot 1_A) \varepsilon_s({{g_1}^{0}}_{2}\cdot 1_A)) \varepsilon({g_2}^{0}\cdot z_1)\varepsilon(y_3{g_1}^{1}\varepsilon_s({g_2}^{1}{g_3}^{1})\varepsilon_s(g_4 \cdot 1_A) \varepsilon_s(z_2) \\
			& \ & \varepsilon_s({t_1}^{1}) 
			\varepsilon_s({g_3}^{0}\cdot 1_A)\varepsilon_s({t_1}^{0}\cdot 1_A)\varepsilon_s(g_5 \cdot 1_A)\varepsilon_s(t_2 \cdot 1_A))\\
			&=&\varepsilon({h_1}^{0} \cdot y_1)\varepsilon(x \varepsilon_s({h_1}^{1} {h_2}^{1}) \varepsilon_s(h_3 \cdot 1_A) \varepsilon_s(y_2{{g_1}^{01}}) \varepsilon_s({h_2}^{0} \cdot 1_A) \varepsilon_s ({{g_1}^{00}}\cdot 1_A) \\
			& \ &\varepsilon_s({{g_2}^{0}}\cdot 1_A)) \varepsilon({g_3}^{0}\cdot z_1)\varepsilon(y_3{g_1}^{1}{g_2}^{1}\varepsilon_s({g_3}^{1}{g_4}^{1})\varepsilon_s(g_5 \cdot 1_A) \varepsilon_s(z_2) \varepsilon_s({t_1}^{1}) 
			\\
			& \ & \varepsilon_s({g_4}^{0}\cdot 1_A) \varepsilon_s({t_1}^{0}\cdot 1_A)\varepsilon_s(t_2 \cdot 1_A))\\
			&=&\varepsilon({h_1}^{0} \cdot y_1)\varepsilon(x \varepsilon_s({h_1}^{1} {h_2}^{1}) \varepsilon_s(h_3 \cdot 1_A)  \varepsilon_s(y_2{{g_1}^{1}}_{1}) \varepsilon_s({h_2}^{0} \cdot 1_A) \varepsilon_s ({{g_1}^{0}}\cdot 1_A) \\
			& \ &\varepsilon_s({{g_2}^{0}}\cdot 1_A)) \varepsilon({g_3}^{0}\cdot z_1)\varepsilon(y_3{{g_1}^{1}}_2{g_2}^{1}\varepsilon_s({g_3}^{1}{g_4}^{1})\varepsilon_s(g_5 \cdot 1_A) \varepsilon_s(z_2) \varepsilon_s({t_1}^{1}) 
			\\
			& \ & \varepsilon_s({g_4}^{0}\cdot 1_A) \varepsilon_s({t_1}^{0}\cdot 1_A)\varepsilon_s(t_2 \cdot 1_A))\\
			&\stackrel{(\ref{4.6})}{=}&\varepsilon({h_1}^{0} \cdot y_1) \varepsilon({g_3}^{0}\cdot z_1) \varepsilon(x \varepsilon_s({h_1}^{1} {h_2}^{1}) \varepsilon_s(h_3 \cdot 1_A)  y_2{{g_1}^{1}} {{g_2}^{1}}\varepsilon_s({h_2}^{0} \cdot 1_A) \\ 
			& \ & \varepsilon_s ({{g_1}^{0}}\cdot 1_A) \varepsilon_s({{g_2}^{0}}\cdot 1_A)\varepsilon_s({g_3}^{1}{g_4}^{1}) \varepsilon_s(g_5 \cdot 1_A)  \varepsilon_s(z_2) \varepsilon_s({t_1}^{1}) 
			\varepsilon_s({g_4}^{0}\cdot 1_A)  \\
			& \ & \varepsilon_s({t_1}^{0}\cdot 1_A) \varepsilon_s(t_2 \cdot 1_A))\\
			&{=}&\varepsilon({h_1}^{0} \cdot y_1) \varepsilon({g_2}^{0}\cdot z_1) \varepsilon(x \varepsilon_s({h_1}^{1} {h_2}^{1}) \varepsilon_s(h_3 \cdot 1_A)  \varepsilon_s(y_2)\varepsilon_s({{g_1}^{1}})\varepsilon_s({h_2}^{0} \cdot 1_A) \\
			& \ & \varepsilon_s ({{g_1}^{0}}_1\cdot 1_A)  \varepsilon_s({{g_1}^{0}}_2\cdot 1_A) \varepsilon_s({g_2}^{1}{g_3}^{1}) \varepsilon_s(g_4 \cdot 1_A)  \varepsilon_s(z_2) \varepsilon_s({t_1}^{1}) 
			\varepsilon_s({g_3}^{0}\cdot 1_A) \\
			& \ & \varepsilon_s({t_1}^{0}\cdot 1_A)\varepsilon_s(t_2 \cdot 1_A))\\
			&{=}&\varepsilon({h_1}^{0} \cdot y_1) \varepsilon({g_1}^{0}\cdot z_1) \varepsilon(x \varepsilon_s({h_1}^{1} {h_2}^{1}) \varepsilon_s(h_3 \cdot 1_A)  \varepsilon_s({{g_1}^{1}}{{g_2}^{1}}{{g_3}^{1}})\varepsilon_s({h_2}^{0} \cdot y_2) \\
			& \ & \varepsilon_s ({{g_2}^{0}}\cdot 1_A) \varepsilon_s({{g_3}^{0}}\cdot 1_A) \varepsilon_s(g_4 \cdot 1_A)  \varepsilon_s(z_2) \varepsilon_s({t_1}^{1}) 
			\varepsilon_s({g_3}^{0}\cdot 1_A) \varepsilon_s({t_1}^{0}\cdot 1_A)\\
			& \ &\varepsilon_s(t_2 \cdot 1_A))\\
			&=&\varepsilon({h_1}^{0} \cdot y_1) \varepsilon({g_1}^{0}\cdot z_1) \varepsilon(x \varepsilon_s({h_1}^{1} {h_2}^{1}) \varepsilon_s(h_3 \cdot 1_A)  \varepsilon_s({{g_1}^{1}}{{g_2}^{1}})\varepsilon_s({h_2}^{0} \cdot y_2) \\
			& \ & \left. \varepsilon_s({{g_2}^{0}}\cdot 1_A)  \varepsilon_s({g_3}\cdot 1_A) \varepsilon_s({t_1}^{1}) 
			\varepsilon_s({t_1}^{0}\cdot 1_A)\varepsilon_s(t_2 \cdot 1_A) \right).
		\end{eqnarray*}
		
		Moreover,
		\begin{eqnarray*}
			& \ & \varepsilon \left( (x \due h)(y \due g)_2 \right) \varepsilon \left((y \due g)_1(z \due t) \right) \\ 
			&=&\varepsilon \left( (x \due h)(y_2 {g_1}^{1} \due g_2)) \varepsilon((y_1 \due g_1^{0})(z \due t) \right) \\
			&=&\varepsilon(x(h_1 \cdot y_2 {g_1}^1) \due h_2 g_2) \varepsilon(y_1({{g_1}^{0}}_{1} \cdot z ) \# {{g_1}^{0}}_{2} t) \\
			&=&\varepsilon(x(h_1 \cdot y_2 {g_1}^1) (( h_2 g_2)^{0} \cdot 1_A) (h_2g_2)^{1} (h_3g_3 \cdot 1_A))  \\
			&\ &\varepsilon(y_1({{g_1}^{0}}_{1} \cdot z ) (({{g_1}^{0}}_{2} t_1)^{0} \cdot 1_A) {{g_1}^{0}}_{2} t_1)^{1}({{g_1}^{0}}_{3} t_2 \cdot 1_A))  \\
			&\stackrel{(\ref{4.6})}{=}&\varepsilon(x\varepsilon_s(h_1 \cdot 1_A) \varepsilon_s(y_2 ) \varepsilon_s({g_1}^1) (( h_2 g_2)^{0} \cdot 1_A) (h_2g_2)^{1} (h_3g_3 \cdot 1_A))  \\
			& \ & \varepsilon(y_1({{g_1}^{0}}_{1} \cdot z )(({{g_1}^{0}}_{2} t_1)^{0} \cdot 1_A) {{g_1}^{0}}_{2} t_1)^{1}({{g_1}^{0}}_{3} t_2 \cdot 1_A))  \\
			&\stackrel{(\ref{4.6})}{=}&\varepsilon(x y_2 (h_1 \cdot 1_A)  \varepsilon_s({g_1}^1) (( h_2 g_2)^{0} \cdot 1_A) (h_2g_2)^{1} (h_3g_3 \cdot 1_A)) \\
			& \ &\varepsilon(y_1({{g_1}^{0}}_{1} \cdot z ) (({{g_1}^{0}}_{2} t_1)^{0} \cdot 1_A) {{g_1}^{0}}_{2} t_1)^{1}({{g_1}^{0}}_{3} t_2 \cdot 1_A))  \\
			&\stackrel{(\ref{4.6})}{=}&\varepsilon(x \varepsilon_s (h_1 \cdot y)  \varepsilon_s({g_1}^1) \varepsilon_s(( h_2 g_2)^{0} \cdot 1_A) \varepsilon_s((h_2g_2)^{1}) \varepsilon_s(h_3g_3 \cdot 1_A) \varepsilon_s({{g_1}^{0}}_{1} \cdot z )\\
			& \ &\varepsilon_s(({{g_1}^{0}}_{2} t_1)^{0} \cdot 1_A) \varepsilon_s(({{g_1}^{0}}_{2} t_1)^{1})\varepsilon_s({{g_1}^{0}}_{3} t_2 \cdot 1_A))  \\
			&{=}& \varepsilon((h_1 \cdot y)_1)\varepsilon(x \varepsilon_s ((h_1 \cdot y)_2 (h_2g_2)^{1}) \varepsilon_s({g_1}^1) \varepsilon_s(( h_2 g_2)^{0} \cdot 1_A)  \varepsilon_s(h_3 \cdot 1_A)\\
			& \ &  \varepsilon_s(g_3 \! \cdot \! 1_A) \varepsilon_s({{g_1}^{0}}_{1} \! \cdot \! z ) \varepsilon_s({{g_1}^{0}}_{2} \! \cdot \! 1_A)\varepsilon_s(t_2 \! \cdot \! 1_A) \varepsilon_s(({{g_1}^{0}}_{3} t_1)^{1})\varepsilon_s(({{g_1}^{0}}_{3} t_2)^{0} \! \cdot \! 1_A)) \\
			&{=}& \varepsilon({h_1}^0 \cdot y_1)\varepsilon(x \varepsilon_s ({h_1}^1 {h_2}^1) (h_3 \cdot y_2{g_2}^{1}) \varepsilon_s({g_1}^1) \varepsilon_s( {h_2}^{0} {g_2}^{0} \cdot 1_A)  \varepsilon_s(h_4 \cdot 1_A) \\
			& \ &\varepsilon_s(g_3 \cdot 1_A) \varepsilon_s({{g_1}^{0}}_{1} \cdot z ) \varepsilon_s(t_2 \cdot 1_A) \varepsilon_s(({{g_1}^{0}}_{2} t_1)^{0} \cdot 1_A)) \varepsilon_s(({{g_1}^{0}}_{2} t_1)^{1})\\
			&\stackrel{(\ref{4.6})}{=}& \varepsilon({h_1}^0 \cdot y_1)\varepsilon(x \varepsilon_s ({h_1}^1 {h_2}^1) \varepsilon_s(h_3 \cdot 1_A) \varepsilon_s(y_2) \varepsilon_s({g_2}^{1}) \varepsilon_s({g_1}^1) \varepsilon_s( {h_2}^{0} \cdot 1_A)   \\
			& \ &\varepsilon_s( {g_2}^{0} \cdot 1_A)  \varepsilon_s(h_4 \cdot 1_A) \varepsilon_s(g_3 \cdot 1_A) \varepsilon_s(({{g_1}^{0}}_{1} \cdot z)_2 ) \varepsilon_s(t_2 \cdot 1_A)  \\
			& \ & \varepsilon_s(({{g_1}^{0}}_{2} t_1)^{0} \cdot 1_A))  \varepsilon_s(({{g_1}^{0}}_{2} t_1)^{1}) \varepsilon_s(({{g_1}^{0}}_{1} \cdot z)_1 )\\
			&\stackrel{\ref{novo weak matched pair}(iv)}{=}& \varepsilon({h_1}^0 \cdot y_1)\varepsilon(x \varepsilon_s({h_1}^1 {h_2}^1 )\varepsilon_s({h_2}^0 \cdot y_2)  \varepsilon_s(h_3 \cdot 1_A) \varepsilon_s({g_1}^1{g_2}^{1} )   \varepsilon_s( {g_2}^{0} \cdot 1_A)    \\
			& \ &\varepsilon_s(g_3 \cdot 1_A) \varepsilon_s({{{g_1}^{0}}_{1}}^{1} {{{g_1}^{0}}_{2}}^{1} ({{{g_1}^{0}}_{3}} \cdot z_2{t_1}^{1})) \varepsilon_s( {{{g_1}^{0}}_{2}}^{0} {t_1}^{0} \cdot 1_A) \\
			& \ & \varepsilon_s(t_2 \cdot 1_A)) \varepsilon({{{g_1}^{0}}_{1}}^{0} \cdot z_1 )\\
			&{=}& \varepsilon({h_1}^0 \cdot y_1)\varepsilon(x \varepsilon_s({h_1}^1 {h_2}^1) \varepsilon_s({h_2}^0 \cdot y_2)  \varepsilon_s(h_3 \cdot 1_A) \varepsilon_s({g_1}^1{g_2}^{1}{g_3}^{1})   \varepsilon_s( {g_3}^{0} \cdot 1_A)    \\
			& \ & \varepsilon_s(g_4 \cdot 1_A) \varepsilon_s({g_1}^{01}{{{g_2}^{0}}_{1}}^{1} ({{{g_2}^{0}}_{2}} \cdot z_2{t_1}^{1})) \varepsilon_s( {{{g_2}^{0}}_{1}}^{0} {t_1}^{0} \cdot 1_A) \varepsilon_s(t_2 \cdot 1_A))\\
			& & \varepsilon({g_1}^{00} \cdot z_1 )\\
			&\stackrel{\ref{lema_basic}(ii)}{=}& \varepsilon({h_1}^0 \cdot y_1)\varepsilon({g_1}^{00} \cdot z_1 )\varepsilon(x \varepsilon_s({h_1}^1 {h_2}^1) \varepsilon_s({h_2}^0 \cdot y_2)  \varepsilon_s(h_3 \cdot 1_A) \varepsilon_s({g_1}^1{g_2}^{1}{g_3}^{1})   \\
			& \ & \varepsilon_s( {g_3}^{0} \cdot 1_A)    \varepsilon_s(g_4 \cdot 1_A) \varepsilon_s({g_1}^{01}{g_2}^{01})\varepsilon_s(z_2) \varepsilon_s({t_1}^{1})\varepsilon_s({g_2}^{00} \cdot 1_A) \\
			&\ & \varepsilon_s({t_1}^{0} \cdot 1_A) \varepsilon_s(t_2 \cdot 1_A)) \\
			&{=}& \varepsilon({h_1}^0 \cdot y_1)\varepsilon({g_1}^{0} \cdot z_1 )\varepsilon(x \varepsilon_s({h_1}^1 {h_2}^1) \varepsilon_s({h_2}^0 \cdot y_2)  \varepsilon_s(h_3 \cdot 1_A) \varepsilon_s({{g_1}^1}_2{{g_2}^{1}}_2{g_3}^{1}) \\
			& \ &\varepsilon_s( {g_3}^{0} \cdot 1_A)    \varepsilon_s(g_4 \cdot 1_A) \varepsilon_s({{g_1}^1}_1{{g_2}^{1}}_1)  \varepsilon_s(z_2) \varepsilon_s({t_1}^{1})\varepsilon_s({g_2}^{0} \cdot 1_A)  \\
			& \ &\varepsilon_s({t_1}^{0} \cdot 1_A) \varepsilon_s(t_2 \cdot 1_A)) \\
			&{=}& \varepsilon({h_1}^0 \cdot y_1)\varepsilon({g_1}^{0} \cdot z_1 )\varepsilon(x \varepsilon_s({h_1}^1 {h_2}^1) \varepsilon_s({h_2}^0 \cdot y_2)  \varepsilon_s(h_3 \cdot 1_A) \varepsilon_s({{g_1}^1}{{g_2}^{1}}) \\
			& \ &\varepsilon_s(g_3 \cdot 1_A)  \varepsilon_s({t_1}^{1})\varepsilon_s({g_2}^{0} \cdot z_2)  \varepsilon_s({t_1}^{0} \cdot 1_A) \varepsilon_s(t_2 \cdot 1_A)).
		\end{eqnarray*}
		
		Then,
		\begin{eqnarray*}
			\varepsilon((x \due h)(y \due g)(z \due t))&=& \varepsilon ((x \due h)(y \due g)_1)\varepsilon ((y \due g)_2(z \due t))\\
			&=& \varepsilon ((x \due h)(y \due g)_2) \varepsilon((y \due g)_1(z \due t)).
		\end{eqnarray*}
		
		Now, we  prove the property for $\Delta(1_A\due 1_H)$. To do this, by Proposition \ref{equivalencia_doum}, we show the following two equivalent equalities:
		$$(x \due h)_1 \otimes \varepsilon_t((x \due h)_2)= \Delta(1_A \due 1_H)(x\due h \otimes 1_A \due 1_H),$$
		$$(x \due h)_1 \otimes \varepsilon_s'((x \due h)_2)= (x\due h \otimes 1_A \due 1_H)\Delta(1_A \due 1_H).$$
		Indeed,
		\begin{eqnarray*}
			& \ &(x \due h)_1 \otimes \varepsilon_t((x \due h)_2)\\
			&=& x_1 \due {h_1}^0 \otimes \varepsilon_t(x_2{h_1}^1 \due h_2) \\
			&=& x_1 \due {h_1}^0 \otimes (1_A \due 1_H)_2\varepsilon((1_A \due 1_H)_1(x_2{h_1}^1 \due h_2)) \\
			&=&x_1 \due {h_1}^0 \otimes {1_A}_2 {{1_H}_1}^1 \due {1_H}_2 \varepsilon({1_A}_1 ({{{1_H}_1}^0}_1 \cdot x_2 {h_1}^1) \due {{{1_H}_1}^0}_2 h_2) \\
			&\stackrel{(\ref{4.6})}{=}& x_1 \due {h_1}^0 \otimes {1_A}_2 {{1_H}_1}^1 \due {1_H}_2 \varepsilon({1_A}_1 (({{{1_H}_1}^0}_2 h_2)^0 \cdot 1_A)\varepsilon_s({{{1_H}_1}^0}_1 \cdot 1_A)  \\
			& \ & \varepsilon_s(x_2{h_1}^1) \varepsilon_s(({{{1_H}_1}^0}_2 h_2)^1) \varepsilon_s({{{1_H}_1}^0}_3 \cdot (h_3 \cdot 1_A))) \\
			&\stackrel{\ref{novo weak matched pair}(iv)}{=}& x_1 \due {h_1}^0 \otimes {1_A}_2 {{1_H}_1}^1 \due {1_H}_2\varepsilon({1_A}_1 (({{{1_H}_1}^0}_2 h_2)^0 \cdot 1_A) \varepsilon_s(x_2{h_1}^1) \varepsilon_s(h_3 \cdot 1_A))\\
			& \ &\varepsilon_s(({{{1_H}_1}^0}_1 \cdot 1_A)_1 ({{{1_H}_1}^0}_1 \cdot 1_A)_2 ({{{1_H}_1}^0}_2 h_2)^1) \\
			&{=}& x_1 \due {h_1}^0 \otimes {1_A}_2 {{1_H}_1}^1 \due {1_H}_2\varepsilon({1_A}_1 (({{{1_H}_1}^0}_2 h_2)^0 \cdot 1_A) \varepsilon_s(x_2{h_1}^1) \varepsilon_s(h_3 \cdot 1_A)\\
			& \ &\varepsilon_s(({{{1_H}_1}^0}_1 \cdot 1_A)_1) \varepsilon_s( ({{{1_H}_1}^0}_1 \cdot 1_A)_2 ({{{1_H}_1}^0}_2 h_2)^1)) \\
			&\stackrel{\ref{troca}}{=}& x_1 \due {h_1}^0 \otimes {1_A}_2 {{1_H}_1}^1 \due {1_H}_2\varepsilon({1_A}_1 ({{{{1_H}_1}^0}_2}^0 {h_2}^0 \cdot 1_A) \varepsilon_s(x_2{h_1}^1) \varepsilon_s(h_3 \cdot 1_A)\\
			& \ &\varepsilon_s({{{{1_H}_1}^0}_1}^0 \cdot {1_A'}_1) \varepsilon_s({{{{1_H}_1}^0}_1}^1  {{{{1_H}_1}^0}_2}^1 ({{{{1_H}_1}^0}_3} \cdot {1_A'}_2 {h_2}^1)))\\
			&\stackrel{\ref{novo weak matched pair}(iv)}{=}& x_1 \due {h_1}^0 \otimes {1_A}_2 {{1_H}_1}^1 \due {1_H}_2 \varepsilon({1_A}_1 \varepsilon_s({{{{{1_H}_1}^0}_1}^0}_2 \cdot 1_A) \varepsilon_s({h_2}^0 \cdot 1_A) \varepsilon_s(x_2{h_1}^1) \\
			& \ &\varepsilon_s(h_3 \cdot 1_A) \varepsilon_s({{{{{1_H}_1}^0}_1}^0} _1\cdot {1_A}) \varepsilon_s({1_A'}_1)\varepsilon_s({{{{1_H}_1}^0}_1}^1) \varepsilon_s(  {{{{1_H}_1}^0}_2}\cdot 1_A) \varepsilon_s({1_A'}_2 {h_2}^1))\\
			&{=}& x_1 \due {h_1}^0 \otimes {1_A}_2 {{1_H}_1}^1 \due {1_H}_2 \varepsilon({1_A}_1 \varepsilon_s({{{{{1_H}_1}^0}_1}^0} \cdot 1_A) \varepsilon_s({h_2}^0 \cdot 1_A)  \\
			& \ & \varepsilon_s(x_2{h_1}^1 {h_2}^1) \varepsilon_s(h_3 \cdot 1_A) \varepsilon_s({{{{1_H}_1}^0}_1}^1) \varepsilon_s(  {{{{1_H}_1}^0}_2}\cdot 1_A))\\
			&{=}& x_1 \due {h_1}^0 \otimes {1_A}_2 {{1_H}_1}^1 {{1_H}_2}^1 \due {1_H}_3 \varepsilon({1_A}_1 \varepsilon_s({{1_H}_1}^{00} \cdot 1_A) \varepsilon_s({h_2}^0 \cdot 1_A) \\
			& \ &\varepsilon_s(x_2{h_1}^1 {h_2}^1)   \varepsilon_s(h_3 \cdot 1_A) \varepsilon_s({{1_H}_1}^{01}) \varepsilon_s(  {{{1_H}_2}^0}\cdot 1_A))\\
			&{=}& x_1 \due {h_1}^0 \otimes {1_A}_2 {{{1_H}_1}^1}_2 {{1_H}_2}^1 \due {1_H}_3 \varepsilon({1_A}_1 \varepsilon_s({{1_H}_1}^0 \cdot 1_A) \varepsilon_s({h_2}^0 \cdot 1_A)  \\
			& \ & \varepsilon_s(x_2{h_1}^1 {h_2}^1) \varepsilon_s(h_3 \cdot 1_A) \varepsilon_s({{{1_H}_1}^{1}}_1) \varepsilon_s(  {{{1_H}_2}^0}\cdot 1_A))\\
			&\stackrel{(\ref{4.6})}{=}& x_1 \due {h_1}^0 \otimes {1_A}_3 {{{1_H}_1}^1}_2 {{1_H}_2}^1 \due {1_H}_3 \varepsilon({1_A}_2 {{{1_H}_1}^1}_1) \varepsilon({1_A}_1\varepsilon_s({{1_H}_1}^0 \cdot 1_A)  \\
			& \ & \varepsilon_s({h_2}^0 \cdot 1_A) \varepsilon_s(x_2{h_1}^1 {h_2}^1) \varepsilon_s(h_3 \cdot 1_A) \varepsilon_s({{{1_H}_2}^0}\cdot 1_A))\\
			&{=}& x_1 \due {h_1}^0 \otimes {1_A}_2 {{{1_H}_1}^1} {{1_H}_2}^1 \due {1_H}_3 \varepsilon({1_A}_1\varepsilon_s({{1_H}_1}^0 \cdot 1_A) \varepsilon_s({h_2}^0 \cdot 1_A) \\
			& \ & \varepsilon_s(x_2{h_1}^1 {h_2}^1) \varepsilon_s(h_3 \cdot 1_A) \varepsilon_s({{{1_H}_2}^0}\cdot 1_A))\\
			&{=}& x_1 \due {{h_1}^0}_1 \otimes {1_A}_2 {{1_H}_1}^1 \due {1_H}_2 \varepsilon({1_A}_1\varepsilon_s({{{1_H}_1}^0}_1 \cdot 1_A) \varepsilon_s({{h_1}^0}_2 \cdot 1_A) \varepsilon_s(x_2{h_1}^1)\\
			& \ & \varepsilon_s(h_2 \cdot 1_A) \varepsilon_s({{{1_H}_1}^0}_2\cdot 1_A))\\
			&\stackrel{\ref{lema_basic}(ii)}{=}& x_1 \due {{h_1}^0}_1 \otimes {1_A}_2 {{1_H}_1}^1 \due {1_H}_2 \varepsilon({1_A}_1\varepsilon_s({{{1_H}_1}^0} \cdot 1_A)  \varepsilon_s(x_2{h_1}^1) \varepsilon_s(h_2 \cdot 1_A) \\
			& \ &\varepsilon_s({{h_1}^0}_2 \cdot 1_A))\\
			&\stackrel{(\ref{4.6})}{=}& x_1 \due {{h_1}^0}_1 \otimes {1_A}_3 {{1_H}_1}^1 \due {1_H}_2 \varepsilon({1_A}_1 x_2{h_1}^1  ({{h_1}^0}_2 \cdot 1_A))\\
			& \ & \varepsilon({1_A}_2 ({{{1_H}_1}^0} \cdot 1_A)    (h_2 \cdot 1_A)) \\
			&\stackrel{(\ref{4.15})}{=}& x \varepsilon_s({1_A}_1) \varepsilon_s({h_1}^1) \varepsilon_s ({{h_1}^0}_2 \cdot 1_A) \due {{h_1}^0}_1 \otimes {1_A}_3 {{1_H}_1}^1 \due {1_H}_2 \\
			& \ &\varepsilon({1_A}_2 ({{{1_H}_1}^0} \cdot 1_A)    (h_2 \cdot 1_A)) \\
			&\stackrel{\ref{lema_basic}(i)}{=}& x {1_A}_1 \varepsilon_s({h_1}^1)  \due {{h_1}^0} \otimes {1_A}_3 {{1_H}_1}^1 \due {1_H}_2  \varepsilon({1_A}_2 ({{{1_H}_1}^0} \cdot 1_A)    (h_2 \cdot 1_A)) \\
			&{=}& x_1 {1_A}_1 \varepsilon_s({h_1}^1)_1 ({{{h_1}^0}_1}^0 \cdot 1_A) \# {{{h_1}^0}_2}^0 \otimes {1_A}_4 {{1_H}_1}^1 \due {1_H}_2  \\
			& \ & \varepsilon({1_A}_3 ({{{1_H}_1}^0} \cdot 1_A)    (h_2 \cdot 1_A)) \varepsilon (x_2 {1_A}_2 \varepsilon_s({h_1}^1)_2 {{{h_1}^0}_1}^1 {{{h_1}^0}_2}^1 ({{{h_1}^0}_3} \cdot 1_A)) \\
			&\stackrel{(\ref{4.15})}{=}& x {1_A}_1 \varepsilon_s({h_1}^1) \varepsilon_s({{{h_1}^0}_1}^1 {{{h_1}^0}_2}^1) ({{{h_1}^0}_1}^0 \cdot 1_A)  \# {{{h_1}^0}_2}^0 \otimes {1_A}_3 {{1_H}_1}^1 \due {1_H}_2 \\ 
			& \ & \varepsilon({1_A}_2 ({{{h}_1}^0}_3 \cdot 1_A)   ({{1_H}_1}^0 \cdot 1_A) (h_2 \cdot 1_A)) \\
			&=& x {1_A}_1 \varepsilon_s({h_1}^1{h_2}^1) \varepsilon_s({h_1}^{01} {{{h_2}^0}_1}^1) ({h_1}^{00} \cdot 1_A)  \# {{{h_2}^0}_1}^0 \otimes {1_A}_3 {{1_H}_1}^1 \due {1_H}_2 \\ 
			& \ & \varepsilon({1_A}_2 ({{{h}_2}^0}_2 \cdot 1_A)   ({{1_H}_1}^0 \cdot 1_A) (h_3 \cdot 1_A)) \\
			&=& x {1_A}_1 \varepsilon_s({{h_1}^1}_2{h_2}^1{h_3}^1) \varepsilon_s({{h_1}^1}_1 {h_2}^{01}) ({h_1}^{0} \cdot 1_A)  \# {h_2}^{00} \otimes {1_A}_3 {{1_H}_1}^1 \due {1_H}_2 \\ 
			& \ & \varepsilon({1_A}_2 ({{{h}_3}^0} \cdot 1_A)   ({{1_H}_1}^0 \cdot 1_A) (h_4 \cdot 1_A)) \\
			&=& x {1_A}_1 \varepsilon_s({{h_1}^1}{h_2}^1{h_3}^1) ({h_1}^{0} \cdot 1_A)  \# {h_2}^{0} \otimes {1_A}_3 {{1_H}_1}^1 \due {1_H}_2 \\ 
			& \ & \varepsilon({1_A}_2 ({{{h}_3}^0} \cdot 1_A)   ({{1_H}_1}^0 \cdot 1_A) (h_4 \cdot 1_A)) \\
			&=& x {1_A}_1 \varepsilon_s({{h_1}^1}) ({{h_1}^{0}}_1 \cdot 1_A)  \# {{h_1}^{0}}_2 \otimes {1_A}_3 {{1_H}_1}^1 \due {1_H}_2 \\ 
			& \ & \varepsilon({1_A}_2 ({{h_1}^{0}}_3 \cdot 1_A)   ({{1_H}_1}^0 \cdot 1_A) (h_2 \cdot 1_A)) \\
			&\stackrel{\ref{novo weak matched pair}(v)}{=}& x {1_A}_1 \varepsilon_s({{h_1}^1}) ({{h_1}^{0}}_1 \cdot 1_A)  \# {{h_1}^{0}}_2  \otimes {1_A}_3 {{1_H}'_1}^1 \varepsilon({{1_H}'_1}^0) \due {1_H}_2 {{1_H}'_2} \\ 
			& \ & \varepsilon({1_A}_2 ({{h_1}^{0}}_3 \cdot 1_A)   ({{1_H}_1} \cdot 1_A) (h_2 \cdot 1_A)) \\
			&\stackrel{(\ref{4.6})}{=}& x {1_A}_1 \varepsilon_s({{h_1}^1}) ({{h_1}^{0}}_1 \cdot 1_A)  \# {{h_1}^{0}}_2 \otimes {1_A}_3 {{1_H}'_1}^1 \varepsilon({{1_H}'_1}^0) \due {1_H}_2 {{1_H}'_2} \\ 
			& \ & \varepsilon({1_A}_2 \varepsilon_s({{h_1}^{0}}_3 \cdot 1_A)   \varepsilon_s({{1_H}_1} \cdot 1_A) (h_2 \cdot 1_A)) \\
			&=& x {1_A}_1 \varepsilon_s({{h_1}^1}) ({{h_1}^{0}}_1 \cdot 1_A)  \# {{h_1}^{0}}_2 \otimes {1_A}_3 {{1_H}'_1}^1 \varepsilon({{1_H}'_1}^0) \due {1_H}_2 {{1_H}'_2} \\ 
			& \ & \varepsilon({1_A}_2 \varepsilon_s({{1_H}_1} {{h_1}^{0}}_3 \cdot 1_A)   (h_2 \cdot 1_A)) \\
			&\stackrel{(\ref{4.12})}{=}& x {1_A}_1 \varepsilon_s({{h_1}^1}) ({{h_1}^{0}}_1 \cdot 1_A)  \# {{h_1}^{0}}_2 \otimes {1_A}_3 {{1_H}'_1}^1 \varepsilon({{1_H}'_1}^0) \due \varepsilon_t({{h_1}^0}_4) {{1_H}'_2} \\ 
			& \ & \varepsilon({1_A}_2 \varepsilon_s({{h_1}^{0}}_3 \cdot 1_A)   (h_2 \cdot 1_A)) \\
			&\stackrel{(\ref{4.15})}{=}& x {1_A}_1 \varepsilon_s({{h_1}^1}) ({{h_1}^{0}}_1 \cdot 1_A)  \# {{h_1}^{0}}_2 \otimes {1_A}_3 {{1_H}'_1}^1 \varepsilon({{1_H}'_1}^0) \due \varepsilon_t({{h_1}^0}_3) {{1_H}'_2} \\
			& \ &\varepsilon({1_A}_2  (h_2 \cdot 1_A)).
		\end{eqnarray*}
		
		On the other hand,
		\begin{eqnarray*}
			& \ &\Delta( 1_A \due 1_H) (x \due h \otimes 1_A \due 1_H)\\
			& =& ({1_A}_1 \due {{1_H}_1}^0)(x \due h) \otimes ({1_A}_2 {{1_H}_1}^1 \due {1_H}_2)(1_A \due 1_H)\\
			& = & ({1_A}_1 ({{{1_H}_1}^0}_1 \cdot x)_1 \underline{\#} ({{{1_H}_1}^0}_2 h_1)^0) \otimes ({1_A}_3 {{1_H}_1}^1 \due {1_H}_2) \\
			& \ & \varepsilon({1_A}_2 ({{{1_H}_1}^0}_1 \cdot x)_2  ({{{1_H}_1}^0}_2 h_1)^1 ({{{1_H}_1}^0}_3 h_2 \cdot 1_A))\\
			& = & ({1_A}_1 ({{{{1_H}_1}^0}_1}^0 \cdot x_1) \underline{\#} {{{{1_H}_1}^0}_2}^0 {h_1}^0) \otimes ({1_A}_3 {{1_H}_1}^1 \due {1_H}_2)\\
			& \ &\varepsilon({1_A}_2 {{{{1_H}_1}^0}_1}^1 {{{{1_H}_1}^0}_2}^1 ({{{1_H}_1}^0}_3 \cdot x_2 {h_1}^1) ({{{1_H}_1}^0}_4 h_2  \cdot 1_A))\\
			&\stackrel{\ref{novo weak matched pair}(iv)}{=}& ({1_A}_1 ({{{{1_H}_1}^0}_1}^0 \cdot x_1) \underline{\#} {{{{1_H}_1}^0}_2}^0 {h_1}^0) \otimes ({1_A}_3 {{1_H}_1}^1 \due {1_H}_2) \\
			& \ &\varepsilon({1_A}_2 \varepsilon_s({{{{1_H}_1}^0}_1}^1 {{{{1_H}_1}^0}_2}^1) \varepsilon_s({{{1_H}_1}^0}_3 \cdot 1_A) \varepsilon_s(x_2 {h_1}^1) \varepsilon_s({{{1_H}_1}^0}_4   \cdot 1_A) \varepsilon_s(h_2 \cdot 1_A))\\
			&=& ({1_A}_1 ({{{1_H}_1}^{00}}_1 \cdot x_1) \underline{\#} {{{1_H}_1}^{00}}_2  {h_1}^0) \otimes ({1_A}_3 {{1_H}_1}^1 {{1_H}_2}^1 \due {1_H}_3)\varepsilon({1_A}_2 \varepsilon_s({{1_H}_1}^{01})   \\
			& \ &\varepsilon_s(x_2 {h_1}^1) \varepsilon_s({{{1_H}_2}^0}   \cdot 1_A) \varepsilon_s(h_2 \cdot 1_A))\\
			&=& ({1_A}_1 ({{{1_H}_1}^{0}}_1 \cdot x_1) \underline{\#} {{{1_H}_1}^{0}}_2  {h_1}^0) \otimes ({1_A}_3 {{{1_H}_1}^1} {{{1_H}_2}^1}  \due {1_H}_3)\varepsilon({1_A}_2 \varepsilon_s({{{1_H}_2}^0}   \cdot 1_A)  \\
			& \ & \varepsilon_s(x_2 {h_1}^1) \varepsilon_s(h_2 \cdot 1_A))\\
			&=& ({1_A}_1 ({{{1_H}_1}^{0}}_1 \cdot x_1) \underline{\#} {{{1_H}_1}^{0}}_2  {h_1}^0) \otimes ({1_A}_4 {{{1_H}_1}^1}   \due {1_H}_2)\varepsilon({1_A}_2 ({{{1_H}_1}^0}_3   \cdot 1_A) x_2 {h_1}^1)  \\
			& \ &  \varepsilon({1_A}_3(h_2 \cdot 1_A))\\
			&\stackrel{(\ref{4.15})}{=}& ({1_A}_1 \varepsilon_s(x_2 {h_1}^1) \varepsilon_s(({{{1_H}_1}^0}_3   \cdot 1_A)) ({{{1_H}_1}^{0}}_1 \cdot x_1) \underline{\#} {{{1_H}_1}^{0}}_2  {h_1}^0) \otimes ({1_A}_3 {{{1_H}_1}^1}   \due {1_H}_2)  \\
			& \ & \varepsilon({1_A}_2(h_2 \cdot 1_A))\\
			&\stackrel{\ref{lema_basic}(i)}{=}& ({1_A}_1 \varepsilon_s(x_2 {h_1}^1)  ({{{1_H}_1}^{0}}_1 \cdot x_1) \underline{\#} {{{1_H}_1}^{0}}_2  {h_1}^0) \otimes ({1_A}_3 {{{1_H}_1}^1}   \due {1_H}_2)  \varepsilon({1_A}_2(h_2 \cdot 1_A))\\
			&\stackrel{\ref{novo weak matched pair}(v)}{=}& ({1_A}_1 \varepsilon_s(x_2 {h_1}^1)  ({{1_H}_1} \cdot x_1) \underline{\#} {{1_H}_2} {h_1}^0) \otimes ({1_A}_3 {{{1_H}'_1}^1} \varepsilon({{{1_H}'_1}^0})  \due {1_H}_3{{1_H}'_2}) \\
			& \ & \varepsilon({1_A}_2(h_2 \cdot 1_A))\\
			&\stackrel{\ref{lema_2_1_9_Glauber}}{=}& ({1_A}_1 \varepsilon_s( {h_1}^1)\varepsilon_s(x_2)x_1  ({{1_H}_1} \cdot 1_A) \underline{\#} {{1_H}_2} {h_1}^0) \otimes ({1_A}_3 {{{1_H}'_1}^1} \varepsilon({{{1_H}'_1}^0})  \due {1_H}_3{{1_H}'_2}) \\
			& \ & \varepsilon({1_A}_2(h_2 \cdot 1_A))\\
			&{=}& ({x1_A}_1 \varepsilon_s( {h_1}^1)({{1_H}_1}{{h_1}^0}_1  \cdot 1_A) \# {{1_H}_2} {{h_1}^0}_2) \otimes ({1_A}_3 {{{1_H}'_1}^1} \varepsilon({{{1_H}'_1}^0})  \due {1_H}_3{{1_H}'_2}) \\
			& \ & \varepsilon ({1_A}_2(h_2 \cdot 1_A))\\
			&\stackrel{(\ref{4.12})}{=}& (x {1_A}_1 \varepsilon_s({{h_1}^1}) ({{h_1}^{0}}_1 \cdot 1_A)  \# {{h_1}^{0}}_2 ) \otimes ({1_A}_3 {{1_H}'_1}^1 \varepsilon({{1_H}'_1}^0) \due \varepsilon_t({{h_1}^0}_3) {{1_H}'_2})\\
			& \ & \varepsilon({1_A}_2  (h_2 \cdot 1_A)).
		\end{eqnarray*}
		
		Analogously, we obtain the identity 
		$$(x \due h)_1 \otimes \varepsilon_s'((x \due h)_2)= (x\due h \otimes 1_A \due 1_H)\Delta(1_A \due 1_H).$$
		Therefore, $A\overline{\underline{\#}} H$ is a weak bialgebra.
	\end{proof}

	\subsection{The structure of weak Hopf algebra}
	
	\
	
	In the previous section, we constructed the weak bialgebra $A \due H$ from a compatible weak matched pair $(H,A)$.
	Now, if $H$ and $A$ are weak Hopf algebras, it is natural ask when we can endow such bialgebra with a weak Hopf structure, under some compatibility conditions for the antipode maps.
	To do this, we need the following result:
	
	\begin{lem}\label{3traingulos}
		Let $(H,A)$ be a compatible weak matched pair. Then for all $a, x \in A$ and $h \in H$, it holds $x_1 ({h_1}^0 \cdot 1_A) \varepsilon(a x_2{h_1}^1 (h_2 \cdot 1_A))= x_1 (h^0 \cdot 1_A) \varepsilon(a x_2 h^1)$.    
	\end{lem}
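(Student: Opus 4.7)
The plan is to derive the identity from equation~\eqref{**} from the proof of Proposition~\ref{copro_barrabaixo}. The key intermediate tool is the rewrite
\[
(h^0 \cdot 1_A) \otimes h^1 \;=\; (h_1^0 \cdot 1_A)(h_2^0 \cdot 1_A) \otimes h_1^1 h_2^1 \qquad \text{in } A\otimes A,
\]
which combines (CC2), $\Delta(h^0)\otimes h^1 = h_1^0\otimes h_2^0\otimes h_1^1 h_2^1$, with (MA2) applied at $a=b=1_A$, yielding $(k_1\cdot 1_A)(k_2\cdot 1_A)=k\cdot 1_A$. Multiplying the second tensor factor by $a x_2$, applying $\varepsilon_A$, and prefixing with $x_1$ recasts the right-hand side of the lemma as
\[
x_1\, (h_1^0\cdot 1_A)(h_2^0\cdot 1_A)\,\varepsilon\!\bigl(a x_2\, h_1^1 h_2^1\bigr).
\]

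The second step invokes~\eqref{**}. Applying $\mathrm{id}_A\otimes (\cdot\,1_A)\otimes \mathrm{id}_A$ (sending the middle $H$-factor into $A$), multiplying the resulting first two $A$-factors, then multiplying the rightmost factor by $a$ and applying $\varepsilon_A$ yields an identity in $A$. On the left-hand side of~\eqref{**}, a further application of the CC2+MA2 rewrite --- this time used to collapse the $\Delta^2(h)=h_1\otimes h_2\otimes h_3$ structure back to $\Delta(h)=h_1\otimes h_2$, with $h_3$ becoming the ``second factor'' --- produces exactly $x_1(h_1^0\cdot 1_A)\,\varepsilon(a x_2\, h_1^1(h_2\cdot 1_A))$, i.e.\ the left-hand side of the lemma. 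On the right-hand side of~\eqref{**}, the analogous processing, together with (CC1) in the form $(h^0\cdot 1_A)\varepsilon(h^1)=h\cdot 1_A$, matches the expression displayed above. Chaining these three equalities through~\eqref{**} gives the lemma.

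The main obstacle is the Sweedler bookkeeping: \eqref{**} couples $\Delta^2(h)$-indices with coactions on both $h_1$ and $h_2$, and these must be collapsed to $\Delta(h)$-indices in a way compatible with the coactions and the module structure on each side. The cocommutativity of $H$ (from the abelian hypothesis on the compatible weak matched pair) plays a supporting role in rearranging these indices, while (CC1) is what ultimately allows the factor $(h_2\cdot 1_A)$ to be identified with the coaction pair $(h_2^0\cdot 1_A)\otimes h_2^1$ arising from the rewrite.
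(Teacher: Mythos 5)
Your overall skeleton is the same as the paper's: both arguments pivot on equation \eqref{**}, and your preliminary rewrites are sound. The identity $(h^0\cdot 1_A)\otimes h^1=({h_1}^0\cdot 1_A)({h_2}^0\cdot 1_A)\otimes {h_1}^1{h_2}^1$ does follow from (CC2) and (MA2), its reverse application (applied to $\Delta^2(h)$) correctly collapses the processed left-hand side of \eqref{**} to the left-hand side of the lemma, and the application of \eqref{**} itself is legitimate.

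The gap is in your final matching step. After your processing, the right-hand side of \eqref{**} reads
\begin{equation*}
x_1\,(h_1\cdot 1_A)_1\,({h_2}^0\cdot 1_A)\;\varepsilon\bigl(a\,x_2\,(h_1\cdot 1_A)_2\,{h_2}^1\bigr),
\end{equation*}
where $\Delta_H$ is applied first and the coaction hits only the second leg $h_2$; you must convert this into $x_1({h_1}^0\cdot 1_A)({h_2}^0\cdot 1_A)\,\varepsilon(a x_2\, {h_1}^1{h_2}^1)$, where the coaction hits both legs. You assert this follows from ``the analogous processing, together with (CC1).'' But (CC1) only gives $h_1\cdot 1_A=\varepsilon({h_1}^1)({h_1}^0\cdot 1_A)$, which leaves the scalar $\varepsilon({h_1}^1)$ dangling \emph{outside} the other $\varepsilon(\cdots)$ factor; since $\varepsilon$ is not multiplicative on a weak bialgebra, you cannot simply push it back inside to form $\varepsilon(a x_2 {h_1}^1 {h_2}^1)$. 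Recombining it genuinely requires (CC3) together with the weak counit axiom $\varepsilon(uvw)=\varepsilon(uv_1)\varepsilon(v_2w)$; this trade is precisely the content of Remark \ref{obs_ordem}, which the paper invokes at exactly the corresponding point of its own computation. Your argument closes once the appeal to (CC1) is replaced by Remark \ref{obs_ordem} (suitably decorated with the extra factors $a$, $x_2$ and $({h_2}^0\cdot 1_A)$), but as written the hardest step of the lemma --- the one carrying all of its content --- is asserted rather than proved.
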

	\begin{proof}
		Indeed, 
		\begin{eqnarray*}
			& \ & x_1 ({h_1}^0 \cdot 1_A) \varepsilon(a x_2{h_1}^1 (h_2 \cdot 1_A))\\
			&=& x_1 ({h_1}^0 \cdot 1_A)\varepsilon(x_2{h_1}^1) \varepsilon(a x_3 (h_2 \cdot 1_A))\\
			&=& x_1 ({h_1}^0 \cdot 1_A)\varepsilon(h_2)\varepsilon(x_2{h_1}^1) \varepsilon(a x_3 (h_3 \cdot 1_A))\\
			&=&  x_1({{h^0}_1}^{0}\cdot 1_A)\varepsilon({h^0}_2)\varepsilon(x_2{{h^0}_1}^{1})\varepsilon({h}^1)\varepsilon(a x_3 (h_3 \cdot 1_A))\\
			&=&  x_1({h_1}^{00}\cdot 1_A)\varepsilon({h_2}^0)\varepsilon(x_2{h_1}^{01})\varepsilon({h_1}^1{h_2}^2)\varepsilon(a x_3 (h_3 \cdot 1_A))\\
			&=&  x_1({h_1}^0\cdot 1_A)\varepsilon({h_2}^0)\varepsilon(x_2{{h_1}^1}_1)\varepsilon({{h_1}^1}_2{h_2}^2)\varepsilon(a x_3 (h_3 \cdot 1_A))\\
			&=& x_1({h_1}^0\cdot 1_A)\varepsilon({h_2}^0)\varepsilon(x_2{h_1}^1{h_2}^2) \varepsilon(a x_3 (h_3 \cdot 1_A))\\
			&=& x_1({h_1}^0\cdot 1_A)\varepsilon({h_2}^0)\varepsilon(x_2{h_1}^1{h_2}^1(h_3 \cdot 1_A))\varepsilon(x_3a)\\
			&\stackrel{\eqref{**}}{=}& x_1(h_1\cdot 1_A)_1\varepsilon({h_2}^0)\varepsilon(x_2(h_1\cdot 1_A)_2{h_2}^1)\varepsilon(x_3a)\\
			&=& x_1(h_1\cdot 1_A)\varepsilon({h_2}^0)\varepsilon(x_2{h_2}^1)\varepsilon(x_3a)\\
			&\stackrel{\ref{obs_ordem}}{=}& x_1({h^0}_1\cdot 1_A)\varepsilon({h^0}_2)\varepsilon(ax_2{h_2}^1)\\
			&=& x_1 (h^0 \cdot 1_A) \varepsilon(a x_2 h^1).
		\end{eqnarray*}
	\end{proof}
	
	\begin{thm}\label{antipoda}
		Let $(H,A)$ be a  compatible weak matched pair of weak Hopf algebras. If
		\begin{equation} \label{condicao_7}
			(1_{H1}h^0\cdot 1_A)\otimes S_A(h^1)\varepsilon_s({1_{H2}}^1)\otimes {1_{H2}}^0=\varepsilon_t({1_{H1}}^0h^0\cdot 1_A) \otimes \varepsilon_t(h^1)\varepsilon_s({1_{H2}}^1){1_{H1}}^1\otimes {1_{H2}}^0
		\end{equation}
		and
		\begin{equation}\label{condicao_8}
			\varepsilon_s({h_1}^1{h_2}^1)(S_H({h_1}^0)\cdot 1_A)\otimes S_H({h_2}^0)h_3= \varepsilon_s(h^1{1_{H2}}^1)\varepsilon_s(h^0{1_{H1}}\cdot 1_A)\otimes {1_{H2}}^0
		\end{equation}
		hold for all $h,g\in H$, then $A\underline{\overline{\#}}H$ is a weak Hopf algebra with antipode given by
		$$S(x\underline{\overline{\#}}h)=(1_A\underline{\overline{\#}} S_H(h^0))(S_A(xh^1)\underline{\overline{\#}}1_H).$$
	\end{thm}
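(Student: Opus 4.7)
Theorem \ref{principal} already equips $A\due H$ with a weak bialgebra structure, so what remains is to verify, for the proposed map $S$, the three antipode axioms
\[(x\due h)_1 S((x\due h)_2)=\varepsilon_t(x\due h),\quad S((x\due h)_1)(x\due h)_2=\varepsilon_s(x\due h),\quad S\ast\mathrm{id}\ast S=S.\]
First I would observe that $S$ is well defined on $A\due H$: the defining formula writes $S(x\due h)$ as a product of two elements of $A\due H$, which is a subalgebra of $A\underline{\#}H$, so $S(A\due H)\subseteq A\due H$. Independence of the representative should follow from the coaction axioms together with the weak Hopf identities in $H$ and $A$ and the commutativity of $A$.

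For the first axiom, expand the left-hand side using the coproduct $\Delta(x\due h)=x_1\due {h_1}^0\otimes x_2{h_1}^1\due h_2$ from Theorem \ref{principal}, the multiplication rule $(y\due k)(z\due \ell)=y(k_1\cdot z)\due k_2\ell$, and the definition of $S$. After applying the antipode axioms $h_1 S_H(h_2)=\varepsilon_t(h)$ in $H$ and $y_1 S_A(y_2)=\varepsilon_t(y)$ in $A$, plus multiplicativity of the action (MA2), the expression collapses to one involving $\varepsilon_s({h_1}^1{h_2}^1)(S_H({h_1}^0)\cdot 1_A)\otimes S_H({h_2}^0)h_3$; this is precisely where hypothesis \eqref{condicao_8} is invoked to rewrite the term in a shape matching $\varepsilon_t(x\due h)$, the latter being obtained by applying $\varepsilon_t(a)=(1_A\due 1_H)_2\,\varepsilon((1_A\due 1_H)_1 a)$ and expanding $\Delta(1_A\due 1_H)$. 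The second axiom is treated symmetrically: the factor $S_A(h^1)$ appearing inside $S(x\due h)$ must be absorbed into an $\varepsilon_t$-type expression before $S_H(h_1)h_2=\varepsilon_s(h)$ can be applied, and hypothesis \eqref{condicao_7} is exactly the identity that effects this absorption. The third axiom $S\ast\mathrm{id}\ast S=S$ should then follow by the standard formal manipulation valid in any weak bialgebra admitting the two one-sided antipode identities, using Proposition \ref{equivalencia_doum} and the identities \eqref{4.38}--\eqref{4.39}.

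The main obstacle is bookkeeping. Each of the three verifications carries Sweedler indices simultaneously for $\Delta_H$, $\Delta_A$, the coaction $\rho$ with its ${}^0,{}^1$ superscripts, and the action on iterated tensor factors, easily producing chains of ten or more symbols. The principal tools for managing this are Remark \ref{troca} for the coaction of a product, Lemma \ref{3traingulos} for moving $(h_i\cdot 1_A)$ factors through the counit, Remark \ref{obs_ordem} for freely reordering the two projections, and Lemma \ref{lema_basic} for simplifying iterated actions on $1_A$. The conceptual content of the proof is concentrated at a single step in each axiom — the invocation of \eqref{condicao_7} or \eqref{condicao_8} — with everything else being mechanical application of the weak matched pair and weak Hopf algebra axioms.
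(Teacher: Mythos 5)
Your overall strategy coincides with the paper's: Theorem \ref{principal} supplies the weak bialgebra structure, and one verifies the three antipode axioms by direct computation, invoking the two hypotheses at the decisive steps. However, you have the roles of the two hypotheses reversed. In the first axiom, $(x\due h)_1\,S((x\due h)_2)$, the antipode $S_H$ is applied to the second leg, so the products ${h^0}_1 S_H({h^0}_2)$ collapse to $\varepsilon_t$'s in $H$, and what survives is the dangling factor $S_A(h^1)$ together with $(1_{H1}h^0\cdot 1_A)$, $\varepsilon_s({1_{H2}}^1)$ and ${1_{H2}}^0$ --- exactly the left-hand side of \eqref{condicao_7}, which converts $S_A(h^1)$ into $\varepsilon_t(h^1)$ so as to match $\varepsilon_t(x\due h)$. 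Dually, it is the second axiom, $S((x\due h)_1)\,(x\due h)_2$, that collapses to $\varepsilon_s(x)\varepsilon_s({h_1}^1{h_2}^1)(S_H({h_1}^0)\cdot 1_A)\due S_H({h_2}^0)h_3$ and requires \eqref{condicao_8}. The swap cannot be repaired by reorganizing the computation: leftover $S_A(h^1)$ factors occur only in the $\varepsilon_t$ axiom and leftover $S_H(h^0)$ factors only in the $\varepsilon_s$ axiom, matching the left-hand sides of \eqref{condicao_7} and \eqref{condicao_8} respectively, so a plan that looks for the pattern of \eqref{condicao_8} inside the first axiom will stall at the key step.

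A second gap: you dismiss the third axiom $S\ast\mathrm{id}\ast S=S$ as a formal consequence of the two one-sided identities ``in any weak bialgebra''. It is not: in a weak bialgebra $\varepsilon_s$ and $\varepsilon_t$ are not convolution units, so $S\ast\mathrm{id}\ast S=\varepsilon_s\ast S=S\ast\varepsilon_t$ does not reduce to $S$ for free --- this is precisely why condition (iii) is an independent part of the definition of the antipode. The paper first uses the already-established identity (i) to reduce the claim to $S((x\due h)_1)\,\varepsilon_t((x\due h)_2)=S(x\due h)$ and then carries out a further substantial computation with the explicit formula for $S$, Lemma \ref{lema_2_1_9_Glauber} and the identities \eqref{4.13} and \eqref{4.39}; no hypothesis beyond \eqref{condicao_7}--\eqref{condicao_8} is needed, but the verification is genuinely specific to this $S$ and must be done. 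The remainder of your outline --- well-definedness of $S$, and the bookkeeping toolkit of Remark \ref{troca}, Lemma \ref{3traingulos}, Remark \ref{obs_ordem} and Lemma \ref{lema_basic} --- matches what the paper actually uses.
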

	
	\begin{proof}
		By Theorem \ref{principal}, we just need to verify the three conditions that refer to the antipode map, namely 
		\begin{eqnarray}
			\label{antipoda_1}    (x\overline{\underline{\#}} h)_1 \ S((x\overline{\underline{\#}} h)_2) \ = \ \varepsilon_t(x\overline{\underline{\#}} h) \\
			\label{antipoda_2}   S((x\overline{\underline{\#}} h)_1) \  (x\overline{\underline{\#}} h)_2 \ =\ \varepsilon_s(x\overline{\underline{\#}} h) \\
			\label{antipoda_3}    S((x\overline{\underline{\#}} h)_1) \ (x\overline{\underline{\#}} h)_2 \ S((x\overline{\underline{\#}} h)_3) \ = \ S(x\overline{\underline{\#}} h).
		\end{eqnarray}
		
		For condition \eqref{antipoda_1}, we have
		\begin{eqnarray*}
			& \ & (x\overline{\underline{\#}} h)_1 \ S((x\overline{\underline{\#}} h)_2) \\
			& = & (x_1 \due {h_1}^0) \ S(x_2 {h_1}^1 \due h_2) \\
			& = & (x_1 \due {h_1}^0) (1_A \due S_H ({h_2}^0))( S_A(x_2{h_1}^1{h_2}^1) \due 1_H) \\
			& = & (x_1 \due {h^0}_1)(1_A\due S_H({h^0}_2))(S_A(x_2h^1) \due 1_H) \\
			& = & (x_1({h^0}_1 \cdot 1_A) \due {h^0}_2S_H({h^0}_3))(S_A(x_2h^1)\due 1_H) \\
			& = & (x_1({h^0}_1 \cdot 1_A) \due \varepsilon_t({h^0}_2))(S_A(x_2h^1)\due 1_H) \\
			& \stackrel{\eqref{4.12}}{=} & (x_1(1_{H1}{h^0} \cdot 1_A) \due 1_{H2})(S_A(x_2h^1)\due 1_H) \\
			& = & x_1(1_{H1}{h^0} \cdot 1_A) (1_{H2} \cdot S_A(x_2h^1)) \due 1_{H3} \\
			& = & x_1(1_{H1} \cdot 1_A) (1_{H2}h^0 \cdot 1_A)( 1_{H3} \cdot S_A(x_2h^1)) \due 1_{H4} \\
			& \stackrel{\ref{lema_2_1_9_Glauber}}{=} & (1_{H1} \cdot x_1) (1_{H3}h^0 \cdot 1_A)( 1_{H2} \cdot S_A(x_2)S_A(h^1)) \due 1_{H4} \\
			& = & ( 1_{H1} \cdot x_1S_A(x_2)S_A(h^1))(1_{H2}h^0 \cdot 1_A) \due 1_{H3} \\
			& \stackrel{\ref{lema_2_1_9_Glauber}}{=} & \varepsilon_t(x)S_A(h^1)(1_{H1} \cdot 1_A)(1_{H2}h^0 \cdot 1_A) \due 1_{H3} \\
			& = & \et(x)S_A(h^1)(1_{H1}h^0 \cdot 1_A) \due 1_{H2} \\
			& = & \et(x)S_A(h^1)(1_{H1}h^0 \cdot 1_A)\es({1_{H2}}^1(1_{H3} \cdot 1_A)) \underline{\#} {1_{H2}}^0 \\
			& = & \et(x)S_A(h^1)\es({1_{H3}}^1)(1_{H1} \cdot 1_A)(1_{H2}h^0 \cdot 1_A)\es({1_{H4}} \cdot 1_A) \underline{\#} {1_{H3}}^0 \\
			&  \stackrel{\ref{lema_basic} (i)}{=}  & \et(x)S_A(h^1)\es({1_{H2}}^1)(1_{H1}h^0 \cdot 1_A) \underline{\#} {1_{H2}}^0 \\
			& \stackrel{\eqref{condicao_7}}{=}& \et(x)\et(h^1)\es({1_{H2}}^1){1_{H1}}^1\et({1_{H1}}^0h^0 \cdot 1_A) \underline{\#} {1_{H2}}^0.
		\end{eqnarray*}
		On the other hand,
		\begin{eqnarray*}
			& \ & \et(x \due h ) \\
			& = & \varepsilon((1_A \due 1_H)_1 ( x \due h)) ( 1_A \due 1_H)_2 \\
			& = & \varepsilon((1_{A1}\due {1_{H1}}^0)(x \due h)) 1_{A2} {1_{H1}}^1 \due 1_{H2} \\
			& = &  \varepsilon (1_{A1}({{1_{H1}}^0}_1 \cdot x) \due {{1_{H1}}^0}_2h) 1_{A2}{1_{H1}}^1 \due 1_{H2} \\
			& = &  \varepsilon \left(1_{A1}({{1_{H1}}^0}_1 \cdot x) (({{1_{H1}}^0}_2h_1)^0 \cdot 1_A) ({{1_{H1}}^0}_2h_1)^1 ({{1_{H1}}^0}_3h_2 \cdot 1_A)\right) \\
			& \ &1_{A2}{1_{H1}}^1 \due 1_{H2} \\
			& \stackrel{\eqref{troca}, \ref{weak matched pair}(iv), \eqref{4.6}}{=} &  \varepsilon ({{{1_{H1}}^0}_1}^0 \cdot x_1) \ \varepsilon (1_{A1}({{{1_{H1}}^0}_2}^0{h_1}^0 \cdot 1_A) {{{1_{H1}}^0}_1}^1 {{{1_{H1}}^0}_2}^1 (h_2 \cdot 1_A) \\
			& \ & ({{1_{H1}}^0}_3 \cdot x_2{h_1}^1) )  1_{A2}{1_{H1}}^1 \due 1_{H2} \\
			& = &  \varepsilon ({{1_{H1}}^{00}} \cdot x_1) \ \varepsilon (1_{A1}({{{1_{H2}}^0}_1}^0{h_1}^0 \cdot 1_A) ( {{{1_{H1}}^{01}}} {{{1_{H2}}^0}_1}^1) (h_2 \cdot 1_A)  \\
			& \ & ({{1_{H2}}^0}_2 \cdot x_2{h_1}^1) ) 1_{A2}{1_{H1}}^1 {1_{H2}}^1 \due 1_{H3} \\
			& = &  \varepsilon ({{1_{H1}}^{00}} \cdot x_1) \ \varepsilon (1_{A1}({1_{H2}}^{00} {h_1}^0 \cdot 1_A)  {{{1_{H1}}^{01}}} {{1_{H2}}^{01}} (h_2 \cdot 1_A)  \\
			& \ & ({1_{H3}}^0 \cdot x_2{h_1}^1) ) 1_{A2}{1_{H1}}^1 {1_{H2}}^1{1_{H3}}^1 \due 1_{H4} \\
			& = &  \varepsilon ({{1_{H1}}^{0}} \cdot x_1) \ \varepsilon (1_{A1}({1_{H2}}^{0} {h_1}^0 \cdot 1_A)  {{{1_{H1}}^{1}}_1} {{{1_{H2}}^1}_1} (h_2 \cdot 1_A)  \\
			& \ & ({1_{H3}}^0 \cdot x_2{h_1}^1) ) 1_{A2}{{1_{H1}}^1}_2 {{1_{H2}}^1}_2 {1_{H3}}^1 \due 1_{H4} \\
			& = &  \varepsilon ({{1_{H1}}^{0}} \cdot x_1) \ \varepsilon \left(1_{A1}({1_{H2}}^{0} {h_1}^0 \cdot 1_A)  (h_2 \cdot 1_A) ({1_{H3}}^0 \cdot x_2{h_1}^1) \right) \\
			& & 1_{A2}{{1_{H1}}^1} {{1_{H2}}^1} {1_{H3}}^1 \due 1_{H4} \\
			& \stackrel{\eqref{4.6}}{=} &  \varepsilon (\varepsilon_s({{{1_{H1}}^{0}}_1} \cdot x_1)) \ \varepsilon \left(1_{A1}({{1_{H1}}^{0}}_2 {h_1}^0 \cdot 1_A)  (h_2 \cdot 1_A) ({1_{H2}}^0 \cdot x_2{h_1}^1) \right) \\
			& & 1_{A2}{{1_{H1}}^1} {{1_{H2}}^1} \due 1_{H3} \\
			& \stackrel{\ref{weak matched pair}(iv)}{=} &  \varepsilon (\varepsilon_s(x_1)\varepsilon_s({{{1_{H1}}^{0}}_1} \cdot 1_A)) \ \varepsilon (1_{A1}\varepsilon_s(x_2)\varepsilon_s(h_1^1)\varepsilon_s({{1_{H1}}^{0}}_2 {h_1}^0 \cdot 1_A) \\
			& &   \varepsilon_s(h_2 \cdot 1_A) ({{1_{H1}}^0}_3 \cdot 1_A) ) 1_{A2}{{1_{H1}}^1} \due 1_{H2} \\
			& \stackrel{\eqref{4.6}, \ \ref{weak matched pair}(iv)}{=} &  \varepsilon(1_{A1} x \varepsilon_s({{1_{H1}}^{0}}_1 \cdot 1_A) \varepsilon_s({{1_{H1}}^{0}}_2 \cdot 1_A) \varepsilon_s({{1_{H1}}^{0}}_3 \cdot 1_A)  \varepsilon_s({h_1}^1) \\
			& & \varepsilon_s(h_2 \cdot 1_A) \varepsilon_s({{h_{1}}^0} \cdot 1_A) ) \ 1_{A2}{{1_{H1}}^1} \due 1_{H2} \\
			& \stackrel{\eqref{4.6}, \ \ref{lema_basic}(ii)}{=} &  \varepsilon\left(1_{A1} x {h_1}^1 ({{1_{H1}}^{0}} \cdot 1_A)   (h_2 \cdot 1_A) ({{h_{1}}^0} \cdot 1_A) \right)  1_{A2}{{1_{H1}}^1} \due 1_{H2} \\ 
			& = &  \varepsilon\left(x_1 ({{h_{1}}^0} \cdot 1_A)\right) \ \varepsilon\left( 1_{A1} x_2 {h_1}^1 ({{1_{H1}}^{0}} \cdot 1_A)   (h_2 \cdot 1_A)  \right)  1_{A2}{{1_{H1}}^1} \due 1_{H2} \\ 
			& \stackrel{\ref{3traingulos}}{=} &  \varepsilon\left(x_1 ({h^0} \cdot 1_A)\right) \ \varepsilon\left( 1_{A1} x_2 {h}^1 ({{1_{H1}}^{0}} \cdot 1_A) \right)  1_{A2}{{1_{H1}}^1} \due 1_{H2} \\
			& = &  \varepsilon\left(1_{A1} x h^1 ({h^0} \cdot 1_A) ({{1_{H1}}^{0}} \cdot 1_A) \right)  1_{A2}{{1_{H1}}^1} \due 1_{H2}  \\
			& = &  \varepsilon\left(1_{A1} x h^1 ({h^0} \cdot 1_A) ({{1_{H1}}^{0}} \cdot 1_A) \right)  1_{A2}{{1_{H1}}^1} \varepsilon_s\left( {1_{H2}}^1 ({1_{H3}} \cdot 1_A) \right)\underline{\#} {1_{H2}}^0  \\
			& = &  \varepsilon_t\left(x h^1 ({h^0} \cdot 1_A) ({{1_{H1}}^{0}} \cdot 1_A) \right)  {{1_{H1}}^1} 1_{A1} \ \varepsilon\left( 1_{A2} {1_{H2}}^1 ({1_{H3}} \cdot 1_A) \right) \\ 
			& \ &\left({{1_{H2}}^{0}}_1 \cdot 1_A\right) \#  {{1_{H2}}^0}_2  \\
			& = &   \varepsilon_t\left(x h^1 ({h^0} \cdot 1_A) ({{1_{H1}}^{0}} \cdot 1_A) \right)  {{1_{H1}}^1} 1_{A1} \ \varepsilon\left( 1_{A2} {1_{H2}}^1 {1_{H3}}^1 ({1_{H4}} \cdot 1_A) \right)\\
			& \ & \left({{1_{H2}}^{0}} \cdot 1_A\right) \# {{1_{H3}}^0}  \\
			& = &   \varepsilon_t\left(x h^1 ({h^0} \cdot 1_A) ({{1_{H1}}^{0}} \cdot 1_A) \right)  {{1_{H1}}^1} 1_{A1} \left({{1_{H2}}^{0}} \cdot 1_A\right) \\
			& & \varepsilon\left( 1_{A2} {1_{H2}}^1  ({1_{H3}} \cdot 1_A) {1_{H4}}^1 \right)  \# {{1_{H4}}^0}  \\
			& \stackrel{ \ref{3traingulos}}{=} &   \varepsilon_t\left(x h^1 ({h^0} \cdot 1_A) ({{1_{H1}}^{0}} \cdot 1_A) \right)  {{1_{H1}}^1} \varepsilon_s\left({1_{H2}}^1  {1_{H3}}^1 \right) \left({{1_{H2}}^{0}} \cdot 1_A\right) \#  {{1_{H3}}^0}  \\
			& = &   \varepsilon_t\left(x h^1 ({h^0} \cdot 1_A) ({{1_{H1}}^{0}} \cdot 1_A) \right)  {{1_{H1}}^1} \ \varepsilon_s\left({1_{H2}}^1 \right) \underline{\#} {{1_{H2}}^0}  \\
			& \stackrel{ \ref{weak matched pair}(iv) }{=} &   \varepsilon_t(x) \ \varepsilon_t(h^1)  \ \varepsilon_s({{1_{H2}}^{1}})  \ \varepsilon_t\left({{1_{H1}}^{0}}{h^0} \cdot 1_A \right)  {{1_{H1}}^1} \underline{\#} {{1_{H2}}^0}.
		\end{eqnarray*}
		So, the equality \eqref{antipoda_1} holds.
		
		Now, we verify the condition \eqref{antipoda_2}. By one hand, we have
		\begin{eqnarray*}
			& \ & S\left( (x\overline{\underline{\#}} h)_1 \right) \ (x\overline{\underline{\#}} h)_2 \\
			& = & S \left( x_1 \due {h_1}^0 \right) \ (x_2 {h_1}^1 \due h_2) \\
			& = & \left( 1_A \due S_H\left({h_1}^{00}\right)\right) \left( S_A\left(x_1{h_1}^{01}\right) \due 1_H \right) \left(x_2 {h_1}^1 \due h_2 \right) \\
			& = & \left( 1_A \due S_H\left({h_1}^{00}\right)\right) \left( S_A\left(x_1{h_1}^{01}\right)   \left(1_{H1} \cdot x_2 {h_1}^1 \right)\due 1_{H2}h_2 \right) \\
			& \stackrel{\ref{lema_2_1_9_Glauber} (ii)}{=} & \left( 1_A \due S_H\left({h_1}^{00}\right)\right) \left(  S_A\left({h_1}^{01}\right)S_A\left(x_1 \right) x_2 {h_1}^1 \left(1_{H1} \cdot 1_A \right) \due 1_{H2}h_2 \right) \\
			& = & \left( 1_A \due S_H\left({h_1}^{0}\right)\right) \left(  \varepsilon_s\left(x\right)S_A\left({{h_1}^{1}}_1\right){{h_1}^1}_2 \left(1_{H1} \cdot 1_A \right)\left(1_{H2}h_2 \cdot 1_A \right) \overline{\#} 1_{H3}h_3 \right) \\
			& = & \left( 1_A \due S_H\left({h_1}^{0}\right)\right) \left(  \varepsilon_s\left(x\right) \varepsilon_s\left({h_1}^{1}\right) \left(1_{H1}h_2 \cdot 1_A \right) \overline{\#} 1_{H2}h_3 \right) \\
			& = & \left( 1_A \due S_H\left({h_1}^{0}\right)\right) \left(  \varepsilon_s\left(x\right) \varepsilon_s\left({h_1}^{1}\right)  \due h_2 \right) \\
			& = & \left( S_H\left({{h_1}^{0}}_2 \right)\cdot   \varepsilon_s\left(x\right) \varepsilon_s\left({h_1}^{1}\right) \right) \due S_H \left( {{h_1}^{0}}_1 \right)h_2  \\
			& \stackrel{\ref{lema_basic} (iii)}{=} & \varepsilon_s\left(x {h_1}^{1}\right) \left( S_H\left({{h_1}^{0}}_2 \cdot  1_A \right)\right)  \due S_H \left( {{h_1}^{0}}_1 \right)h_2 \\
			& = & \varepsilon_s\left(x {h_1}^{1}\right)  \left(\varepsilon_t\left(\left(S_H\left({{h_1}^{0}}\right)_2\right) \right) \cdot  1_A \right)  \due S_H \left({{h_1}^{0}}\right)_1 h_2 \\
			& \stackrel{\eqref{4.12}}{=} & \varepsilon_s\left(x\right) \varepsilon_s\left({h_1}^{1}\right)  \left(1_{H2} \cdot 1_A \right)  \due 1_{H1} S_H \left({{h_1}^{0}}\right) h_2 \\
			& = & \varepsilon_s\left(x\right) \varepsilon_s\left({h_1}^{1}\right)  \left(1_{H3} \cdot 1_A \right) \left(1_{H1}\left(S\left({h_1}^0\right)h_2\right)_1 \cdot 1_A \right)  \overline{\#} 1_{H2} \left(S_H \left({{h_1}^{0}}\right) h_2\right)_2 \\
			& = & \varepsilon_s\left(x\right) \varepsilon_s\left({h_1}^{1}\right) \due S_H \left({{h_1}^{0}}\right) h_2 \\
			& = & \varepsilon_s\left(x\right) \varepsilon_s\left({h_1}^{1}\right) \left(S_H \left({{h_1}^{0}}\right)_1 h_2 \cdot 1_A \right) \overline{\#} S_H \left({{h_1}^{0}}\right)_2 h_3 \\
			& = & \varepsilon_s\left(x\right) \varepsilon_s\left({h_1}^{1}\right) \left(S_H \left({{h_1}^{0}}_1\right) \cdot \left(h_2 \cdot 1_A \right) \right) \overline{\#} S_H \left({{h_1}^{0}}_2\right) h_3 \\
			& = & \varepsilon_s\left(x\right) \varepsilon_s\left({h_1}^{1}\right) \left(S_H \left({{h_1}^{0}}_1\right) \cdot 1_A \right) \left(S_H \left({{h_1}^{0}}_2\right) h_2 \cdot 1_A \right) \overline{\#} S_H \left({{h_1}^{0}}_3\right) h_3 \\
			& = & \varepsilon_s \left( x \right) \varepsilon_s \left( {h_1}^{1} \right) \left( S_H \left( {{h_1}^{0}}_1 \right) \cdot 1_A \right)  \due S_H \left( {{h_1}^{0}}_2 \right) h_2 \\
			& = & \varepsilon_s\left(x\right) \varepsilon_s\left({h_1}^{1}{h_2}^{1}\right) \left(S_H \left({{h_1}^{0}}\right) \cdot 1_A \right)  \due S_H \left({{h_2}^{0}}\right) h_3 \\
			& \stackrel{\eqref{condicao_8}}{=} & \varepsilon_s\left(x\right) \varepsilon_s\left(h^{1} {1_{H2}}^{1}\right) \varepsilon_s\left({{h}^0} 1_{H1} \cdot 1_A \right)  \due {1_{H2}}^0.
		\end{eqnarray*}
		On the other hand,
		\begin{eqnarray*}
			& \ & \es(x \due h ) \\
			& = & (1_A \due 1_H)_1  \ \varepsilon\left(( x \due h) ( 1_A \due 1_H)_2\right) \\
			& = & 1_{A1}\due {1_{H1}}^0 \ \varepsilon\left( \left(x \due h\right) \left( 1_{A2} {1_{H1}}^1 \due 1_{H2} \right) \right)\\
			& = & 1_{A1}\due {1_{H1}}^0 \ \varepsilon\left( x \left( h_1 \cdot 1_{A2} {1_{H1}}^1 \right) \due h_2 1_{H2} \right)\\
			& = & 1_{A1}\due {1_{H1}}^0 \ \varepsilon( x \left( h_1 \cdot {1_{H1}}^1 \right) \left( h_2 \cdot 1_{A2}\right)_2 \left( \left(h_3 1_{H2}\right)^0 \cdot 1_A\right) \left(h_3 1_{H2}\right)^1 \\
			& & \left(h_4 1_{H3} \cdot 1_A \right) ) \varepsilon \left( \left( h_2 \cdot 1_{A2} \right)_1 \right)\\
			& \stackrel{\ref{troca}}{=} & 1_{A1}\due {1_{H1}}^0 \ \varepsilon( x \left( h_1 \cdot {1_{H1}}^1 \right) \left( {h_3}^0 {1_{H2}}^0 \cdot 1_{A}\right) {h_2}^1 {h_3}^1 \left(h_4 \cdot 1_{A3} {1_{H2}}^1 \right) \\
			& & \left(h_5 1_{H3} \cdot 1_A \right) ) \varepsilon \left( {h_2}^0 \cdot 1_{A2} \right)\\
			& \stackrel{\ref{weak matched pair} (iv), \eqref{4.6}}{=} & 1_{A1}\due {1_{H1}}^0 \ \varepsilon( 1_{A2} x \es\left({1_{H1}}^1\right) \es\left( h_1 \cdot 1_A \right) \es\left( {h_3}^0 \cdot 1_{A}\right) \es\left({1_{H2}}^0 \cdot 1_A \right) \\
			& &  \es\left({h_2}^1 {h_3}^1\right) 
			\es\left(h_4 \cdot 1_A\right) \es\left({1_{H2}}^1 \right) \es\left(h_5 \cdot 1_A \right) \es\left(1_{H3} \cdot 1_A\right) \\
			& &  \es\left({h_2}^0 \cdot 1_A\right) )\\
			& \stackrel{\ref{lema_basic} (ii)}{=} & 1_{A1}\due {1_{H1}}^0 \ \varepsilon( 1_{A2} x \es\left({1_{H1}}^1 {1_{H2}}^1 \right) \es\left( h_1 \cdot 1_A \right) \es\left( {h_2}^0 \cdot 1_{A}\right)  \\
			& &\es\left({1_{H2}}^0 \cdot 1_A \right) \es\left({h_2}^1\right)   \left. \es\left(h_3 \cdot 1_A\right) \es\left(1_{H3} \cdot 1_A\right) \right)\\
			& \stackrel{\ref{lema_basic} (ii)}{=} & 1_{A1}\due {{1_{H1}}^0}_1 \ \varepsilon( 1_{A2} x \es\left({1_{H1}}^1 \right) \es\left( h_1 \cdot 1_A \right) \es\left( {h_2}^0 \cdot 1_{A}\right)\\
			& &\es\left({{1_{H1}}^0}_2 \cdot 1_A \right) \es\left({h_2}^1\right) \es\left(1_{H2} \cdot 1_A\right) )\\
			& \stackrel{\eqref{4.15}}{=} &  \es\left(x\right) \es\left({1_{H1}}^1 \right) \es\left( h_1 \cdot 1_A \right) \es\left( {h_2}^0 \cdot 1_{A}\right) \es\left({h_2}^1\right) \es\left({{1_{H1}}^0}_3 \cdot 1_A \right)  \\
			&  & \es\left(1_{H2} \cdot 1_A\right)   \left({{1_{H1}}^0}_1 \cdot 1_A \right) \overline{\#} {{1_{H1}}^0}_2  \\
			& \stackrel{\eqref{lema_basic}(i)}{=} &  \es\left(x\right) \es\left({1_{H1}}^1 \right) \es\left( h_1 \cdot 1_A \right) \es\left( {h_2}^0 \cdot 1_{A}\right) \es\left({h_2}^1\right) \left({{1_{H1}}^0}_1 \cdot 1_A \right) \\
			& & \es\left(1_{H2} \cdot 1_A\right) \overline{\#} {{1_{H1}}^0}_2  \\
			& = &  \es\left(x\right) \es\left( {h_1}^1 \left( h_2 \cdot 1_A \right)\right) \es\left( {h_1}^0 \cdot 1_{A}\right) \es\left( {1_{H1}}^1 {1_{H2}}^1 \left(1_{H3} \cdot 1_A\right) \right)\\
			& & \left({1_{H1}}^0 \cdot 1_A \right)   \overline{\#} {1_{H2}}^0  \\
			& \stackrel{\ref{3traingulos}, \ \eqref{4.20}}{=} &  \es\left(x\right) \es\left( h^1\right) \es\left( h^0 \cdot 1_A\right) \es\left( {1_{H1}}^1 \left(1_{H2} \cdot 1_A\right) \right) \es\left( {1_{H3}}^1 \right)\left({1_{H1}}^0 \cdot 1_A \right)   \\
			& &  \overline{\#} {1_{H3}}^0  \\
			& \stackrel{\ref{weak matched pair} (iv)}{=} &  \es\left(x\right) \es\left( h^1 {1_{H2}}^1 {1_{H3}}^1\right) \es\left( h^0 1_{H1} \cdot 1_A\right) \left({1_{H2}}^0 \cdot 1_A \right)   \overline{\#} {1_{H3}}^0  \\
			& = &  \es\left(x\right) \es\left( h^1 {1_{H2}}^1 \right) \es\left( h^0 1_{H1} \cdot 1_A \right) \left({{1_{H2}}^0}_1 \cdot 1_A \right)   \overline{\#} {{1_{H2}}^0}_2  \\
			& = & \varepsilon_s\left(x\right) \varepsilon_s\left(h^{1} {1_{H2}}^{1}\right) \varepsilon_s\left({{h}^0} 1_{H1} \cdot 1_A \right)  \due {1_{H2}}^0.
		\end{eqnarray*}
		Thus, the equality \eqref{antipoda_2} holds.
		
		Finally, we check the condition \eqref{antipoda_3}. Then,
		\begin{eqnarray*}
			& &  S\left( \left(x \due h\right)_1 \right) \ \left(x \due h\right)_2 \ S\left(\left(x \due h\right)_3\right)  \\
			& = &  S\left( \left(x \due h\right)_1 \right) \ \et\left(\left(x \due h\right)_2\right)  \\
			& = &  S\left(x_1 \due {h_1}^0  \right) \ \et\left(x_2 {h_1}^1 \due h_2 \right)  \\
			& = &  S\left(x_1 \due {h_1}^0  \right) \ \left( \et\left(x_2 {h_1}^1\right) S_A\left({h_2}^1\right) \left(1_{H1}{h_2}^0 \cdot 1_A \right) \due 1_{H2} \right)  \\
			& = &  \left(1_A \due S_H \left({h_1}^{00}\right)  \right) \ \left(S_A\left(x_1 {h_1}^{01}\right) \due 1_H \right) \ ( \et\left(x_2 {h_1}^1\right) S_A\left({h_2}^1\right) \\
			& & \left(1_{H1}{h_2}^0 \cdot 1_A \right) \due 1_{H2} )  \\
			& = &  \left(1_A \due S_H \left({h_1}^{00}\right)  \right) \ (S_A\left(x_1 {h_1}^{01}\right) ( {1_{H1}'} \cdot \et\left(x_2 {h_1}^1\right) S_A\left({h_2}^1\right) \\
			& & \left(1_{H1}{h_2}^0 \cdot 1_A \right) ) \due {1_{H2}'} 1_{H2} )  \\
			& \stackrel{\ref{lema_2_1_9_Glauber} (ii)}{=} &  \left(1_A \due S_H \left({h_1}^{00}\right)  \right) \ (S_A\left(x_1 {h_1}^{01}\right) \ \et\left(x_2 {h_1}^1\right) S_A\left({h_2}^1\right) \\
			& & \left( {1_{H1}'} \cdot  \left(1_{H1}{h_2}^0 \cdot 1_A \right) \right) \due {1_{H2}'} 1_{H2} )  \\
			& = &  \left(1_A \due S_H \left({h_1}^{00}\right)  \right) \ (S_A\left(x_1 {h_1}^{01}\right) \ \et\left(x_2 {h_1}^1\right) S_A\left({h_2}^1\right) \left(1_{H1}{h_2}^0 \cdot 1_A \right) \due \\
			& & 1_{H2} )  \\
			& = &  \left(1_A \due S_H \left({h_1}^0\right)  \right) \ (S_A\left(x_1 {{h_1}^1}_1\right) \ \et\left(x_2 {{h_1}^1}_2\right) S_A\left({h_2}^1\right) \left(1_{H1}{h_2}^0 \cdot 1_A \right) \due \\
			& & 1_{H2} )  \\
			& = &  \left(1_A \due S_H \left({h_1}^0\right)  \right) \ \left(S_A\left(x {h_1}^1 \right) \ S_A\left({h_2}^1\right) \left(1_{H1}{h_2}^0 \cdot 1_A \right) \due 1_{H2} \right)  \\
			& = &  \left(1_A \due S_H \left({h^0}_1\right)  \right) \ \left(S_A\left(x h^1 \right) \ \left(1_{H1}{h^0}_2 \cdot 1_A \right) \due 1_{H2} \right)  \\
			& = &  \left( S_H \left({h^0}_1\right)  \cdot  \left(S_A\left(x h^1 \right) \ \left(1_{H1}{h^0}_3 \cdot 1_A \right) \right) \right)  \due S_H \left({h^0}_2\right) 1_{H2}  \\
			& = &  \left( S_H \left({h^0}_1\right)  \cdot S_A\left(x h^1 \right) \right) \left(S_H\left({h^0}_2\right) 1_{H1}{h^0}_3 \cdot 1_A \right)  \due S_H \left({h^0}_4\right) 1_{H2}  \\
			& = &  \left( S_H \left({h^0}_1\right)  \cdot S_A\left(x h^1 \right) \right) \left(S_H\left({h^0}_2\right)_1 \ 1_{H1}{h^0}_3 \cdot 1_A \right)  \due S_H \left({h^0}_2\right)_2 1_{H2}  \\
			& = &  \left( S_H \left({h^0}_1\right)  \cdot S_A\left(x h^1 \right) \right) \left(S_H\left({h^0}_2\right) {h^0}_3 \cdot 1_A \right)  \due S_H \left({h^0}_4\right) \\
			& = &  \left( S_H \left({h^0}_1\right)  \cdot S_A\left(x h^1 \right) \right) \left( \es\left({h^0}_2\right) \cdot 1_A \right)  \due S_H \left({h^0}_3\right) \\
			& \stackrel{\eqref{4.13}}{=} &  \left( S_H \left({h^0}_1 1_{H2}\right)  \cdot S_A\left(x h^1 \right) \right) \left( 1_{H1} \cdot 1_A \right)  \due S_H \left({h^0}_2\right) \\
			& = &  \left( S_H \left(1_{H2}\right) S_H \left({h^0}_1 \right)  \cdot S_A\left(x h^1 \right) \right) \left( 1_{H1} \cdot 1_A \right)  \due S_H \left({h^0}_2\right) \\
			& \stackrel{\eqref{4.39}}{=} &  \left(1_{H1} \  S_H \left(1_{H2}\right) S_H \left({h^0}_1 \right)  \cdot S_A\left(x h^1 \right) \right) \left( 1_{H3} \cdot 1_A \right)  \due S_H \left({h^0}_2\right) \\
			& = &  \left(1_{H1} \  S_H \left(1_{H2}\right) S_H \left({h^0}_1 \right)  \cdot S_A\left(x h^1 \right) \right)  \due S_H \left({h^0}_2\right) \\
			& = &  \left( \et\left(1_H \right) S_H \left({h^0}_1 \right)  \cdot S_A\left(x h^1 \right) \right)  \due S_H \left({h^0}_2\right) \\
			& = &  \left( 1_A \due S_H \left(h^0 \right)  \right) \left( S_A\left(x h^1 \right) \due 1_H \right)   \\
			& = & S \left( x \due h \right).
		\end{eqnarray*}
		Therefore, the equality \eqref{antipoda_3} holds, and so the proof is complete.
	\end{proof}

	\section{Examples and properties}
	
	In this section we present some examples of weak Hopf algebras $A\due H$ as well as some properties about the theory of integrals.
	
	First, recall that a compatible weak matched pair $(H,A)$ gives rise to a weak bialgebra $A \due H$ (see Definitions \ref{novo weak matched pair} and \ref{weak matched pair}, and Theorem \ref{principal}).
	Furthermore, if $H$ and $A$ are Hopf algebras and some conditions hold, then $A \due H$ can be endowed with a weak Hopf algebra structure (see Theorem \ref{antipoda}).
	
	So, in the following, we show that the examples of weak matched pairs presented in Section \ref{sec_weak_matched_pair} are, in fact, compatible matched pairs.
	Moreover, since all the weak bialgebras  we are dealing with are weak Hopf algebras, we also verify that compatibility conditions for the antipode maps hold,  obtaining weak Hopf algebras.
	
	\begin{ex}
		Consider the abelian weak matched pair $(\mathcal{H}^G,\mathcal{H}^G)$ of Example \ref{exemplo_par_combinado_HG}.
		Then $\mathcal{H}^G\due \mathcal{H}^G$ is a weak Hopf algebra.
		Moreover, it is not a (usual) Hopf algebra.
		
		Indeed, the two items of Definition \ref{weak matched pair} are clear, then $(\mathcal{H}^G,\mathcal{H}^G)$  is a compatible matched pair and so  $\mathcal{H}^G\due \mathcal{H}^G$ is a weak bialgebra.
		Also, for $h \in \mathcal{H}^G$, is a routine computation to conclude that both sides of equality \eqref{condicao_7} are reduced to $\frac{1}{|G|^2} \sum_{g,a} hga \otimes g^{-1} \otimes a^{-1}$, and similarly both sides of equality \eqref{condicao_8} are $\frac{1}{|G|} \sum_{g} hg \otimes g^{-1}.$
		
		Hence, by  Theorem \ref{antipoda} we conclude that $\mathcal{H}^G\due \mathcal{H}^G$ is a weak Hopf algebra.
		Moreover, since $x \due h = \frac{1}{|G|}\sum_{g \in G} xhg \# g^{-1}$, we get on the one hand
		\begin{equation*}
			\Delta (1 \due 1)  =  1_1 \due 1_1' \otimes 1_2 1_2' \due 1_3'=   \frac{1}{|G|^3} \sum_{g, a , b \in G} g a \# a^{-1} \otimes g^{-1}b \# b^{-1}  
		\end{equation*}
		and, on the other hand,
		\begin{equation*}
			1 \due 1 \otimes 1 \due 1 = \frac{1}{|G|^2} \sum_{g, h \in G} g \# g^{-1} \otimes h \# h^{-1}.
		\end{equation*}
		Then, $\Delta (1 \due 1)  \neq  1 \due 1 \otimes 1 \due 1$ and so this weak Hopf algebra is not a (usual) Hopf algebra.
		In fact, it can be concluded that $\mathcal{H}^G \due \mathcal{H}^G \simeq \mathcal{H}^{\frac{G \times G}{N}}$, where $N = \{(g,g^{-1}), \ | g \ \in G \}.$
	\end{ex} 
	
	\begin{ex} \label{ex_smashlambdaz}
		Assume that $(H,A)$ of Example \ref{exemplo_par_combinado_lambda_z} is an abelian weak matched pair, that is, $H$ is cocommutative, $A$ is commutative, $H$ acts on $A$ via $\lambda \in H^*$ and $A$ coacts on $H$ via $z \in A$.
		Then $A\due H$ is a weak Hopf algebra.
		Moreover, we have $A\due H = Az \otimes  H_\lambda$, where $Az = \{xz \ | \ x \in A \}$ and $H_\lambda=\{ \lambda(h_1)h_2 \ | \ h\in H \}$.
		Since both $Az$ and $H_\lambda$ are Hopf algebras, we conclude that $A \due H$ is a (usual) Hopf algebra.
		
		Indeed, clearly $(H,A)$  is a compatible matched pair and so  $A\due H$ is a weak bialgebra.
		Now, in order to verify conditions \eqref{condicao_7} and \eqref{condicao_8}, note that, in this case, $\varepsilon_s(z)= \varepsilon_t(z)=S(z)=z, z^2=z$, and $\lambda(h)=\lambda(\varepsilon_t(h))=\lambda(\varepsilon_s(h))$.
		Then, for any $h \in H$, 
		\begin{eqnarray*}
			(1_{H1}h^0\cdot 1_A)\otimes S_A(h^1)\varepsilon_s({1_{H2}}^1)\otimes {1_{H2}}^0 & = & \lambda(1_{H1}h) 1_A\otimes S_A(z)\varepsilon_s(z)\otimes {1_{H2}} \\
			& = & \lambda(1_{H1}h) 1_A\otimes z \otimes {1_{H2}}
		\end{eqnarray*}
		and
		\begin{eqnarray*}
			& & \varepsilon_t({1_{H1}}^0h^0\cdot 1_A) \otimes \varepsilon_t(h^1)\varepsilon_s({1_{H2}}^1){1_{H1}}^1\otimes {1_{H2}}^0 \\
			&=& \varepsilon_t(\lambda({1_{H1}}h)1_A) \otimes \varepsilon_t(z)\varepsilon_s(z)z\otimes 1_{H2}\\
			& = & \lambda({1_{H1}}h)1_A \otimes z \otimes 1_{H2}.
		\end{eqnarray*}
		Thus \eqref{condicao_7} holds.
		Similarly, for equality \eqref{condicao_8},
		\begin{eqnarray*}
			\varepsilon_s({h_1}^1{h_2}^1)(S_H({h_1}^0)\cdot 1_A)\otimes S_H({h_2}^0)h_3& = & \varepsilon_s(z^2)\lambda(S_H(h_1) ) 1_A \otimes S_H(h_2)h_3\\
			& = & z\lambda(S_H(h_1) )\otimes \varepsilon_s(h_2)\\
			& = & z\lambda(\varepsilon_t(S_H(h_1)))\otimes \varepsilon_s(h_2)\\
			& \stackrel{(\ref{4.34})}{=} & z\lambda(\varepsilon_t(\varepsilon_s(h_1)))\otimes \varepsilon_s(h_2)\\
			& \stackrel{(\ref{4.13})}{=} & z\lambda(h 1_{H1}) \otimes 1_{H2}
		\end{eqnarray*}
		and
		\begin{eqnarray*}
			\varepsilon_s(h^1{1_{H2}}^1) \varepsilon_s(h^0{1_{H1}}\cdot 1_A) \otimes {1_{H2}}^0 & = & \varepsilon_s(z^2) \varepsilon_s(\lambda(h 1_{H1} ) 1_A) \otimes 1_{H2}\\ 
			& = & z \lambda(h 1_{H1}) \otimes 1_{H2}.
		\end{eqnarray*}

		Hence, by  Theorem \ref{antipoda} we have that $A \due H$ is a weak Hopf algebra.
		Moreover, since $x \due h = x \varepsilon_s({h_1}^{1}{h_2}^{1}( h_3 \cdot 1_A))({h_1}^{0} \cdot 1_A) \# {h_2}^{0} = x \varepsilon_s(z^2\lambda(h_3))\lambda(h_1) \# h_2 = x z \# \lambda(h_1)h_2$,
		it is easy to conclude that  $A\due H = Az \otimes  H_\lambda$.
		
		In particular, consider $A=H= \mathring{\cup}_{i=1}^{n} H_i$ the finite disjoint union of Hopf algebras, with action and coaction given by $\lambda=\varepsilon_i$ and $z=1_{H_j}$, respectively.
		Then, $A\due H=H_j\otimes H_i$.
	\end{ex}
	
	\begin{ex}
		Consider $(H',A')$ the weak matched pair of Example \ref{ex_kap2}.
		Also, assume that $A$ is commutative and $H$ is cocommutative. 
		Then, since $x \due h= x \# h,$ $\1_{A'} \due h =  \mathbb{e}_{A} \# h,$ $x \due \1_{H'} = x \# \mathbb{e}_H$ and $\1_{A'} \due \1_{H'} = \mathbb{e}_{A} \#  \mathbb{e}_{H},$
		for all $x \in A,$ $h \in H$, we conclude $A' \due H'=A\# H$ is a (usual) Hopf algebra.
	\end{ex}
	
	\medskip
	
	Finally, let us recall the concept of integrals for weak Hopf algebras and the weak version of the well known Maschke Theorem (see \cite{Gabintegral}).

	\begin{defi} \label{def_integral} Consider $H$ a weak Hopf algebra. A left (resp. right) integral in $H$ is an element  $\alpha\in H$ such that $h \alpha=\varepsilon_t(h)\alpha$  (resp. $\alpha h=\alpha\varepsilon_s(h))$, for  all  $h\in H$. We denote $\int_\ell^H$ (resp. $\int_r^H$) the space of left (resp. right) integrals in $H$.
	\end{defi}
	
	\begin{pro} \label{pro_semisimple}
		Let $H$ be a finite weak Hopf algebra, then the following conditions are equivalent:
		\begin{itemize}
			\item[(i)] $H$ is semisimple,
			\item[(ii)] $H$ is separable,
			\item[(iii)] there exists $\alpha\in\int_\ell^H$ such that $\varepsilon_t(\alpha)=1_H$.
		\end{itemize}
	\end{pro}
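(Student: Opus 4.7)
I would prove the three implications $(iii) \Rightarrow (ii) \Rightarrow (i) \Rightarrow (iii)$, since this cycle mirrors the classical weak Maschke-type arguments for weak Hopf algebras.

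For $(iii) \Rightarrow (ii)$, the plan is to exhibit an explicit separability idempotent in $H \otimes_{H_t} H$ built from $\alpha$. The natural candidate is $e = \alpha_1 \otimes S(\alpha_2)$. The multiplicative condition $m(e) = 1_H$ is then immediate from the antipode axiom $\alpha_1 S(\alpha_2) = \varepsilon_t(\alpha)$ together with the assumption $\varepsilon_t(\alpha) = 1_H$. The bimodule condition $(h \otimes 1)e = e(1 \otimes h)$ will be obtained from the left integral property $h\alpha = \varepsilon_t(h)\alpha$ combined with the identities relating $\varepsilon_t$, $S$, $\Delta$ gathered in equalities \eqref{4.38}--\eqref{4.39} and the antipode compatibility $S(h)_1 \otimes S(h)_2 = S(h_2) \otimes S(h_1)$.

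The step $(ii) \Rightarrow (i)$ is the classical statement that a separable finite-dimensional $\Bbbk$-algebra is semisimple: the separability idempotent allows one to split every short exact sequence of $H$-modules via the averaging $m \mapsto e^{(1)} \cdot \sigma(e^{(2)} \cdot m)$, so nothing weak-Hopf-specific is needed here.

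The delicate direction is $(i) \Rightarrow (iii)$, which is where I expect the main obstacle. Here the strategy is a Maschke-type averaging over the surjection $\varepsilon_t : H \twoheadrightarrow H_t$, viewed as a morphism of left $H$-modules (with $H_t$ carrying the action $h \cdot z = \varepsilon_t(hz)$). If $H$ is semisimple, this surjection splits by a left $H$-linear map $\sigma : H_t \to H$, and I would then define $\alpha := \sigma(1_H)$. Left $H$-linearity will force $h\alpha = \sigma(\varepsilon_t(h)) = \varepsilon_t(h)\alpha$ (after a short check using that $\sigma$ is $H_t$-linear by restriction), giving $\alpha \in \int_\ell^H$; and $\varepsilon_t(\alpha) = \varepsilon_t(\sigma(1_H)) = 1_H$ since $\sigma$ is a section of $\varepsilon_t$. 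The subtle points will be (a) checking that $\varepsilon_t$ really is left $H$-linear for the correct $H$-action on $H_t$, and (b) identifying, among all possible sections, one that produces a genuine left integral rather than something weaker. Both hinge on the properties \eqref{4.12}--\eqref{4.15} and \eqref{4.19}--\eqref{4.20} of $\varepsilon_t$ and $\varepsilon_s$; once the correct module structure on $H_t$ is fixed, the rest should reduce to routine manipulation.

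Since this is a well-known result stated in \cite{Gabintegral}, I would reference that source for the full details of the third implication and keep only the constructive parts of $(iii) \Rightarrow (ii)$ explicit in the paper, so as not to reproduce classical material.
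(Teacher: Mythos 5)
The paper does not prove this proposition at all: it is recalled verbatim from \cite{Gabintegral} (B\"ohm--Nill--Szlach\'anyi) as background, so there is no internal proof to compare against. Your outline is essentially the standard argument from that reference and is sound: $\alpha_1\otimes S(\alpha_2)$ is indeed the right separability candidate (multiplicativity is exactly the antipode axiom plus $\varepsilon_t(\alpha)=1_H$, and the Casimir condition follows from $h\alpha=\varepsilon_t(h)\alpha$ together with \eqref{4.38}), and for $(i)\Rightarrow(iii)$ any $H$-linear section $\sigma$ of $\varepsilon_t:H\to H_t$ (with $h\cdot z=\varepsilon_t(hz)$) works, since $h\sigma(1_H)=\sigma(\varepsilon_t(h))=\varepsilon_t(h)\sigma(1_H)$ — so the worry about choosing a special section is unnecessary. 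The one inaccuracy is writing the separability element in $H\otimes_{H_t}H$: for the equivalence with semisimplicity of $H$ as a $\Bbbk$-algebra you want a separability element in $H\otimes_{\Bbbk}H$, which is where $\alpha_1\otimes S(\alpha_2)$ naturally lives and where your two conditions should be checked; over $H_t$ you would only get the weaker statement that $H$ is separable as an $H_t$-ring.
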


	The following examples and properties were inspired in Hopf algebra theory, in particular, in what was developed in \cite{matchedpair}, regarding partial matched pair.
	
	\begin{ex}
		Consider $\mathcal{H}^G$ the weak Hopf algebra of Example \ref{exemplodogrupo}. Since $\varepsilon_t(g)=\varepsilon_s(g)=g$, for all $g \in G$, we get  $\int_{\ell}^{\mathcal{H}^G} = \int_{r}^{\mathcal{H}^G} = \mathcal{H}^G$ and so $dim \left(\int_{\ell}^{\mathcal{H}^G} \right)= dim \ \mathcal{H}^G = |G|$. 
		Moreover, $\mathcal{H}^G$ is a semisimple weak Hopf algebra, since 
		$\varepsilon_t(1_{\mathcal{H}^G})=1_{\mathcal{H}^G}$.
	\end{ex}
	
	\begin{ex}
		Let $H= \mathring{\cup}_{i=1}^{n} H_i$ be a finite disjoint union of Hopf algebras.
		Since, for $h \in H_i$, $\varepsilon_t(h)=\varepsilon_i(h)1_{H_i}$, we have
		that $\alpha \in H$ is a left integral in $H$ if and only if $\alpha=\sum_{i=1}^n \alpha_i$, where $\alpha_i \in \int_\ell^{H_i}$.
		Moreover
		$\int_\ell^{H}=\Bbbk \left(\mathring{\cup}_{i=1}^{n} \int_\ell^{H_i}\right) = \sum_{i=1}^n \int_\ell^{H_i}$.
		Furthermore, $H$ is semisimple if and only if $H_i$ is semisimple, for some $1\leq i \leq n$.
	\end{ex}
	
	\begin{ex} Let $H$ be a Hopf algebra and consider $H'$ the weak Hopf algebra of the Example \ref{ex_kapla}.
		Since, $\varepsilon_t(h)=\varepsilon(h) \mathbb{e}$, for all $h \in H$, we conclude $\int_\ell^{H'} = \Bbbk (\1 - \mathbb{e}) + \int_\ell^H$.
		Therefore, $H'$ is semisimple if and only if $H$ semisimple. 
	\end{ex}
	
	\begin{lem} Let $(H,A)$ be a weak matched pair.
		If $\alpha\in \int_\ell^A$, then $h\cdot\alpha\in\int_\ell^A$, for all $h\in H$.
	\end{lem}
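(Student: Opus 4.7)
To prove that $h \cdot \alpha \in \int_\ell^A$, I would fix an arbitrary $y \in A$ and verify the defining identity
\[
y(h \cdot \alpha) = \varepsilon_t(y)(h \cdot \alpha).
\]
The natural starting point is the left-integral relation $y\alpha = \varepsilon_t(y)\alpha$ for $\alpha$. Applying the action $h \cdot$ to both sides and invoking the module algebra axiom (MA2) gives the auxiliary identity
\[
(h_1 \cdot y)(h_2 \cdot \alpha) \;=\; h \cdot (y\alpha) \;=\; h \cdot (\varepsilon_t(y)\alpha) \;=\; (h_1 \cdot \varepsilon_t(y))(h_2 \cdot \alpha) \qquad (\star)
\]
valid for every $h \in H$ and $y \in A$.

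Next I would rewrite $y(h \cdot \alpha)$ by artificially inserting $1_H \cdot$ on the outside: by (MA1), (MA2), and (MA3),
\[
y(h \cdot \alpha) \;=\; 1_H \cdot (y(h\cdot\alpha)) \;=\; (1_{H1} \cdot y)(1_{H2} \cdot (h \cdot \alpha)) \;=\; (1_{H1} \cdot y)\bigl((1_{H2}h) \cdot \alpha\bigr),
\]
and similarly
\[
\varepsilon_t(y)(h \cdot \alpha) \;=\; (1_{H1} \cdot \varepsilon_t(y))\bigl((1_{H2}h) \cdot \alpha\bigr).
\]
Thus the task reduces to showing these two expressions agree, which amounts to checking that replacing $y$ by $\varepsilon_t(y)$ under the first factor leaves the product invariant.

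The main obstacle is that $1_{H1} \otimes 1_{H2}h$ is not, in general, the comultiplication of a single element of $H$, so the identity $(\star)$ cannot be applied verbatim. To close this gap I would exploit the weak bialgebra axioms recalled in the preliminaries: $\Delta(1_H) \in H_s \otimes H_t$ (equation \eqref{4.7}), which places $1_{H2} \in H_t$ and enables Lemma \ref{lema_2_1_9_Glauber} to slide factors through the action, and the identities \eqref{4.12}-\eqref{4.15} which relate $1_H$-decorated tensors to genuine Sweedler components of $h$. The weak matched pair condition \ref{novo weak matched pair}(iv), which controls how $\varepsilon_s$ interacts with the action, may also be needed to bridge $\varepsilon_t(y)$ and $y$ inside the action. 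Putting these manipulations together should transform $(1_{H1}\cdot y)((1_{H2}h)\cdot\alpha)$ into an expression of the form $(k_1 \cdot y)(k_2 \cdot \alpha)$ to which $(\star)$ applies, and unwinding the substitution yields $y(h\cdot\alpha) = \varepsilon_t(y)(h\cdot\alpha)$, as desired.
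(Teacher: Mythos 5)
Your identity $(\star)$ is correct but carries essentially no information: it is just the statement $h\cdot(y\alpha)=h\cdot(\varepsilon_t(y)\alpha)$, obtained by applying $h\cdot$ to the integral relation. The entire content of the lemma lies in the bridge from $(1_{H1}\cdot y)\bigl((1_{H2}h)\cdot\alpha\bigr)$ to a configuration where the integral property of $\alpha$ can be invoked, and that is precisely the step you leave as a sketch. Moreover, the toolbox you propose for it --- the identities \eqref{4.12}--\eqref{4.15}, Lemma \ref{lema_2_1_9_Glauber}, and condition \ref{novo weak matched pair}(iv) --- cannot suffice, because none of these involve the antipode, and the statement is genuinely false without one. (For ordinary bialgebras: let $H=\Bbbk[x]$ with $x$ primitive act by a derivation $d$ on $A=\Bbbk[t]/(t^2)$ in characteristic $2$ with $d(t)=1$; then $t$ is a left integral in the relevant sense but $x\cdot t=1$ is not. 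No manipulation of unit/counit identities can circumvent this.) Your diagnosis that $1_{H1}\otimes 1_{H2}h$ is not a comultiplication is right, but there is no $k$ with $k_1\otimes k_2$ playing that role, so no amount of rewriting produces an expression to which $(\star)$ applies.

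The paper's proof uses the antipode in an essential way, following the classical pattern: write $y(h\cdot\alpha)=(\varepsilon_t(h_1)\cdot y)(h_2\cdot\alpha)$ via $h=\varepsilon_t(h_1)h_2$ and Lemma \ref{lema_2_1_9_Glauber}, use commutativity of $A$ and cocommutativity of $H$ to pass to $(h_1\cdot\alpha)(\varepsilon_t(h_2)\cdot y)$, expand $\varepsilon_t(h_2)=h_2S(h_3)$, and reassemble with (MA2) to obtain $h_1\cdot\bigl((S(h_2)\cdot y)\alpha\bigr)$. Only at this point does the integral property of $\alpha$ apply (to the bare product $(S(h_2)\cdot y)\alpha$), after which Remark \ref{vart_acao} and a reversal of the same steps finish the argument. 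If you rework your attempt, start from $\varepsilon_t(h)=h_1S(h_2)$ rather than from $\Delta(1_H)$; note also that the argument tacitly needs $H$ to be a weak Hopf algebra and the pair to be abelian, hypotheses your sketch does not use.
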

	\begin{proof} Consider $\alpha\in \int_\ell^A$ and $h\in H$, then
		\begin{eqnarray*}
			x(h\cdot\alpha)&= &  x(\varepsilon_t(h_1)h_2\cdot\alpha)\\
			&= & x(\varepsilon_t(h_1)\cdot (h_2\cdot\alpha))\\
			&\stackrel{\ref{lema_2_1_9_Glauber}}{=} & x(\varepsilon_t(h_1)\cdot 1_A)(h_2\cdot\alpha)\\
			&\stackrel{\ref{lema_2_1_9_Glauber}}{=} & (\varepsilon_t(h_1)\cdot x)(h_2\cdot\alpha)\\
			&= & (h_1\cdot\alpha)(\varepsilon_t(h_2)\cdot x)\\
			&= & (h_1\cdot\alpha)(h_2S(h_3)\cdot x)\\
			&= & (h_1\cdot\alpha)(h_2\cdot (S(h_3)\cdot x))\\
			&= & h_1\cdot ((S(h_2)\cdot x)\alpha)\\
			& = & h_1\cdot (\varepsilon_t(S(h_2)\cdot x)\alpha)\\
			&\stackrel{\ref{vart_acao}}{=} &  h_1\cdot (\varepsilon_t(S(h_2)\cdot 1_A)\varepsilon_t(x)\alpha)\\
			&= & h_1\cdot (\varepsilon_t(S(h_2)\cdot 1_A)\varepsilon_t(\varepsilon_t(x))\alpha)\\
			&= &  h_1\cdot (\varepsilon_t(S(h_2)\cdot \varepsilon_t(x))\alpha)\\
			&\stackrel{\ref{def_integral}}{=} & h_1\cdot ((S(h_2)\cdot \varepsilon_t(x))\alpha)\\
			&= & (h_1\cdot\alpha)(h_2S(h_3)\cdot \varepsilon_t(x))\\
			&= & (h_1\cdot\alpha)(\varepsilon_t(h_2)\cdot \varepsilon_t(x))\\
			&\stackrel{\ref{lema_2_1_9_Glauber}}{=} & \varepsilon_t(x) (h_1\cdot\alpha)(\varepsilon_t(h_2)\cdot 1_A)\\
			&= & \varepsilon_t(x) (\varepsilon_t(h_1)\cdot(h_2\cdot\alpha))\\
			&= & \varepsilon_t(x) (h\cdot\alpha).
		\end{eqnarray*}
		Therefore, $h\cdot\alpha\in\int_\ell^A$.
	\end{proof}
	
	For the next results, we consider the weak Hopf algebra structure in $A \due H$ as in Theorem \ref{antipoda}.
	
	\begin{pro} Let $A\due H$ be a weak Hopf algebra, and $\alpha \in \int_\ell^A$ and $t\in \int_\ell^H$ such that
		\begin{equation}\label{cond_int}
			(h\cdot\alpha)\varepsilon_s({t_1}^1(t_2\cdot 1_A))\otimes {t_1}^0=(h^0\cdot 1_A)\varepsilon_s(h^{1})\alpha\varepsilon_s({t_1}^1(t_2\cdot 1_A))\otimes {t_1}^0,
		\end{equation} for all $h\in H$. 
		Then, $\alpha\due t\in \int_\ell^{A\due H}$. 
	\end{pro}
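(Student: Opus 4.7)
The plan is to show directly that $(x\due h)(\alpha\due t)=\varepsilon_t(x\due h)(\alpha\due t)$ for arbitrary $x\in A$ and $h\in H$, which by Definition \ref{def_integral} is exactly the condition for $\alpha\due t$ to be a left integral in $A\due H$. To unfold the left-hand side I will apply the multiplication formula of Theorem \ref{principal} to obtain
\[
(x\due h)(\alpha\due t)=x(h_1\cdot\alpha)\due h_2 t,
\]
and then inject the two integral hypotheses. The hypothesis $t\in\int_\ell^H$ immediately gives $h_2 t=\varepsilon_t(h_2)t$. For the $A$-factor, I will invoke the preceding lemma to conclude that $h_1\cdot\alpha\in\int_\ell^A$; thus the integral property in $A$ yields $x(h_1\cdot\alpha)=\varepsilon_t(x)(h_1\cdot\alpha)$. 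At this point the left-hand side has been reduced to $\varepsilon_t(x)(h_1\cdot\alpha)\due\varepsilon_t(h_2)t$.

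Next I will open the projection $\due$ using Remark \ref{obs_ordem}, so that $\alpha\due t$ becomes an explicit expression involving $\varepsilon_s(t_1^1(t_2\cdot 1_A))$ and $t_1^0$, and $\varepsilon_t(x\due h)$ is expanded by the formula computed inside the proof of Theorem \ref{principal}, namely $\varepsilon_t(x\due h)=\varepsilon_t(x)\varepsilon_t(h^1)\varepsilon_s(1_{H2}{}^1)\varepsilon_t(1_{H1}{}^0 h^0\cdot 1_A)\,1_{H1}{}^1\underline{\#}\,1_{H2}{}^0$. Multiplying this by $\alpha\due t$ (again via Theorem \ref{principal}) produces an expression in which the factor $\varepsilon_s(t_1^1(t_2\cdot 1_A))\otimes t_1^0$ isolates the data appearing in condition \eqref{cond_int}.

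The crucial step is then to apply \eqref{cond_int}: in the expression coming from $(x\due h)(\alpha\due t)$, the piece $(h_1\cdot\alpha)\varepsilon_s(t_1^1(t_2\cdot 1_A))\otimes t_1^0$ is precisely the left-hand side of \eqref{cond_int} (applied to $h_1$), so I can replace it by $(h_1{}^0\cdot 1_A)\varepsilon_s(h_1{}^1)\alpha\varepsilon_s(t_1^1(t_2\cdot 1_A))\otimes t_1^0$. This substitution is what separates the $h$-action on $\alpha$ from $\alpha$ itself, and after routine manipulation with the identities in Proposition 2.2 and Lemma \ref{lema_basic} (in particular equations \eqref{4.12}--\eqref{4.15} and \eqref{4.20}) the expression collapses exactly onto $\varepsilon_t(x\due h)(\alpha\due t)$, closing the argument.

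The main obstacle will be bookkeeping: there are several simultaneous projections (one from $x\due h$, one from $\alpha\due t$, and the $\underline{\#}/\overline{\#}$ asymmetry from $\varepsilon_t$) together with coproducts of $h$, of $t$, and coactions of $H$ on $A$, and I must insert condition \eqref{cond_int} at the unique position where the shape $(h_1\cdot\alpha)\varepsilon_s(t_1^1(t_2\cdot 1_A))\otimes t_1^0$ is already available without further rewriting. The hypothesis \eqref{cond_int} is manifestly tailored so that this identification is possible, so once the projections are unfolded in the right order the computation reduces to repeated applications of \eqref{4.6}, Remark \ref{vart_acao}, and Lemma \ref{lema_2_1_9_Glauber}.
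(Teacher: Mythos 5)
Your overall strategy --- direct verification of $(x\due h)(\alpha\due t)=\varepsilon_t(x\due h)(\alpha\due t)$, reduction to $\varepsilon_t(x)(h_1\cdot\alpha)\due\varepsilon_t(h_2)t$ via the preceding lemma and the integral property of $t$, then insertion of \eqref{cond_int} --- matches the paper's. But there is a concrete gap exactly at the point you yourself flag as delicate. After your reduction, the $H$-slot of the projection contains $\varepsilon_t(h_2)t$, not $t$. Opening the projection via Remark \ref{obs_ordem} therefore produces $\varepsilon_s\left((\varepsilon_t(h_2)t)_1{}^1((\varepsilon_t(h_2)t)_2\cdot 1_A)\right)\otimes(\varepsilon_t(h_2)t)_1{}^0$, where the coproduct and the coaction are applied to the product $\varepsilon_t(h_2)t$; the shape $(h_1\cdot\alpha)\varepsilon_s({t_1}^1(t_2\cdot 1_A))\otimes {t_1}^0$ demanded by \eqref{cond_int} is \emph{not} ``already available without further rewriting.'' The missing idea is the paper's intermediate consolidation: use \eqref{4.12} to rewrite $h_1\otimes\varepsilon_t(h_2)$ as $1_{H1}h\otimes 1_{H2}$, then use Lemma \ref{lema_2_1_9_Glauber} to turn $1_{H1}\cdot(h\cdot\alpha)$ into $(h\cdot\alpha)(1_{H1}\cdot 1_A)$ and absorb $(1_{H1}\cdot 1_A)\due 1_{H2}t$ back into $\due\, t$. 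This collapses the expression to $\varepsilon_t(x)(h\cdot\alpha)\due t$, with a bare $t$ in the $H$-slot and the \emph{whole} $h$ acting on $\alpha$; only then does unfolding the projection give exactly the left-hand side of \eqref{cond_int}, applied to $h$ rather than to $h_1$.

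This also matters for the final reassembly. The paper applies \eqref{cond_int} to the full $h$ and then matches the result against $\varepsilon_t(x\due h)=\varepsilon_t(x)S(h^1)(1_{H1}h^0\cdot 1_A)\due 1_{H2}$ --- an intermediate formula from the proof of Theorem \ref{antipoda} (not of Theorem \ref{principal}, as you state) --- using \eqref{4.34}, \eqref{4.7} and Lemma \ref{lema_2_1_9_Glauber} once more. Your plan of applying \eqref{cond_int} to $h_1$ while $h_2$ survives elsewhere is linear-algebraically legitimate, since \eqref{cond_int} is an identity of linear maps in $h$, but it leaves you with expressions in ${h_1}^0,{h_1}^1,h_2$ that must then be recombined with the $h^0,h^1$ appearing in $\varepsilon_t(x\due h)$ via the comodule-coalgebra axioms --- extra work your sketch does not account for. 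Without the \eqref{4.12}/Lemma \ref{lema_2_1_9_Glauber} step, the argument as written does not go through.
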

	\begin{proof} We need to check that $\left( x \due h \right)\left( \alpha \due t \right)=\varepsilon_t\left( x \due h \right)\left( \alpha \due t \right)$, for all $x\in A$ and $h\in H$. Indeed, 
		\begin{eqnarray*}
			\left( x \due h \right)\left( \alpha \due t \right) &= &  x(h_1\cdot \alpha) \due h_2t\\
			&\stackrel{\ref{vart_acao}}{=} & \varepsilon_t(x)(h_1\cdot \alpha) \due \varepsilon_t(h_2)t\\
			&\stackrel{(\ref{4.12})}{=} &  \varepsilon_t(x)(1_{H1}h\cdot \alpha) \due 1_{H2}t \\
			&= & \varepsilon_t(x)(1_{H1}\cdot (h\cdot \alpha)) \due 1_{H2}t\\
			&\stackrel{\ref{lema_2_1_9_Glauber}}{=} & \varepsilon_t(x)(h\cdot\alpha)(1_{H1}\cdot 1_A) \due 1_{H2}t\\
			&= &  \varepsilon_t(x)(h\cdot\alpha) \due t\\
			&= &  \varepsilon_t(x)(h\cdot\alpha)\varepsilon_s({t_1}^1(t_2\cdot 1_A))\underline{\#}{t_1}^0\\
			&\stackrel{(\ref{cond_int})}{=} & \varepsilon_t(x)\varepsilon_s(h^{1})(h^{0}\cdot 1_A)\alpha\varepsilon_s({t_1}^1(t_2\cdot 1_A))\underline{\#}{t_1}^0\\
			&\stackrel{(\ref{4.34})}{=} & \varepsilon_t(x)\varepsilon_t(S(h^{1}))(h^{0}\cdot 1_A)\alpha\due t\\
			&\stackrel{\eqref{4.7}}{=} & \varepsilon_t(x)S(h^{1})(h^{0}\cdot 1_A)\alpha( 1_{H1}\cdot 1_A)\due \varepsilon_t(1_{H2})t\\
			&\stackrel{\ref{lema_2_1_9_Glauber}}{=} & \varepsilon_t(x)S(h^{1})( 1_{H1}\cdot (h^{0}\cdot 1_A)\alpha)\due \varepsilon_t(1_{H2})t\\
			&= & \varepsilon_t(x)S(h^{1})( 1_{H1}h^{0}\cdot 1_A)(1_{H2}\cdot\alpha)\due 1_{H3}t\\
			&= & \left(\varepsilon_t(x)S(h^{1})( 1_{H1}h^{0}\cdot 1_A) \due 1_{H2}\right)\left( \alpha \due t \right)\\
			&= & \varepsilon_t\left( x \due h \right)\left( \alpha \due t \right).
		\end{eqnarray*}
	\end{proof}
	
	\begin{cor}
		Let $A\due H$ be given by Theorem \ref{antipoda}. Then, $A\due H$ is semisimple if and only if there exists $\alpha\in\int_\ell^{A}$ and $t\in\int_\ell^{H}$ such that $$\varepsilon_t(\alpha)S(t^1)(t^0\cdot 1_A)\due 1_H=1_A\due 1_H.$$
	\end{cor}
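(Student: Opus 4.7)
The strategy is to combine Proposition \ref{pro_semisimple}, which characterizes semisimplicity of the finite-dimensional weak Hopf algebra $A \due H$ by the existence of a normalized left integral $\beta \in \int_\ell^{A \due H}$ satisfying $\varepsilon_t(\beta) = 1_A \due 1_H$, with the preceding proposition, which produces such integrals in the split form $\alpha \due t$ whenever $\alpha \in \int_\ell^A$ and $t \in \int_\ell^H$ satisfy condition \eqref{cond_int}. The bridge between these two facts will be an explicit evaluation of $\varepsilon_t(\alpha \due t)$.

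The key calculation is to show that
\[
\varepsilon_t(\alpha \due t) \;=\; \varepsilon_t(\alpha)\,S(t^1)\,(t^0 \cdot 1_A) \due 1_H.
\]
I would unfold the definition $\varepsilon_t(\alpha \due t) = \varepsilon\bigl((1_A \due 1_H)_1 \,(\alpha \due t)\bigr)(1_A \due 1_H)_2$ using the coproduct $\Delta(1_A \due 1_H) = 1_{A1} \due {1_{H1}}^0 \otimes 1_{A2} {1_{H1}}^1 \due 1_{H2}$, the multiplication $(x \due h)(y \due g) = x(h_1 \cdot y) \due h_2 g$ and the counit $\varepsilon(x \due h) = \varepsilon\bigl(x({h_1}^0\cdot 1_A){h_1}^1(h_2\cdot 1_A)\bigr)$ from Theorem \ref{principal}. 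Applying the integral identities $x\alpha = \varepsilon_t(x)\alpha$ and $ht = \varepsilon_t(h)t$, together with \eqref{4.12}, \eqref{4.34}, Lemma \ref{lema_2_1_9_Glauber} and Lemma \ref{3traingulos}, should collapse the expression to the displayed right-hand side, in close parallel to the manipulations appearing in the proof of the preceding proposition and in Theorem \ref{antipoda}.

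Granted this identity, the backward direction is immediate: given $\alpha$ and $t$ as hypothesized, condition \eqref{cond_int} (which one checks is compatible with the stated normalization, or inherits as a standing assumption from the preceding proposition) ensures $\alpha \due t \in \int_\ell^{A\due H}$, and the key calculation together with the hypothesis yields $\varepsilon_t(\alpha \due t) = 1_A \due 1_H$, so Proposition \ref{pro_semisimple} delivers semisimplicity. For the forward direction, semisimplicity supplies some $\beta \in \int_\ell^{A\due H}$ with $\varepsilon_t(\beta) = 1_A \due 1_H$; I would exhibit $\beta$ in the split form $\alpha \due t$ by lifting integrals of $A$ and $H$ separately through the matched pair, after which the key calculation reads the stated equation off from the normalization. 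The main obstacle I anticipate is precisely this factorization step: realizing a given normalized integral of $A \due H$ as a product $\alpha \due t$ with $\alpha$ and $t$ integrals of the factors is not formal and will require exploiting the explicit action, coaction and antipode of $A \due H$, invoking Lemma \ref{lema_2_1_9_Glauber} to realign the action with the integrals and transporting the normalization through the compatibility conditions of the matched pair.
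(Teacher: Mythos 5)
Your identification of the ingredients is right, and your ``key calculation'' is not only plausible but already done in the paper: the chain of equalities verifying \eqref{antipoda_1} in the proof of Theorem \ref{antipoda} (and the last two lines of the proof of the proposition preceding the corollary) establish $\varepsilon_t(x\due h)=\varepsilon_t(x)S_A(h^1)(1_{H1}h^0\cdot 1_A)\due 1_{H2}$, which reduces to the displayed expression $\varepsilon_t(\alpha)S(t^1)(t^0\cdot 1_A)\due 1_H$ by the module-algebra axioms. So the bridge you build between Proposition \ref{pro_semisimple} and the preceding proposition is sound, and this part requires no new work.

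The gaps are in both implications. For ``$\Leftarrow$'': Proposition \ref{pro_semisimple} needs $\alpha\due t$ to actually be a left integral of $A\due H$, and the only mechanism available for that is the preceding proposition, which requires condition \eqref{cond_int}; that condition is not among the corollary's hypotheses and does not follow from $\alpha\in\int_\ell^A$, $t\in\int_\ell^H$ together with the stated normalization. Saying ``one checks it is compatible'' or that it is ``inherited as a standing assumption'' is not an argument --- you must either verify \eqref{cond_int} or carry it explicitly as a hypothesis. For ``$\Rightarrow$'': you propose to take a normalized integral $\beta\in\int_\ell^{A\due H}$ and factor it as $\alpha\due t$ with $\alpha$ and $t$ integrals of the factors, and you concede yourself that you cannot carry this out. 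This is the substantive half of the corollary and it is simply absent: a general element of $A\due H$ is a sum of elementary tensors, there is no reason a normalized integral should be of split form, and even for a split integral the tensor factors need not be integrals of $A$ and of $H$. Some explicit construction of $\alpha$ and $t$ from $\beta$ (for instance via suitable projections of $A\due H$ onto its factors, followed by a verification of the integral property and of the normalization) is required, and nothing in the proposal supplies it. As written, the forward implication is unproved and the backward implication rests on an unverified hypothesis.
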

	
	\begin{ex} Consider the (weak) Hopf algebra $A\due H=Az\otimes H_{\lambda}$ of the Example \ref{ex_smashlambdaz}. Note that if $\alpha \in \int_\ell^A$ and $t\in \int_\ell^H$, then $\alpha\due t=\alpha z\# \lambda(t_1)t_2\in \int_\ell^{A\due H}$. Indeed, 
		\begin{eqnarray*}
			(h\cdot\alpha)\varepsilon_s({t_1}^1(t_2\cdot 1_A))\otimes {t_1}^0 &= & \lambda(h)\alpha \varepsilon_s(z)\otimes\lambda(t_2)t_1\\
			&= &  (h^0\cdot 1_A)\varepsilon_s(h^{1})\alpha\varepsilon_s({t_1}^1(t_2\cdot 1_A))\otimes {t_1}^0,
		\end{eqnarray*}
		that is, the condition (\ref{cond_int}) is satisfied. Moreover, $$\varepsilon_t(z\alpha \# \lambda(t_1)t_2)=\varepsilon(\alpha z \# \lambda(t_1)t_2)(1_{Az}\due 1_{H_{\lambda}})=\varepsilon(\alpha z)\lambda(t) (1_{Az}\due 1_{H_{\lambda}}).$$
		Therefore, $Az\otimes H_{\lambda}$ is semisimple if only if $Az$ and $H_{\lambda}$ are semisimple. Note that, in this case, there exists $t\in \int_\ell^H$ such that $\lambda(t)\neq 0$, reobtaining \cite{matchedpair}*{Corollary 4.11}.
	\end{ex}


\begin{thebibliography}{99}
		\bibitem{Agore} A.-L. Agore, G. Militaru, \emph{Bicrossed Products, Matched Pair Deformations and the Factorization Index for Lie Algebras}, Symmetry, Integrability and Geometry: Methods and Applications SIGMA 10 (2014), 065, 16 pages.
		
		\bibitem{Nicolas_intro} N. Andruskiewitsch, S. Natale, \emph{Braided {H}opf algebras arising from matched pairs of groups}, Journal of Pure and Applied Algebra 182 (2) (2003), 119-149.
		
		\bibitem{matchedpair} D. Azevedo, G. Martini, A. Paques, L. Silva, \emph{{H}opf algebras arising from partial (co)actions},
		Journal of Algebra and Its Applications 20 (01) (2021), 2140006.
		
		\bibitem{Batista_why} E. Batista, \emph{Partial actions: what they are and why we care}, Bulletin of the Belgian Mathematical Society Simon Stevin 24 (2017) 35-71.
		
		\bibitem{Gabintegral} G. Bohm, N. Florian, S. Korn\'el, \emph{Weak Hopf algebras: I. Integral theory and c-structure}, Journal of Algebra 221 (2) (1999), 385-438.
		
		\bibitem{Bohmexemplo} G. Bohm, J. G\'omes-Torrecillas, \emph{On the Double Crossed Product of Weak Hopf Algebras}, Contemporary Mathematics 585 (2013), 153-174.
		
		\bibitem{Caenepeel_} S. Caenepeel, E. De Groot, \emph{Modules over Weak Entwining Structures}, Contemporary Mathematics 267 (2000), 31-54. 
		
		\bibitem{Felipe} F. Castro, A. Paques, G. Quadros, A. Sant'Ana, \emph{Partial actions of weak Hopf algebras: smash products, globalization and Morita theory}, Journal of Pure and Applied Algebra 29 (2015), 5511-5538.
		
		\bibitem{Kaplansky} Z. Chebel, A. Makhlouf. \emph{Kaplansky's Type Constructions for Weak Bialgebras and Weak Hopf Algebras}, J. Gen. Lie Theory Appl. 9 (S1) (2015), 1-9.
		
		\bibitem{weak_smash_coproduct} G. Fonseca, E. Fontes, G. Martini, \emph{A (partial) weak smash coproduct}, S\~ao Paulo Journal of Mathematical Sciences 17 (2023), 555-594.
		
		\bibitem{Majid} S. Majid, \emph{Foundations of quantum group theory}, Cambridge University Press, 1995.
		
		\bibitem{muller} M. Muller, H. M. P. Pollastri, J. Plavnik, \emph{On bicrossed product of fusion categories and exact factorizations}, (2024). Arxiv Preprint 2405.10207. https://arxiv.org/abs/2405.10207
		
		\bibitem{Sonia_Natale} S. Natale, \emph{Crossed actions of matched pairs of groups on tensor categories}, Tohoku Mathematical Journal 68 (3) (2016), 377-405.
		
		\bibitem{Ricardo} R. L. D. Santos, \emph{Extens\~oes de Ore e Algebras de Hopf Fracas}, PhD Thesis, UFRGS, Porto Alegre (2017).
		
		\bibitem{Singer} W. Singer, \emph{Extension theory for connected Hopf algebras}, Journal of Algebra 21 (1972), 1-16.
		
		
		\bibitem{Takeuchi}  M. Takeuchi, \emph{Matched pairs of groups and bismash products of Hopf algebras}, Communications in Algebra 9 (1981), 841-882.
		
		\bibitem{zhang} T. Zhang, L. Zhang, \emph{Extending structures for Zinbiel algebras}, (2023). Arxiv Preprint 2203.15692. https://arxiv.org/abs/2203.15692
		
		\bibitem{Wang} Yu. Wang, L. Yu. Zhang, \emph{The Structure Theorem for Weak Module Coalgebras}, Mathematical Notes 88 (1) (2010), 3-17.
	\end{thebibliography}
\end{document}